\newcommand\cyr{%
\renewcommand\rmdefault{wncyr}%
\renewcommand\sfdefault{wncyss}%
\renewcommand\encodingdefault{OT2}%
\normalfont
\selectfont}
\DeclareTextFontCommand{\textcyr}{\cyr}
\numberwithin{equation}{section}
\newtheorem{theorem}{Theorem}[section]
\newtheorem{cor}[theorem]{Corollary}
\newtheorem{lemma}[theorem]{Lemma}
\newtheorem{prop}[theorem]{Proposition}
\theoremstyle{definition}
\newtheorem{defi}{Definition}[section]
\newtheorem{rem}[defi]{Remark}
\newcommand\half{\frac{1}{2}}
\newcommand\ov{\overline}
\renewcommand\o{\varpi}
\newcommand\rhat{\widehat\rho}
\newcommand\be{\beta}
\newcommand\ka{\widehat \k}
\newcommand\w{\wedge}
\newcommand\g{\mathfrak g}
\newcommand\h{\mathfrak h}
\newcommand\ha{\widehat{\mathfrak h}}
\newcommand\n{\mathfrak n}
\newcommand\bb{\mathfrak b}
\newcommand\D{\Delta}
\renewcommand\l{\lambda}
\newcommand\Dp{\Delta^+}
\newcommand\Da{\widehat\Delta}
\newcommand\Pia{\widehat\Pi}
\newcommand\Dap{\widehat\Delta^+}
\newcommand\Wa{\widehat{W}}
\renewcommand\d{\delta}
\renewcommand\t{\otimes}
\renewcommand\a{\alpha}
\renewcommand\aa{\mathfrak a}
\renewcommand\b{\bb}
\renewcommand\k{\mathfrak k}
\newcommand\nat{\mathbb N}
\newcommand\ganz{\mathbb Z}
\newcommand\s{\sigma}
\renewcommand\L{\Lambda}
\renewcommand\aa{\mathfrak a}
\newcommand\C{\mathbb C}
\newcommand\R{\mathbb R}
\newcommand\si{\sigma}
\newcommand\What{\widehat W}
\renewcommand\ha{\widehat{\mathfrak h}}
\newcommand{\fa}{\mathfrak{a}}
\newcommand{\fg}{\mathfrak{g}}
\newcommand{\fh}{\mathfrak{h}}
\newcommand\p{\mathfrak p}
\newcommand{\fp}{\mathfrak{p}}
\newcommand{\rank}{{\rm rank}}
\renewcommand{\k}{\mathfrak{k}}
\newcommand{\asdim}{\text{\rm asdim}}
\newcommand{\comm}{{\rm comm}}
\begin{document}
\title{On the kernel of the affine Dirac operator}
\author{Victor~G. Kac\\ Pierluigi M\"oseneder Frajria\\ Paolo  Papi}

\maketitle
\centerline{\it To Ernest Borisovich Vinberg, on the occasion of his 70$^{th}$ birthday}
\vskip10pt

\begin{abstract}
Let $\fg$ be a finite-dimensional semisimple Lie algebra and $(\cdot\,,\cdot)$ its Killing form,  
$\sigma$ an elliptic automorphism of $\fg$, and $\fa$ a
$\sigma$-invariant reductive subalgebra of $\fg$, such that the restriction of
the form $(\cdot\,,\cdot)$ to $\fa$ is non-degenerate.  Let $\widehat{L} (\fg
,\sigma)$ and $\widehat{L}(\fa ,\sigma)$ be the associated twisted affine Lie
algebras and $F^\sigma (\fp)$ the $\sigma$-twisted Clifford
module over $\widehat{L} (\fa ,\sigma)$, associated to the
orthocomplement $\fp$ of $\fa$ in $\fg$.  Under suitable hypotheses on
$\sigma$ and $\fa$, we provide a general formula for the
decomposition of the kernel of the affine Dirac operator, acting
on the tensor product of an integrable highest weight $\widehat{L} (\fg
,\sigma)$-module and $F^\sigma (\fp)$, into irreducible $\widehat{L}
(\fa, \sigma)$-submodules.

As an application, we derive the decomposition of all level~1
integrable irreducible highest weight modules over orthogonal
affine Lie algebras with respect to the affinization of the
isotropy subalgebra of an arbitrary symmetric space.
\vskip 20pt

\end{abstract}

\section{Introduction}Let $\g$ be a finite-dimensional semisimple Lie 
algebra and
$(\cdot\,,\cdot)$ its Killing form,  
let $\sigma$ be an  elliptic automorphisms of $\g$ (i.e. $\sigma$ is 
semisimple 
with modulus one eigenvalues).
%We assume furthermore that $\s\mu=\mu\s$.
%, stabilizing a non-degenerate invariant bilinear 
%form $(\cdot\,,\cdot)$.  
Let $\aa$ be a $\si$-invariant reductive subalgebra of $\g$ such 
that $(\cdot\,,\cdot)$ is still 
nondegenerate when restricted to $\aa$. Let 
$\g^{\ov 0}$ 
and $\aa^{\ov 0}$ be the fixed point sets of 
$\s$ and $\s_{|\aa}$ respectively. Then we can choose   
a $\s$-invariant Cartan
subalgebra $\fh$ of $\fg$, such that $\fh_0 =\fh \cap \g^{\ov 0}$ is a 
%$\mu$-stable 
Cartan subalgebra of $\g^{\ov 0}$.   
The aim of this paper is to establish a 
formula describing the kernel of the Kac-Todorov affine Dirac operator, 
provided that there exists an elliptic automorphism $\mu$ of $\g$, commuting 
with $\s$, such that $\h_0$ is the centralizer of a Cartan subalgebra $\h_0^\mu$
in the algebra $(\g^{\ov 0})^\mu$, the fixed point set of $\mu$ in $\g^{\ov 0}$, and 
$\h_0^\mu$  is a Cartan subalgebra of $\aa^{\ov 0}$.
 This formula is a 
generalization of the formula proved in  \cite[Theorem 5.4]{KMP}, where it was 
assumed that the rank of $\aa^{\ov 0}$ equals the rank of $\g^{\ov 0}$.
%, while in the present work the rank of $\aa^{\ov 0}$ can be strictly 
%smaller than the rank of $\g^{\ov 0}$. 

To state the result precisely, 
let $L(\L)$ be a integrable  irreducible highest weight module over  the 
twisted affine Lie algebra $\widehat L(\g,\s)$, and let $F^{\si}(\p)$ be the 
$\sigma$-twisted Clifford module (see \eqref{clifford}), $\p$ being the 
orthocomplement of $\aa$ in $\g$. Let $D$ be the Kac-Todorov affine Dirac 
operator, which we regard as an operator on  $X=L(\L)\otimes F^{\si}(\p)$. 
%$\k_\eta$ 
 The main result of the present paper is the following:
\begin{theorem}\label{multiplet}  Assume that $(\L+\rhat_\si)_{|\h_0\cap\p}=0$.
Then the following decomposition into a direct sum of irreducible 
$\widehat L(\aa,\si)$-modules holds:
\begin{equation}\label{kerd} Ker\,(D)=2^{\lfloor \frac{\rank(\g^{\ov 0}))-
\rank(\aa^{\ov 0})+1}{2}\rfloor}\sum_{w\in \Wa'} V(\varphi_{\aa}^*
(w(\L+\rhat_\si))-\rhat_{\aa\,\si}).\end{equation}
Here  $\varphi_{\aa}^*,\,\rhat_\s,\rhat_{\aa\,\s}$,\, and $\Wa'$ are as 
defined in \eqref{vp}, \eqref{rhoaff}, and \eqref{mcr}  respectively.\par

\end{theorem}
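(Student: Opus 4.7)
The plan is to reduce to the equal-rank case \cite[Theorem 5.4]{KMP} via a two-step Dirac reduction using the auxiliary automorphism $\mu$. Set $\k := \g^\mu$: this is a $\si$-invariant reductive subalgebra of $\g$ (since $\mu$ commutes with $\si$), and $\k^{\ov 0}=(\g^{\ov 0})^\mu$ admits $\h_0$ as a Cartan by hypothesis, so the pair $(\g,\k)$ is equal-rank. One checks that $\aa\subseteq\k$ and that $\h_0\cap\aa=\h_0^\mu$, whence the orthocomplement $\p$ of $\aa$ in $\g$ decomposes as $\p = \p_{\g/\k}\oplus\p_{\k/\aa}$, giving a tensor factorization of Clifford modules $F^\si(\p)\cong F^\si(\p_{\g/\k})\otimes F^\si(\p_{\k/\aa})$. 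The second factor further contains $F^\si(\h_0\cap\p)$, the Clifford module on the abelian Cartan complement of dimension $\rank(\g^{\ov 0})-\rank(\aa^{\ov 0})$.

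For the first step I would invoke \cite[Theorem 5.4]{KMP} for the equal-rank pair $(\g,\k)$ to decompose the kernel of the Dirac operator $D_{\g/\k}$ on $L(\L)\otimes F^\si(\p_{\g/\k})$ into irreducible $\widehat L(\k,\si)$-modules. The pair $(\k,\aa)$ is not equal-rank, but the rank deficit lies entirely in the abelian direction $\h_0\cap\p$, and the hypothesis $(\L+\rhat_\si)_{|\h_0\cap\p}=0$ propagates via $\varphi_\k^*$ to an analogous vanishing for each irreducible $\widehat L(\k,\si)$-summand of the first kernel. This forces the contribution of $F^\si(\h_0\cap\p)$ to the kernel of $D_{\k/\aa}$ to be the full spinor module of dimension $2^{\lfloor(\rank(\g^{\ov 0})-\rank(\aa^{\ov 0})+1)/2\rfloor}$, appearing multiplicatively in each $\widehat L(\aa,\si)$-isotypic component. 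After factoring out this spinor piece, the residual operator is again an equal-rank Dirac reduction, handled by a second application of \cite[Theorem 5.4]{KMP}.

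The main obstacle is twofold. First, the Kac-Todorov Dirac operator $D$ must be split cleanly under the tensor factorization of $F^\si(\p)$ into pieces adapted to the subalgebras $\k$ and $\aa$; controlling the cubic Clifford term of $D$ and its supercommutators with the generators of the abelian Cartan complement is where the hypothesis $(\L+\rhat_\si)_{|\h_0\cap\p}=0$ enters decisively, since only on weights orthogonal to $\h_0\cap\p$ does the Cartan-complement factor of $D$ annihilate highest-weight vectors. Second, one must verify that iterating the equal-rank formula produces exactly the coset set $\Wa'$ defined in \eqref{mcr}: this requires showing that $\varphi_\aa^*$ factors compatibly through $\varphi_\k^*$ and that the resulting Weyl-group representatives (arising from the two equal-rank reductions) compose into precisely $\Wa'$ with no double-counting or omission.
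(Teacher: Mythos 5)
Your strategy — a two-step Dirac reduction through the intermediate subalgebra $\k=\g^\mu$ — is not the paper's approach, and as written it collapses at the first step. You assert that ``$\k^{\ov 0}=(\g^{\ov 0})^\mu$ admits $\h_0$ as a Cartan by hypothesis, so the pair $(\g,\k)$ is equal-rank.'' This is false. The standing hypothesis is that $\h_0^\mu$ is a Cartan subalgebra of $(\g^{\ov 0})^\mu$ and that $\h_0$ is the centralizer of $\h_0^\mu$ in $\g^{\ov 0}$; since $\h_0$ is not contained in $\g^\mu$ unless $\mu$ acts trivially on $\h_0$, one has $\rank((\g^{\ov 0})^\mu)=\dim\h_0^\mu<\dim\h_0=\rank(\g^{\ov 0})$ precisely in the non-equal-rank case that the theorem is meant to handle. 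Thus $(\g,\k)$ is the pair with the rank deficit, and \cite[Theorem 5.4]{KMP} cannot be invoked for it. Correspondingly, the Cartan complement $\h_\p=\h_0\cap\p$ lies in $\p_{\g/\k}$ (since $\h_\p\cap\g^\mu=0$), not in $\p_{\k/\aa}$ as you claim, so your proposed factorization of the spinor module is also misattributed. Finally, you write ``one checks that $\aa\subseteq\k$,'' but this is not a hypothesis of the theorem: the setup only requires $\aa$ to be a $\si$-invariant reductive subalgebra with $\h_0^\mu$ a Cartan of $\aa^{\ov 0}$, and $\aa\subseteq\g^\mu$ need not hold.

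The paper's proof is direct and does not iterate the equal-rank theorem. The key technical input is Proposition~\ref{newkumar}, the non-equal-rank replacement for Kumar's Lemma~3.2.4: any weight $\nu$ of $X$ with $\Vert\nu+\rhat_{\aa\si}\Vert=\Vert\L+\rhat_\si\Vert$ satisfies $w(\L+\rhat_\si)=(\varphi_\aa^*)^{-1}(\nu+\rhat_{\aa\si})$ for some $w\in\Wa_\si$. Corollary~\ref{fund} then forces $w\in\Wa(\mu)$, and the Weyl-group apparatus of Section~4 (Proposition~\ref{Wmu} identifying $\Wa(\mu)\cong\widehat W_\comm$, Corollary~\ref{coxeter}, Lemma~\ref{subgroup}) shows $\tilde w\in\Wa'$. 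The $2^{\lfloor(\rank(\g^{\ov 0})-\rank(\aa^{\ov 0})+1)/2\rfloor}$ factor then comes from an explicit construction: one exhibits that many vectors $x\otimes y_i$ of the prescribed weight, where the $y_i$ range over pure spinors built from the Cartan-complement directions $v_{0,j}$ (the $\h_\p$-part of $F^\si(\p)$) together with the $N(w)$-directions; Lemma~\ref{lemmaKumar} shows there are no others, and the formula~\eqref{azionedirac} for $D^2$ forces them all to be highest weight vectors. Any attempt along your lines would need to prove a decomposition for the non-equal-rank pair $(\g,\g^\mu)$ as a first step, which is exactly the kind of statement the theorem is establishing and would be circular.
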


Theorem \ref{multiplet} encompasses a long series of results which have their 
roots in the finite dimensional theory, as we presently explain. \par 
In his seminal paper \cite{P}, Parthasarathy pointed out a remarkable 
connection between the Atiyah-Bott  Dirac operator and the discrete series for 
a real semisimple group $G$ with an equal rank maximal compact subgroup $K$. 
His results can be recast in the following algebraic setting. Let $\g_0$ be 
the Lie algebra of $G$ and  $\g=\k\oplus\p$ be the (complexified) Cartan 
decomposition for the complexification $\g$ of $\g_0$. 
 Let $\h$ be a common Cartan subalgebra for $\g$ and $\k$. Fix a positive 
system $\Dp$ for the set of roots $\D_{\fg}$ of $(\fg, \fh)$ and let
%induce it on the roots of  $(\k,\h)$ by setting 
$\Dp_\k=\D_\k\cap \Dp$. Let $\rho,\rho_\k$ be the corresponding half sums 
of positive roots, and let 
$W,W_\k$ be the Weyl groups of $(\g,\h),\,(\k,\h)$, respectively. Denote 
finally by $W'$ the set of minimal right coset representatives of $W_\k$ in 
$W$.\par
 Parthasarathy proved that the spin representation $S$ of $\p$ decomposes, 
as a $\k$-module, as follows:
\begin{equation}\label{dspin} S=\bigoplus_{w\in W'}V(w(\rho)-\rho_\k)
\end{equation}
(cf . \cite[Lemma 2.2]{P}), and used this decomposition to calculate the 
kernel of the  Dirac operator. In turn he derived from this description an 
explicit realization of the discrete series \cite[Theorem 3]{P}. \par
Later on, Kostant realized that, upon a suitable modification of the Dirac 
operator, one could decompose directly its kernel into
collections of representations which he named {\it multiplets} 
(see the r.h.s. in formula \eqref{dec} below).  
At the same time he generalized the decomposition to the non-symmetric case, 
changing the setting as follows. Let $\aa$ be an equal rank  reductive 
subalgebra  of $\g$. 
%such that the restriction of the Killing form $B$ of $\g$ to $\aa$ is  nondegenerate. 
Consider the Dirac operator with cubic term
$${\not\!\partial}_{\g/\aa}=\sum_{i=1}^{\dim(\p)}z_i\otimes z_i+1\otimes v\in 
U(\g)\otimes Cl(\p),$$
where $\p$ is the orthocomplement of $\aa$ in $\g$, $\{z_i\}$ is an
% $B$-
orthonormal basis of $\p$  and $v$ is the image in the Clifford algebra 
$Cl(\p)$ of $\p$ of the fundamental
3-form $\omega\in \w^3(\p^*),$ $\omega(X,Y,Z) =  (X,[Y,Z])$ under the 
%Chevalley 
skewsymmetrisation map. 
%(which is the composition of the skewsymmetrisation map
%from the exterior algebra to the tensor algebra with the canonical projection 
%of the latter to the Clifford algebra0. 
This Dirac operator acts naturally on the $\aa$-module $L(\L)\otimes S$, where
$L(\L)$ is an irreducible finite dimensional highest weight $\g$-module and 
$S$ is, as above, the space of spinors for $\p$. Kostant proved in 
\cite{Kold} that 
 $Ker({\not\!\partial}_{\g/\aa})$  admits the following decomposition into 
irreducible $\aa$-modules
\begin{equation}\label{dec} Ker({\not\!\partial}_{\g/\aa})=
\bigoplus_{w\in W'}V(w(\L+\rho)-\rho_\aa).\end{equation}
(Notation is as above with $\aa$ in place of $\k$). 
Since in the symmetric case the kernel of the Dirac operator is the whole spin 
module $S$, putting $\L=0$ in \eqref{dec} one recovers formula \eqref{dspin}.
\par
 Decomposition formulas for the spin module $S$ into irreducible $\k$-modules, when $\k$ is the fixed point set of 
 an involution (but not necessarily of the same rank as $\g$) have been found
 in \cite{Wallach}, \cite{pan}, \cite{Han2}. The formula given in \cite[Lemma 9.3.2]{Wallach} is 
 \begin{equation}\label{W}
 S=2^{\lfloor\frac{\rank(\g)-\rank(\k)+1}{2} \rfloor}\sum_{P\in C(P_\k)} V(\rho_P-\rho_\k).\end{equation}
 Here $C(P_\k)$ is the set of positive subsets of $\D_\fg$, compatible with 
$\Dp_\k$. This means that  $P\in C(P_\k)$ if \begin{enumerate}
 \item $P$ is stable w.r.t the Cartan involution;
 \item
 if $\a\in \Dp_\k$ then there exists $\beta\in P$ such that the restriction of $\beta$ to a fixed Cartan subalgebra of $\k$ equals $\a$.
 \end{enumerate}
 Formula \eqref{W} does not show explicitly the emergence of multiplets. The obvious difficulty is that $W_\k$ is not naturally a subgroup of $W$.
 This problem has been overcome in  \cite{pan} with a case by case approach and, in  a uniform fashion, in \cite{Han2}. Let us describe the main 
 idea. Let $\mathfrak t$ be a Cartan subalgebra of $\k$, contained in
% we may assume that $\mathfrak t\subset 
$\h$. Let $W_\k$ be the Weyl group of
 $(\k, \mathfrak t)$ and $W_{\comm}$
the subgroup of $W$ formed by the elements commuting with the Cartan involution. It turns out that  $W_{\comm}$ is a Coxeter group containing $W_\k$ as a reflection subgroup, and if $W'$ is a set of minimal right coset representatives of 
$W_\k$ in $W_{\comm}$, one has \cite[Proposition 1.2]{Han2}
\begin{equation}\label{han}
 S=2^{\lfloor\frac{\rank(\g)-\rank(\k)+1}{2} \rfloor}\sum_{w\in W'} V(w(\rho)-\rho_\k).
\end{equation}\vskip5pt
The generalization of the previous results to the affine setting has many different aspects. One has first to remark that
in the infinite-dimensional case there are essentially two types of spaces of spinors for the affinization $\widehat{so(V)}$ of the special orthogonal algebra.
They are called basic+vector and spin representations, and correspond to certain (sums of) fundamental representations of  $\widehat{so(V)}$.
For any symmetric pair $(\g,\k)$, one has
an isotropy representation $\k\to so(\p)$ which gives rise to an embedding
$\ka\to \widehat{so(\p)}$. Therefore it is natural to investigate the  
decomposition  of the basic+vector and spin representations into irreducible  
$\ka$-modules.
This analysis has been performed in \cite{CKMP}, deepening previous work of Kac and Peterson \cite{KPNAS}.
The formulas we obtained, which can be considered the infinite-dimensional analogues of \eqref{dspin} and \eqref{han},  show the presence of multiplets, but fail to make it clear why multiplets appear.
On the other hand, formula \eqref{dec} has been generalized by Landweber 
\cite{land} to the affine case using a suitable analogue
 of the cubic  Dirac operator, still obtaining multiplets.\par
Formula  \eqref{kerd}  connects all these items in the setting of twisted 
affine Lie algebras and provides a general framework 
 for the emergence of multiplets. 
 \par
 The basic tool is the Kac-Todorov (cubic) affine Dirac operator $D$ 
(which was introduced in \cite{KacT} before \cite{land}). 
This operator was used 
in \cite{KMP} to obtain a generalization of \eqref{dec} in the twisted affine 
equal rank setting, therefore providing a conceptual explanation of the 
emergence of multiplets in the equal rank case.  
Indeed, in \cite[Theorem 5.4]{KMP} we proved that  that upon replacing  ${\not\!\partial}_{\g/\aa}$  by  $D$, $\g$  by the twisted loop algebra $\widehat L(\g,\si)$,  $\aa$ by $\widehat L(\aa,\si)$ and $L(\L)\otimes S$ by $X$,  one gets a decomposition formula which looks exactly like \eqref{dec}.\par
The missing element for  treating the non equal rank case 
was the identification of the affine analogue of 
$W_{\comm}$. 
It turns out that the correct choice is the subgroup $\widehat{W}_{\comm}$ of the  
Weyl group of $\widehat L(\g,\si)$ formed by the elements commuting with  $\mu$. The last  ingredient for the proof of  
\eqref{kerd} is Proposition \ref{newkumar}, which is  the non equal rank version of a standard result which goes back to $\n$-cohomology theory in the finite dimensional case.\par
Let us briefly describe the organization of the paper. After a thorough 
explanation of the setup in Section 2, we 
review in Section 3 some basic material on twisted affine Lie algebras. In  
particular we construct explicitly the root data of $\widehat L(\g,\si)$ 
in terms of $\sigma$ (cf. Proposition \ref{Pi}), also treating the case 
of semisimple $\g$. These results for arbitrary semisimple $\fg$ 
seem to be new (in \cite{Kac} only simple $\fg$ are treated).
% and of some independent interest.  
Section 4 
is  the most technical one. The upshot is the machinery of minimal coset 
representatives for the symmetry groups naturally appearing in the picture. 
It is a kind of affine analogue, in the framework of Steinberg's abstract 
approach to reflection groups, of the construction of \cite{Han2} outlined
above. The main results here are Corollary 
\ref{coxeter} and Lemma \ref{subgroup}.    
In Section 5 we prove Theorem \ref{multiplet} and in Section 7
we apply it to recover from a new point of view 
the decomposition formulas for the basic+vector and spin representations found 
in \cite{CKMP}. To accomplish this, we need a detailed analysis of the decomposition 
of  Clifford modules as representations of  orthogonal affine algebras. This is done in Section 6.
In Section 8 we deal with asymptotic dimensions 
of multiplets, providing formulas
for their signed sum in the equal rank case.  We are also able to determine the cases in which the
Dirac operator vanishes identically on $X$. In these cases we provide a formula for the sum 
of the asymptotic dimensions of the elements of the multiplet.
 \section{Setup}
For a finite-dimensional reductive Lie algebra $\fg$ over $\C$ with a given symmetric  non-degenerate invariant bilinear form (.,.) denote by $C_\fg$ the corresponding Casimir operator and let $2g_i$ $(i=1,\dots,T$) be the eigenvalues of $C_\g$ on $\g$, 
$\g_i$ being the corresponding eigenspaces.\par
Let $\si$ be an elliptic automorphism of $\g$ 
%with modulus $1$ eigenvalues
leaving the bilinear form $(\cdot,\cdot)$ invariant. For $j\in\R$, 
let $\ov j$ denote the class of $j$ modulo $\ganz$ and
$\g^{\ov j}$ the $\s$-eigenspace with the eigenvalue $e^{2\pi i j}$.  Set 
$$L(\g,\si)
=\sum\limits_{m\in \ov j}
(t^{m}\otimes \g^{\ov j}),\quad\widehat{L}(\g,\si)'=L(\g,\si)\oplus 
\sum_{i=0}^T\C K_i.$$ 
The latter is a central extension of the Lie algebra $L(\g,\si)$ with the Lie 
algebra bracket defined by
$$ [t^m\otimes  a,t^n\otimes b ]=t^{m+n}\otimes[a,b] + 
\d_{m,-n}m(a,b)K_i,\quad m,n\in\ \R\\, , a,b\in\g_i,
$$ $K_i$ being central elements.  
We  extend the Lie algebra $\widehat{L}(\g,\si)'$ by setting $$\widehat  
L(\g,\si)=\widehat{L}(\g,\si)'\oplus \C d,$$ where
$d$ is the derivation of $\widehat{L}(\g,\si)'$  such that $d(K_i)=0$ and $d$ 
acts as $t\frac{d}{dt}$ on
$L(\g,\si)$. The Lie algebra $\widehat L(\g,\si)$ is called
the $\si$-twisted affinization 
of $\g$ with respect to $(\cdot,\cdot)$.  

 Let $\mu$ be an elliptic automorphism of $\g$,
preserving the invariant bilinear form and commuting with $\si$. Then $\mu(\g^{\ov j})\subseteq \g^{\ov j}$. In particular $\mu$ 
induces an automorphism  of $\g^{\ov 0}$ (still denoted by $\mu$).
Consider the set $(\g^{\ov 0})^\mu$ of $\mu$-fixed points in $\g^{\ov 0}$.
Let $\h_0^\mu$ be a Cartan subalgebra of $(\g^{\ov 0})^\mu$ and let $\h_0$ be  centralizer of $\h_0^\mu$ in $\g^{\ov 0}$. Then $\h_0$ is a Cartan subalgebra of $\g^{\ov 0}$. Let $\D_0$ be the set of 
roots of $(\g^{\ov 0},\h_0)$ and fix a set $\Dp_0$ of positive roots that is $\mu$-stable.
Let $\Pi_0$ be the corresponding set of simple roots.
\par
Assume from now that $\g$ is semisimple and that  the form $(\cdot,\cdot)$ is a positive multiple of the Killing form. It follows that $C_\g$ acts on $\g$ as $2gI_\g$ with $g>0$. 
Let $\aa$ be a $\si$-stable reductive subalgebra of $\g$ such that the invariant form 
$(\cdot\,,\cdot)$ is still nondegenerate when restricted to $\aa$. Set
$\aa^{\ov j}=\aa\cap\g^{\ov j}$. Assume furthermore that $\h_0^\mu$ is a 
Cartan subalgebra of $\aa^{\ov 0}$. Let $\p$ be the orthocomplement of $\aa$ 
in $\g$.

 Let  $\D_\aa$ be the set of $\h_0^\mu$-roots for  $\aa^{\ov 0}$.   Then it 
is clear that  $\D_\aa$ is a subset ${\D_0}_{|\h_0^\mu}$. Moreover we can 
and do choose  $\Dp_\aa={\Dp_0}_{|\h_0^\mu}\cap \D_\aa$ as a set of positive 
roots for  $\D_\aa$.

Let  $\widehat L(\aa,\si)$ be the $\sigma$-twisted affinization (with  respect 
to $(\cdot,\cdot)_{|\aa}$)  of  $\aa$. 
Set, using standard notation, 
\begin{equation}\label{cartan}\ha=\h_0\oplus\C K\oplus\C d,\quad \ha^\mu=
\h_0^\mu\oplus \C K\oplus\C d,\end{equation}
($K$ being the central element corresponding to the unique eigenvalue of 
$C_\g$) and
\begin{equation}\label{level}\ha_\aa=\h_0^\mu \oplus\sum_i\C K_i\oplus\C d.\end{equation}
Let $\Da$ be the  set of $\ha$-roots of $\widehat L(\g,\si)$. As a set of 
positive roots for $\Da$ we choose 
\begin{equation}\label{possyst}
\Dap=\Dp_0\cup\{\a\in\Da\mid \a(d)>0\}.\end{equation} Analogously, if $\Da_\aa$ is the set 
of roots for $\widehat L(\aa,\si)$ then we choose 
$\Dap_\aa=\Dp_\aa\cup\{\a\in\Da_\aa\mid \a(d)>0\}$ as a set of positive roots. 
Let $\Pia_\sigma,\,\Pia_\aa$ be the corresponding sets of indecomposable roots.
 
 Let $\L_0$ be the element of $\ha^*$ defined  setting 
$\L_0(d)=\L_0(\h_0)=0$  and $\L_0(K)=1$.  Similarly, 
let $\L_0^i\in \ha_\aa^*$ defined 
by $\L_0^i(\h_0^\mu \oplus\C d)=0,\ \L_0^i(K_j)=\d_{ij}.$
Define also 
$\d\in\ha^*$  setting $\d(d)=1$ and $\d(\h_0)=\d(K)=0$.  Let $\d_\aa$  be 
the analogous element of
$\ha_\aa^*$ defined by  $\d_\aa(K_i)=0$ for all $i$, 
$\d_\aa(\h_0^\mu)=0,\,\d_\aa(d)=1$.

 In  \cite[Ch. 10, \S{} 5]{Helgason1} it is shown that $(\cdot,\cdot)_{|\h_0\times\h_0}$ is nondegenerate, thus we can define dually a form $(\cdot,\cdot)$ on $\h_0^*$. Extend $(\cdot,\cdot)$ to all of $\ha^*$ by setting $(\L_0,\d)=1$ 
and $(\L_0,\L_0)=(\d,\d)=(\d,\h_0)=(\L_0,\h_0)=0$. Let $\nu:\ha\to\ha^*$ be 
the isomorphism induced by the form $(\cdot,\cdot)$.
Write $\h_0=\h_0^\mu\oplus \h_\p$ with $\h_\p=\h_0\cap\p$. 
Regard $(\h_0^\mu)^*$ as a subspace of $\h_0^*$ by extending functionals to 
$\h_\p$ by zero.
In turn, we may view $(\ha^\mu)^*$ as a subspace of $\ha^*$. Notice that both 
$\L_0$ and $\d$ are in $(\ha^\mu)^*$, thus our formulas define also a bilinear 
form $(\cdot,\cdot)$ on $(\ha^\mu)^*$.

Set, as usual, $\rho_{\ov 0}=\half\sum_{\a\in\Dp_0}\a$, $\rho_{\aa\,\ov 0}=
\half\sum_{\a\in\Dp_\aa}\a$. Let 
$\D_{\ov j}$ be the set of $\h_0$-weights of $\g^{\ov j}$ and define, 
for $\ov j\ne \ov 0$
\begin{align}
&\rho_{\ov j}=
\half\sum_{\a\in\D_{\ov j}}(\dim\g^{\ov j}_\a)\a,&\rho_\si=
\sum_{0\le j<\frac{1}{2}}(1- 2j)\rho_{\ov j},
\\
&\rho_{\aa \ov j}=\half\sum\limits_{\a\in(\D_{\ov 
j})_{|\h_0^\mu}}(\dim\aa^{\ov j}_\a)\a,
&\rho_{\aa\,\si}=\sum\limits_{0\leq j<\half} (1-2j)\rho_{\aa \ov  j},
\\\label{rhoaff}
&\rhat_\si=\rho_\si+g\L_0,&\rhat_{
\aa\,\si}=\rho_{\aa\,\si}+\sum_ig_i\L_0^i.
\end{align} 
\vskip10pt
Set $\b=\h_0\oplus \n$ to be the Borel subalgebra of $\g^{\ov 0}$ 
corresponding to our choice of $\Dp_0$. Set $\n'=
\mathfrak  n+\sum_{j>0}(t^j\otimes\g^{\ov j})$. A 
$\widehat L(\g,\si)$-module $M$ is called a highest weight module with 
highest weight 
$\L\in\ha^*$ if there is a nonzero vector $v_\L\in M$ such that 
\begin{equation}\label{highest}\mathfrak n'(v_\L)=0,\,\ 
hv_\L=\L(h)v_\L \text{  for $h\in\ha$, }\
U(\widehat L(\g,\si))v_\L=M.\end{equation} 
Given a weight $\L$ in $\ha^*$, we
 let $L(\L)$ be the irreducible highest weight module for 
$\widehat L(\g,\si)$ with highest weight $\L$. 

Similarly, setting $\n'_\aa=\n'\cap\widehat L(\aa,\si)$, a highest weight 
module for $\widehat L(\aa,\si)$ with highest weight $\xi\in\ha_\aa^*$ is a 
$\widehat L(\aa,\si)$-module $N$ having  a nonzero vector $v_\xi\in N$ such 
that 
\begin{equation}\label{highesta}\mathfrak n'_\aa(v_\xi)=0,\,\ 
hv_\xi=\xi(h)v_\xi \text{  for $h\in\ha_\aa$, }\
U(\widehat L(\aa,\si))v_\xi=N.\end{equation} 
Given a weight $\xi$ in $\ha_\aa^*$, we
 let $V(\xi)$ be the irreducible highest weight module for $\widehat 
L(\aa,\si)$ with highest weight $\xi$.

 We retain  the setting of \cite{KMP}. In particular, by specializing to  
$A=\p$ in the construction  of \cite[Section 3.2]{KMP}, we obtain a 
Clifford module that we denote by $F^\si (\p)$
 (see also \eqref{clifford} below). This module is
 denoted by $F^{\ov\tau}(\ov\p)$ in \cite{KMP}.\par As observed in Remark 3.1 
of \cite{KMP}, if $M$ is a highest weight module for $\widehat L(\g,\si)$,
then the module $X=M\otimes F^\si (\p)$ acquires a natural action of 
$\widehat L(\aa,\si)$. Moreover, if $K$ acts on $M$ by $kI_M$, then $K_i$ 
acts on $X$ by $(k+g-g_i)I_X$.
 
 \begin{rem}\label{varfi}This last property can be restated as follows. 
Consider the map $\varphi_\aa:\ha_\aa\to\ha^\mu$, defined by 
\begin{equation}\label{vp}
\varphi_\aa(h)=h\ 
\text{if $h\in\h_0^\mu \oplus\C d$},\quad 
\varphi_\aa(K_i)=K\ 
\text{for all $i$}.\end{equation} 
Then, if $\xi$ is a $\widehat\h_\aa$-weight of $X$, 
$\xi+\rhat_\aa\in\varphi_\aa^*((\ha^\mu)^*)$.
\end{rem}

Let $D$ be the Kac-Todorov (relative) affine Dirac operator acting on $X$. 
This is the operator $(G_{\g,\aa})^X_0$ defined in  Section 4 of \cite{KMP}. 
It has
%$D$ is an operator acting on $X$ having 
the following properties:
\begin{enumerate}
\item $[D,a]=0$ for all $a\in\widehat L(\aa,\si)$.
\item If $N$ is a highest weight module over $\widehat L(\aa,\si)$ with 
highest weight $\xi$ occurring in $X$ and $v\in N$, then 
\begin{equation}\label{azionedirac}
D^2\cdot v=\left(\Vert \L+\rhat\Vert^2-\Vert(\varphi_\aa^*)^{-1}(\xi+\rhat_\aa)\Vert^2\right)v.
\end{equation}
\end{enumerate}
Notice that 2. above makes sense because of Remark \ref{varfi}.
\vskip10pt
\section{Twisted affine Lie algebras}In the rest of the paper we assume that $\g$ is semisimple.

 We now review the theory of twisted affine Lie algebras assuming   
$\sigma$  indecomposable. In particular we show that  $\widehat L(\g,\sigma)$ is an affine Kac-Moody Lie algebra. We follow the approach outlined in Section 8.8 of \cite{Kac} as exposed in  \cite[Ch. 10, \S{} 5]{Helgason1}.
 It is shown in \cite{BM} that there is a regular element of $\g$ that is 
fixed by $\sigma$. This in turn implies that in any Cartan subalgebra of $ \g^{\ov 0}$ (in particular $\h_0$) there  is a $\g$-regular element. Hence  its centralizer is a Cartan subalgebra $\h$ 
of $\g$ and  there is a positive system 
$\Phi^+$ for the set $\Phi$ of roots of 
$(\g,\h)$ having the property that the automorphism of $\Phi$ induced by 
$\sigma$ stabilizes $\Phi^+$. Let $\eta$ be the corresponding diagram automorphism of  $\g$
(cf. \cite[\S 8.1]{Kac}). We can write $\s=\eta e^{2\pi iad(h)}$ with $h\in\h_0$.
  Let $\Pia_\s=
\{\tilde\a_0,\tilde\a_1,\dots\}$ be the set of indecomposable roots in 
$\Da^+$. Set $\D$ to be the set of $\h_0$-weights of $\g$ and set
\begin{equation} \label{realh}
(\h_0)^*_{\R}=span_{\R}(\D).
\end{equation}
 Following \cite[Ch. 10, \S{} 5]{Helgason1} we have 
\begin{enumerate}
\item $\Pia_\s$ is finite having precisely $\dim\h_0+1$ elements.
\item $(\cdot,\cdot)_{|(\h_0)^*_{\R}\times(\h_0)^*_{\R}}$ is positive definite.
%\item $\Pia_\s$ is an indecomposable system of vectors.
\item The matrix $A=(a_{ij})$ where $a_{ij}=
\frac{2(\tilde\a_i,\tilde\a_j)}{(\tilde\a_j,\tilde\a_j)}$ is an indecomposable 
Cartan matrix of affine type.
\end{enumerate}

As shown in the proof of Lemma 5.10 i) of \cite{Helgason1}, this implies that 
$\widehat L(\g,\sigma)$ is isomorphic to the Kac-Moody algebra $\g(A)$. We denote by 
$\Wa_\s$ be its Weyl group.
 Remark that in 
\cite[Lemma 5.3]{KMP} Êwe have proved that 
\begin{equation}\label{coroot}
2\frac{(\rhat_\s,\a)}{(\a,\a)}=1\quad\forall\,\a\in\Pia_\s.
\end{equation}

\vskip5pt
We turn now to the determination of the set of simple roots of 
$\widehat L(\g,\sigma)$. 
As a first case we assume $\sigma=\eta$. 
 Let $\k_\eta$ be the fixed point set of $\eta$ in $\g$. Let $\Phi_\eta$ be 
its set of $\h_0$-roots.  Then $\Phi^+_\eta=\Phi_\eta\cap \Phi^+_{|\h_0}$ is 
a set of positive roots for $\Phi_\eta$. Let $\Pi_\eta=\{\a_1,\dots,\a_l\}$ 
be the corresponding set of simple roots.  
Let $r$ be the order of $\eta$ and set $\g_1$ to be the $\eta$-eigenspace in $\fg$ with the eigenvalue 
$e^{-2\pi i/r}$.

 \begin{lemma}\label{eta}
 The space $\g_1$ is irreducible as a $\k_\eta$-module. Moreover, 
if $\theta$ is its highest weight, then 
 \begin{equation}\label{rseta}
 \Pia_\eta=\{\frac{1}{r}\d-\theta\}\cup\Pi_\eta.
\end{equation}
 \end{lemma}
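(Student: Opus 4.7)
The approach is to read off $\Pia_\eta$ from the positive system $\Dap$ of \eqref{possyst}, using the already-established fact that $\widehat{L}(\g,\eta)\cong\g(A)$ is an affine Kac--Moody algebra with $|\Pia_\eta|=\dim\h_0+1$.

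First I would verify that $\Pi_\eta\subseteq\Pia_\eta$. If $\alpha\in\Pi_\eta$ admitted a decomposition $\alpha=\beta+\gamma$ with $\beta,\gamma\in\Dap$, then $\beta(d)+\gamma(d)=0$ combined with $\beta(d),\gamma(d)\ge 0$ would force $\beta,\gamma\in\Dp_0=\Phi^+_\eta$, contradicting the simplicity of $\alpha$. Since the finite subalgebra $\k_\eta$ is semisimple (by indecomposability of $\widehat{L}(\g,\eta)$), $|\Pi_\eta|=\dim\h_0$, so there is a unique additional element $\alpha_0\in\Pia_\eta\setminus\Pi_\eta$, necessarily with $\alpha_0(d)>0$ by \eqref{possyst}.

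Next I would pin down $\alpha_0$. Since $\eta$ has order exactly $r$, the eigenspace $\g^{\ov{1/r}}$ is nonzero, so $t^{1/r}\otimes\g^{\ov{1/r}}$ contributes positive roots of $\d$-coefficient $1/r$ to $\Dap$. In $\g(A)$ every positive root is a $\ZZ_{\ge 0}$-combination of simple roots, and $\alpha_0$ is the only simple root with positive $\d$-coefficient; writing $\alpha_0(d)=m/r$, the existence of a root of $\d$-coefficient $1/r$ forces $m=1$, and in fact every such root has the form $\alpha_0+\sum_{\alpha_i\in\Pi_\eta}n_i\alpha_i$ with $n_i\in\ZZ_{\ge 0}$. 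Setting $\alpha_0=\tfrac{1}{r}\d+\beta_0$, this shows that the $\h_0$-weights of $\g^{\ov{1/r}}$ are precisely $\beta_0+\ZZ_{\ge 0}\Pi_\eta$, so $\beta_0$ is the unique minimum for the partial order induced by $\Dp_\eta$; since the simple-root space $\g(A)_{\alpha_0}$ is one-dimensional, $\beta_0$ occurs with multiplicity one in $\g^{\ov{1/r}}$.

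To conclude I would invoke the classical theorem on finite-order diagram automorphisms (cf. \cite[Ch.~8]{Kac}): each eigenspace $\g_j$ of a diagram automorphism of a finite-dimensional simple Lie algebra is irreducible as a $\k_\eta$-module, and the semisimple indecomposable case reduces to this via the cycle structure of $\eta$ on the simple summands of $\g$. The Killing form provides a $\k_\eta$-equivariant duality $\g^{\ov{1/r}}\cong(\g^{\ov{-1/r}})^*=\g_1^*$; combining with the previous paragraph, $\g^{\ov{1/r}}$ is irreducible with lowest weight $\beta_0$ and $\g_1$ is irreducible with highest weight $-\beta_0=\theta$, whence $\beta_0=-\theta$ and $\alpha_0=\tfrac{1}{r}\d-\theta$. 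The hard part is the irreducibility itself: the affine analysis only yields a unique indecomposable simple root of positive $\d$-coefficient, which does not \emph{a priori} rule out $\g^{\ov{1/r}}$ splitting into several $\k_\eta$-irreducible summands sharing a common weight-lattice minimum $\beta_0$ of multiplicity one, so irreducibility has to be borrowed from the classical theory of finite-order automorphisms.
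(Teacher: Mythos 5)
Your proof is correct, but it takes a genuinely different route from the paper's. The paper computes $\alpha_0$ and obtains irreducibility of $\g_1$ simultaneously from Lie algebra homology: with $\widehat{\mathfrak u}^-=\sum_{j<0}t^j\otimes\g^{\ov j}$ the nilradical of the parabolic with Levi $\ha+\k_\eta$, a Laplacian computation shows $t^{-1/r}\otimes\g^{\ov{-1/r}}$ is a nonzero submodule of $H_1(\widehat{\mathfrak u}^-)$, while the Kostant--Garland--Lepowsky theorem together with \eqref{coroot} gives $H_1(\widehat{\mathfrak u}^-)\cong\bigoplus_{\alpha\in\Pia_\eta\setminus\Pi_\eta}V(s_\alpha(\rhat_\eta)-\rhat_\eta)=V(-\alpha_0)$, a single irreducible. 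Equality of the two then yields both irreducibility of $\g_1\cong t^{-1/r}\otimes\g^{\ov{-1/r}}$ and $\theta=-(\alpha_0)_{|\h_0}$ in one stroke. You instead extract $\alpha_0(d)=1/r$ and the lowest weight $\beta_0$ from root combinatorics and then import irreducibility from the classical theory of diagram automorphisms of simple Lie algebras, using the Killing-form duality $\g^{\ov{1/r}}\cong\g_1^*$ to pass from the lowest weight $\beta_0$ to the highest weight $\theta=-\beta_0$. Both work; the trade-off is that the paper's homological argument is uniform and handles the semisimple indecomposable case in the same stroke (which the authors flag as new relative to \cite{Kac}), whereas your argument cites the classical irreducibility statement, which is proved only for simple $\g$, so the reduction you merely mention at the end must actually be carried out: if $\g$ is $N$ isomorphic simple summands cyclically permuted by $\eta$ with $\eta^N$ a diagram automorphism of order $m$, then $r=Nm$, $\k_\eta\cong\mathfrak s^{\eta^N}$, and $x\mapsto\sum_{i=0}^{N-1}e^{2\pi i i/r}\eta^i(x)$ is a $\k_\eta$-module isomorphism from the $e^{-2\pi i/m}$-eigenspace of $\eta^N$ in $\mathfrak s$ onto $\g_1$. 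One small slip: the $\h_0$-weights of $\g^{\ov{1/r}}$ are \emph{contained in} $\beta_0+\ZZ_{\ge0}\Pi_\eta$, not equal to that set, but that does not affect your conclusion that $\beta_0$ is the unique minimum of multiplicity one.
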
 
 \begin{proof}It is clear that the simple roots of $\k_\eta$ are 
indecomposable in $\Phi^+_\eta$ hence $\Pi_\eta\subset\Pia_\eta$.  
By property 1 above we deduce that
 $
 \Pia_\eta\backslash\Pi_\eta$ has only one element that we denote by $\a_0$. In order to compute $\a_0$, we argue as in the proof of Lemma 5.10 of \cite{IMRN}. Set $\widehat{\mathfrak u}=\sum_{j>0}t^j\otimes \g^{\bar j}$ and $\widehat{\mathfrak u}^-=\sum_{j<0}t^j\otimes \g^{\bar j}$. Then $(\ha+\k_\eta)\oplus\widehat{\mathfrak u}$ and $(\ha+\k_\eta)\oplus\widehat{\mathfrak u}^-$ are the opposite parabolic subalgebras of $\widehat L(\g,\eta)$ corresponding to $\Pi_\eta$, whose nilradicals are $\widehat{\mathfrak u}$, $\widehat{\mathfrak u}^-$ respectively.  Let $\partial_p:\w^p \widehat{\mathfrak u}^-\to \w^{p-1} \widehat{\mathfrak u}^-$ be the boundary operator affording the Lie algebra homology $H_*(\widehat{\mathfrak u}^-)$. Remark that the Laplacian $L_1=\partial_1^*\partial_1+\partial_2\partial_2^*$ (here $\partial^*$ is the adjoint of $\partial$ w.r.t. the Hermitian form defined in \cite{Kumar}) is zero on $t^{-\frac{1}{r}}\otimes \g^{\ov{-1/r}}\subset\widehat{\mathfrak u}^-$, so that
 $t^{-\frac{1}{r}}\otimes \g^{\ov{-1/r}}$
   is a submodule of $H_1(\widehat{\mathfrak u}^-)$. On the other hand, if   
$\Wa'_\eta$ is a set of minimal right coset representatives of the Weyl group 
of $\k_\eta$
in  $\Wa_\eta$  then, by the Kostant-Garland-Lepowsky theorem (see e.g. 
\cite{Kumar}) and  \eqref{coroot}, we have
\begin{equation}\label{gl} H_1(\widehat{\mathfrak u}^-)\cong \bigoplus_{\substack{w\in \Wa'_\eta\\ \ell(w)=1}} V(w(\rhat_{\eta})-\rhat_{\eta})=\bigoplus_{\a\in\Pia_\eta\backslash\Pi_\eta} V(s_\a(\rhat_{\eta})-\rhat_{\eta})=V(-\a_0).\end{equation}
 Use now the identification $\g_1\cong t^{-\frac{1}{r}}\otimes \g^{\ov{-1/r}}$ as $\k_\eta$-modules to deduce at once that $\g_1$ is irreducible and that its highest weight $\theta$ is equal to $-(\a_0)_{|\h_0}$ and in turn that $\a_0=\frac{1}{r}\d-\theta$. \end{proof}
 
We now turn to the general case.
Let  $W_\eta$ be the Weyl group of $\k_\eta$, and consider
\begin{equation}\label{latticeM}
 M=span_\ganz(\frac{1}{r}W_\eta\theta^\vee),
 \end{equation} 
 a lattice in $\h_0$.
 Define, for $\a\in M$, the corresponding ``translation" in  $\ha^*$ by
 $$
 t_\a(\l)=\l+\l(K)\nu(\a)-(\l(\a)+\half|\a|^2\l(K))\d,
 $$
and the translation in  $\ha$ by the contragradient action:
 $$
 t_\a(h)=h+\d(h)\a-((h,\a)+\half|\a|^2\d(h))K.
 $$
As in \cite[Chapter 6]{Kac} one proves that if  $\a\in M$, then $t_\a\in\Wa_\eta$ and that 
 $
 \Wa_\eta=T\rtimes W_\eta
 $
where $T=span (t_\a)$. For this remark that  $s_\theta\in W_\eta$ since  $\theta$ is a multiple of a root of  $\k_\eta$  (see  \cite[Proposition 9.18 (a)]{Carter}).\par
Set $(\h_0)_\R=\oplus_{\a\in\Pi_\eta}\R\a^\vee$ and $\ha_\R=\R d\oplus \R K\oplus(\h_0)_\R$. As usual, exploit the  bijection
 $$\{h\in \ha_\R\mid\d(h)=1\}/\R K \to (\h_0)_\R$$
given by the natural projection to define an affine action on $(\h_0)_\R$. It is clear that $t_\l(h)=h+\l$ for $\l\in M$ and that the linear reflection w.r.t. $\frac{1}{r}\d-\theta$ maps to the reflection in the affine hyperplane $\theta=\frac{1}{r}$.  Lemma \ref{eta} implies that
\begin{equation}\label{caf}
C_f=\{h\in(\h_0)_\R\mid \a_i(h)\geq 0,\,i=1,\ldots,l,\,\theta(h)\leq \frac{1}{r}\}
\end{equation}  
 is a fundamental domain for this action. Therefore,
 if $\sigma=\eta e^{2\pi i \,ad(h)}$,  there exists  $w\in\Wa_\eta$ such that  $w(d+h)\equiv d+h'\mod \R K$ with  $\a_i(h')\ge 0$ and $\theta(h')\le \frac{1}{r}$.

Consider  the map $\a\mapsto w^{-1}(\a)+w^{-1}(\a)(h)\d$. This map is a bijection between the roots of $\widehat L(\g,\eta)$ and the roots of $\widehat L(\g,\sigma)$.

If we decompose $w^{-1}$ as  $w^{-1}=t_\l w',\,w'\in W_\eta$ then 
\begin{align*}
t_\l w'(\a)+t_\l w'(\a)(h)\d&=w'(\a)-w'\a(\l)\d+w'(\a)(h)\d\\&=w'(\a)+\a((w')^{-1}(h-\l))\d=w'(\a)+\a(h')\d.
\end{align*}
In particular, if we set  
\begin{equation}\label{si}
s_i=\a_i(h')\text{Ê for  $i=1,\ldots,l$ and } s_0=\frac{1}{r}-\theta(h')
,\end{equation} then we have 
$ w^{-1}(\a_i)+w^{-1}(\a_i)(h)\d=w'(\a_i)+s_i\d$ for $i=1,\ldots,l$ and $w^{-1}(\a_0)+w^{-1}(\a_0)(h)\d=-w'(\theta)+s_0\d$.
Hence
\begin{equation}\label{piaprime}\Pia'=\{s_0\d-w'(\theta),s_1\d+w'(\a_1),\dots,s_l\d+w'(\a_l)\}
\end{equation}
is a set of simple roots for $\widehat L(\g,\sigma)$. In particular, the set  
$\Pi'_0$ of the roots in $\Pia'$ such that  $s_i=0$ is a set of simple roots for $\g^{\ov 0}$. By finite dimensional theory (see \cite{Carter}), $w'(\theta),w'(\a_1),\ldots,w'(\a_l)$ are multiples of roots of $\k_\eta$. Hence there exists an element $w''\in W_\eta$ such that $w''(\Pi'_0)=\Pi_0$. Observe that $w^{-1}s_{\a_i}=t_\l s_{w'(\a_i)}w'$ if $i=1,\dots,l$ while $w^{-1}s_{\a_0}=t_{\l+(1/r)w'(\theta^\vee)}s_{w'(\theta)}w'$. If $s_i=0$ then we can substitute $w$ with $s_{\a_i}w$, thus we can choose $w$ so that $w^{-1}=t_{\l'}w''w'$. Hence we can assume that $\Pi_0'=\Pi_0$.

Since
$
\Da\cap(\sum_{\a\in\Pia'}\nat\a)\subset\Dap$ and 
$ \Da\cap(-\sum_{\a\in\Pia'}\nat\a)\subset-\Dap$ we may deduce that  $\Da\cap\sum_{\a\in\Pia'}\nat\a=\Dap.$ We have proven the following

\begin{prop}\label{Pi} Write $\s=\eta e^{2\pi i ad(h)}$ with $\eta$ a diagram automorphism and $h\in\h_0$. Choose $w\in\Wa_\eta$ such that $w(d+h)\equiv d+h'\mod \R K$ with $h'\in C_f$. Write $w^{-1}=t_\l w'$ with $\l\in M$ and $w'\in W_\eta$. Set $s_i=\a_i(h')$ for $i=1,\dots,l$ and $s_0=\frac{1}{r}-\theta(h')$. We can choose $w$ in such a way that, if 
\begin{equation}\label{sr}\Pia=\{s_0\d-w'(\theta),s_1\d+w'(\a_1),\dots,s_l\d+w'(\a_l)\},\end{equation}
then  the set of roots in $\Pia$ such that $s_i=0$ is equal to $\Pi_0$. 

Then the set $\Pia$ in \eqref{sr} is the set $\Pia_\s$ of simple roots of $\widehat L(\g,\s)$ corresponding to $\Dap$.\end{prop}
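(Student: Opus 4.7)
The plan is to reduce to the already-known case $\sigma = \eta$ (Lemma \ref{eta}) by exploiting the $\Wa_\eta$-action. The elliptic factorization $\s = \eta e^{2\pi i\, ad(h)}$ with $h \in \h_0$ is crucial: it means $\widehat L(\g, \s)$ and $\widehat L(\g, \eta)$ are isomorphic as Lie algebras, with the isomorphism sending each root $\a$ of $\widehat L(\g, \eta)$ to $\a + \a(h)\d$. Composing with any $w \in \Wa_\eta$ then produces candidate sets of simple roots for $\widehat L(\g, \s)$, and my task reduces to choosing $w$ so that the image lands in $\Dap$ and so that the subset with $\d$-coefficient zero matches $\Pi_0$.

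First, I would set up the affine action of $\Wa_\eta$ on $(\h_0)_\R$. Since $\theta$ is a multiple of a $\k_\eta$-root (by \cite[Proposition 9.18(a)]{Carter}), $s_\theta \in W_\eta$, and following \cite[Chapter 6]{Kac} one obtains the semidirect decomposition $\Wa_\eta = T \rtimes W_\eta$ with $T$ the group of translations by elements of $M$. Using the standard projection $\{h \in \ha_\R \mid \d(h) = 1\}/\R K \to (\h_0)_\R$, the affine reflection associated to $\frac{1}{r}\d - \theta$ becomes the reflection in the hyperplane $\theta = \frac{1}{r}$, so Lemma \ref{eta} identifies the alcove $C_f$ of \eqref{caf} as a fundamental domain. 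Hence there exists $w \in \Wa_\eta$ with $w(d+h) \equiv d + h' \mod \R K$ and $h' \in C_f$.

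Next, I would decompose $w^{-1} = t_\l w'$ with $\l \in M$ and $w' \in W_\eta$, and compute for each simple root $\a$ of $\widehat L(\g, \eta)$:
\[
t_\l w'(\a) + (t_\l w'(\a))(h)\, \d = w'(\a) + \a\bigl((w')^{-1}(h - \l)\bigr)\d = w'(\a) + \a(h')\d.
\]
Applied to $\a_0 = \tfrac{1}{r}\d - \theta$ and $\a_1, \dots, \a_l$, this yields the set $\Pia'$ of \eqref{piaprime} with all coefficients $s_i \ge 0$, because $h' \in C_f$. That $\Pia'$ is the set of simple roots for $\Dap$ then follows from the combined inclusions $\Da \cap \sum_{\a \in \Pia'} \nat\a \subset \Dap$ and $\Da \cap \sum_{\a \in \Pia'} (-\nat)\a \subset -\Dap$.

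The main obstacle is the final normalization $\Pi_0' = \Pi_0$. A priori, $\Pi_0'$ is only a set of simple roots of $\g^{\ov 0}$ conjugate to $\Pi_0$ by some $w'' \in W_\eta$, since $w'(\a_i)$ and $w'(\theta)$ are multiples of $\k_\eta$-roots. To absorb $w''$ into $w'$, I would use the freedom that, whenever $s_i = 0$, replacing $w$ by $s_{\a_i}w$ changes neither the set $\Pia'$ nor the coefficients $s_j$: indeed $w^{-1}s_{\a_i} = t_\l s_{w'(\a_i)} w'$ for $i \ge 1$, and $w^{-1}s_{\a_0} = t_{\l + (1/r)w'(\theta^\vee)} s_{w'(\theta)} w'$, so these operations stay inside $\Wa_\eta$ and only modify the decomposition $(\l, w')$. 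Iterating realizes the desired $w''$ on the left of $w'$. The bookkeeping for the $i = 0$ case, where the translation shifts by $(1/r)w'(\theta^\vee)$, is the technically most delicate point.
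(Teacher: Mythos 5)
Your proposal tracks the paper's own argument essentially step for step: the semidirect decomposition $\Wa_\eta=T\rtimes W_\eta$ (using $s_\theta\in W_\eta$), the affine action on $(\h_0)_\R$ and the fundamental alcove $C_f$, the map $\a\mapsto w^{-1}(\a)+w^{-1}(\a)(h)\d$, the factorization $w^{-1}=t_\l w'$ and the resulting computation of $\Pia'$, and finally the normalization by left-multiplication by simple reflections.

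One misstatement is worth flagging. You assert that, when $s_i=0$, replacing $w$ by $s_{\a_i}w$ ``changes neither the set $\Pia'$ nor the coefficients $s_j$''. The coefficients $s_j$ are indeed unchanged, because $s_{\a_i}$ fixes $d+h'$ when $s_i=0$; but $\Pia'$ \emph{does} change, since $w'$ is replaced by $s_{w'(\a_i)}w'$ and $\Pia'$ is expressed explicitly in terms of $w'$. In fact, this change to $\Pia'$ (hence to its degree-zero part $\Pi'_0$) is precisely the mechanism by which the normalization achieves $\Pi'_0=\Pi_0$. Before that step the inclusion $\Da\cap\sum_{\a\in\Pia'}\nat\a\subset\Dap$ that you invoke can fail, because $\Pi'_0$ may be a different set of simple roots of $\g^{\ov 0}$, and then $\Pia'$ is simple for a different positive system than $\Dap$. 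This looks like a slip in the write-up rather than a conceptual gap; the route and the conclusion match the paper's.
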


\section{Preparation on Weyl groups}
In the rest of the paper we assume that $\g$ is semisimple and that $\s$ is 
an elliptic automorphism (not necessarily indecomposable) of $\g$.

Recall that we introduced another automorphism $\mu$  of $\g$ and assumed that $\mu\s=\s\mu$.  Extend $\mu$ to $\ha^*$ by setting $\mu(\d)=\d$ and $\mu(\L^i_0)=\L^i_0$. 
Observe that $\mu$ induces an automorphism of the diagram of $\widehat L(\g,\s)$. Indeed $\mu(\Da)\subset\Da$, since $\mu\s=\s\mu$. Since we chose  $\Dp_0$ to be $\mu$-stable, we deduce  from \eqref{possyst} that  $\mu(\Dap)\subset \Dap$,  as desired.\par If $J$ is a $\mu$-orbit in $\Pia_\s$ and the root system $\D_J$ generated by $J$ is of finite type,  we let $(w_0)_J$ be the the longest element of the Weyl group of $\D_J$. If $\D_J$ is not of finite type, we set $(w_0)_J=1$. Let $\a_J=\frac{1}{|J|}\sum_{\a\in J}\a$. Clearly $\a_J=(\a_j)_{|\ha^\mu}$ for any $j\in J$.

From now on we assume for simplicity that the action induced by $\mu$ on the set of components of $\Pia_\s$ has a single orbit. 
\begin{lemma}\label{independent}
The set $\{\a_J\}$ is linearly independent, hence it is a basis of $(\h_0^\mu)^*\oplus\C\d$.
\end{lemma}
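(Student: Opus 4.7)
The plan is to verify in order: (a) each $\a_J$ lies in $(\h_0^\mu)^*\oplus\C\d$; (b) the $\a_J$'s are linearly independent; (c) their number equals $\dim \h_0^\mu + 1$. Then (b) and (c) together give the basis claim. Assertion (a) is immediate from the identity $\a_J=\a_j|_{\ha^\mu}$ (valid for any $j\in J$) noted just before the lemma, combined with the fact that every element of $\Pia_\s$ annihilates $K$.

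For (b), the key input is that $\mu$ acts on the finite set $\Pia_\s$ as a permutation of finite order---a consequence of $\mu(\Dap)\subset\Dap$ together with $\mu$ preserving indecomposability of roots. Expanding a vanishing combination $\sum_J c_J\a_J=0$ as $\sum_{\a\in\Pia_\s}(c_{J(\a)}/|J(\a)|)\a=0$, I would derive linear independence by restricting to the $\mu$-fixed part of $\mathrm{span}(\Pia_\s)$, where the single-$\mu$-orbit-on-components hypothesis is designed to rule out $\mu$-fixed dependencies among the simple roots. For (c), I would identify the $\mu$-fixed subspace of $(\h_0)^*\oplus\C\d$ directly: since $\mu$ commutes with $\s$ and fixes $\h_0^\mu$ by definition, and $\h_0$ is the $\g^{\ov 0}$-centralizer of $\h_0^\mu$, $\mu$ preserves $\h_0$; its $\mu$-fixed part equals $\h_0^\mu$ itself by maximality of $\h_0^\mu$ as a Cartan subalgebra of $(\g^{\ov 0})^\mu$. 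Dualising and using $\mu(\d)=\d$ identifies the $\mu$-fixed subspace of $(\h_0)^*\oplus\C\d$ with $(\h_0^\mu)^*\oplus\C\d$, of dimension $\dim \h_0^\mu + 1$.

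The main obstacle is step (b), specifically handling the case in which $\Pia_\s$ itself is not linearly independent. This occurs precisely when $\widehat L(\g,\s)$ decomposes into several indecomposable affine components, producing dependencies from equating their imaginary roots in $\ha^*$. The single-$\mu$-orbit-on-components hypothesis is what guarantees that no such dependency survives in the $\mu$-fixed part; verifying this carefully, using the explicit description of $\Pia_\s$ from Proposition~\ref{Pi}, is the delicate point. Once (b) is secured, the spanning statement in (c) is essentially automatic, since by construction the $\a_J$'s are $\mu$-averages of a set generating $(\h_0)^*\oplus\C\d$.
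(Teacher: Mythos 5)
Your route is genuinely different from the paper's, and the step you flag as ``the delicate point'' is indeed the crux --- and it is left unproven, so this is a real gap. To close it you would need to describe the space $R$ of linear dependencies among $\Pia_\s$ and show $R^\mu=0$. Concretely, let $\Gamma_1,\dots,\Gamma_t$ be the components of $\Pia_\s$; then $R$ (the kernel of $(x_\a)\mapsto\sum_\a x_\a\a$ on $\C^{\Pia_\s}$) has dimension $t-1$ and is spanned by $e_i-e_1$, where $e_i$ is the vector supported on $\Gamma_i$ with positive entries normalized so that $\sum_\a(e_i)_\a\,\a=\d$. Your vanishing combination $\sum_Jc_J\a_J=0$ hands you a $\mu$-fixed vector of $R$. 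One must then argue: $\mu$ permutes the $e_i$'s (it maps each $\Gamma_i$ isomorphically, as a marked diagram, onto some $\Gamma_{\pi(i)}$ and fixes $\d$); the single-orbit hypothesis forces $\pi$ to be a $t$-cycle; hence any $\mu$-fixed element of $\mathrm{span}(e_1,\dots,e_t)$ is proportional to $\sum_ie_i$, which has nonzero coefficient sum and so lies outside $R$. None of this is in your sketch, and your appeal to Proposition~\ref{Pi} (stated for indecomposable $\s$, i.e.\ a single component) does not by itself supply the multi-component bookkeeping.

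For comparison, the paper avoids describing $R$ entirely: it adjoins one derivation $d_i$ per component to form an extended Cartan $\ha^e$, in which the extended simple system $(\Pia_\s)^e$ is linearly independent outright, so the orbit-averages $(\a_J)^e$ are linearly independent for trivial reasons; the single-orbit hypothesis is then used only to identify $(\ha^e)^\mu$ with $\ha^\mu$ via $d\mapsto\sum_i d_i$, an isomorphism whose transpose carries $(\a_J)^e$ to $\a_J$. Both routes can be made to work, but the paper's removes the need for an explicit analysis of the dependency space. Your part (a) and the spanning half of (c) ($\mu$-averaging a $\mu$-stable spanning set of $\h_0^*\oplus\C\d$ spans the $\mu$-fixed subspace, which is $(\h_0^\mu)^*\oplus\C\d$) are fine as stated.
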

\begin{proof}
Let $\{\Gamma_1,\dots,\Gamma_t\}$ be the components of $\Pia_\s$ and set $\ha^e=\h_0\oplus\C K\oplus \sum_{i=1}^t \C d_i$ ($e$ stands for ``extended"). Then extending $\a\in\Gamma_i$ to an element $\a^e$ of $(\ha^e)^*$ by setting $\a^e(d_j)=\d_{ij}\a(d)$, we have that $(\Pia_\s)^e$ is linearly independent. Extend $\mu$ to $\ha^e$ in such a way that $\mu(d_i)=d_j$ if $\mu(\Gamma_i)=\Gamma_j$. Hence $\mu(\a^e)=\mu(\a)^e$. It is then clear that $\{(\a_J)^e\}$ is linearly independent. Set $d^e=\sum d_i$. By our assumption $(\ha^e)^\mu$ is $\C d^e \oplus \h_0^\mu\oplus \C K$. Now the map $K\mapsto K$, $d\mapsto d^e$, $h\mapsto h$ if $h\in\h_0^\mu$ is an isomorphisms between $\ha^\mu$ and $(\ha^e)^\mu$, thus its transpose is an isomorphism from $((\ha^e)^\mu)^*$ to $(\ha^\mu)^*$ that maps $(\a_J)^e$ to $\a_J$.
\end{proof}

 Set $$\Wa(\mu)=\{w\in\Wa_\s\mid w\mu=\mu w\}.$$ 
 Let  
$\widehat{W}_{\comm}$ 
be the group of linear transformations of $\ha^\mu$ generated by the set of 
reflections $S=\{s_\a\mid \a\in {(\Pia_\s)}_{|\ha^\mu}\}$. (Here, if $\a$ is isotropic, we mean $s_\a$ to be the identity). 
 If $w\in\Wa(\mu)$,
we let $\tilde w$ be the restriction of $w$ to $\ha^\mu$.\par

\begin{prop}\label{Wmu}
The map $w\mapsto \tilde w$ defines an isomorphism between $\Wa(\mu)$ and 
$\widehat{W}_{\comm}$. 
\end{prop}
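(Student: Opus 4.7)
The map $w\mapsto\tilde w$ is a group homomorphism from $\Wa(\mu)$ to $GL(\ha^\mu)$: since $w\in\Wa(\mu)$ commutes with $\mu$, it preserves the $\mu$-fixed subspace $\ha^\mu$, and restriction is clearly multiplicative. The plan is to show (a) every generator $s_{\a_J}$ of $\widehat{W}_{\comm}$ lies in the image; (b) the image is contained in $\widehat{W}_{\comm}$, so that $\Wa(\mu)$ is generated by elements whose restrictions are reflections in $(\Pia_\s)_{|\ha^\mu}$; and (c) the map is injective.

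For (a) and (b), I would invoke a Steinberg-type theorem for fixed points of diagram automorphisms of the Coxeter group $\Wa_\s$: namely, that $\Wa(\mu)$ is generated by the longest elements $(w_0)_J$ as $J$ runs over the $\mu$-orbits of $\Pia_\s$. The key point is that $\mu$ permutes the elements of $J$, hence preserves the parabolic $W_J=\langle s_\a\mid\a\in J\rangle$; when $\D_J$ is of finite type, the longest element of $W_J$ is uniquely characterized by sending $J$ to $-J$ and is therefore $\mu$-fixed, while if $\D_J$ is not of finite type, then $\mu$ acts transitively on $J$ and the null root of $\D_J$ must be a positive scalar multiple of $\sum_{\a\in J}\a$; hence $\a_J$ is isotropic and $s_{\a_J}=1$, matching the convention $(w_0)_J=1$. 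Surjectivity then reduces to a case-by-case verification that $\widetilde{(w_0)_J}$ acts on $\ha^\mu$ exactly as the reflection $s_{\a_J}$ with respect to the restricted bilinear form. This is the affine analogue of the folding calculation; the classification of affine Cartan matrices together with the transitivity of $\mu$ on $J$ drastically limit the possible types of $\D_J$ (essentially $A_1^{|J|}$, $A_2$, and small rank-$2$ cases), so the verification is finite.

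For injectivity (c), suppose $w=(w_0)_{J_{i_1}}\cdots(w_0)_{J_{i_k}}\in\Wa(\mu)$ restricts to the identity on $\ha^\mu$; then $s_{\a_{J_{i_1}}}\cdots s_{\a_{J_{i_k}}}=1$ in $GL(\ha^\mu)$. Lemma \ref{independent} ensures that the $\a_J$'s are linearly independent in $(\ha^\mu)^*$, which, combined with the folding data above, pins down the Coxeter-type relations satisfied by the generators $s_{\a_J}$. These relations lift to corresponding relations among the $(w_0)_J$'s inside the Coxeter group $\Wa_\s$, forcing $w=1$. Alternatively, one can exhibit a $\ha^\mu$-regular vector with respect to the restricted root system $(\Pia_\s)_{|\ha^\mu}$ and invoke standard Coxeter faithfulness.

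The main obstacle is the case-by-case folding computation verifying $\widetilde{(w_0)_J}=s_{\a_J}$ in a uniform way and the abstract matching of braid relations between $\widehat{W}_{\comm}$ and $\Wa(\mu)$. This is the affine counterpart of the Hanlon--Steinberg fixed-point picture from \cite{Han2}, and although the classification makes the check finite, producing a single uniform statement is the technically most delicate part of the proof.
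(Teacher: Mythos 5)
Your overall blueprint matches the paper's: show the restriction map is a homomorphism into $GL(\ha^\mu)$, show the image contains the reflections $s_{\a_J}$ (using $(w_0)_J$ for finite-type orbits and the isotropy of $\a_J$ for infinite-type orbits), show the image is exactly $\widehat W_{\comm}$, and show injectivity. Your treatment of the infinite-type case (each component of $J$ is forced to be of type $A_l^{(1)}$ by the classification, so $\a_J$ is isotropic and both sides degenerate to the identity) is essentially the paper's argument.

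However, your injectivity argument has a genuine gap. Your first route --- that the Coxeter-type relations holding among the $s_{\a_J}$ in $GL(\ha^\mu)$ ``lift to corresponding relations among the $(w_0)_J$'s inside $\Wa_\s$, forcing $w=1$'' --- is circular: the claim that every relation in the image lifts to a relation in $\Wa(\mu)$ is exactly the injectivity you are trying to prove. (It also implicitly uses that $\widehat W_{\comm}$ is a Coxeter group with a specific presentation, which the paper only establishes afterwards, in Corollary~\ref{coxeter}.) Your second route (regular vector plus ``standard Coxeter faithfulness'') is not spelled out enough to evaluate; it is unclear what ``$\ha^\mu$-regular'' should mean before the restricted root system $\Sigma$ has been set up. The paper's injectivity proof is much more elementary and avoids all of this: if $w\neq 1$ then $w(\a_j)\in -\Dap$ for some simple $\a_j$; restricting to $\ha^\mu$ gives $\tilde w(\a_J)=w(\a_j)_{|\ha^\mu}\in(-\Dap)_{|\ha^\mu}$, while $\a_J\in(\Dap)_{|\ha^\mu}$, and since $(\Dap)_{|\ha^\mu}\cap(-\Dap)_{|\ha^\mu}=\emptyset$, $\tilde w\neq 1$. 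You should replace your injectivity step with this observation.

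A smaller issue: you invoke a ``Steinberg-type theorem'' that $\Wa(\mu)$ is generated by the $(w_0)_J$'s, without proof. In the affine (Kac-Moody) setting this requires care, since $\mu$-orbits can generate infinite parabolics. The paper gives a short inline argument: for $w\in\Wa(\mu)$, $w\neq 1$, choose $\a_j$ with $w(\a_j)\in -\Dap$ and observe that the orbit $J$ of $\a_j$ must generate a finite-type subsystem (otherwise $w$ would send the infinite set $\D_J\cap\Dap$ into $-\Dap$, impossible); then $(w_0)_J$ is defined, lies in $\Wa(\mu)$, and one inducts on $\ell(w)$ as in Carter's Proposition 9.17. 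You should either supply this argument or cite a reference that explicitly covers graph automorphisms of Kac-Moody Weyl groups.
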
\begin{proof} First of all we prove that the map $w\mapsto \tilde w$ from $\Wa(\mu)$ to the set of linear maps on $\ha^\mu$ is injective. In fact, if $w\ne 1$ then there is $\a_j$ such that $w(\a_j)\in-\Dap$. Assume $\a_j\in J$. Since ${(\Dap)}_{|\ha^\mu}\cap(-\Dap)_{|\ha^\mu}=\emptyset$, we see that $\tilde w(\a_J)\ne\a_J$, hence $\tilde w\ne 1$.

Next we show that the image of the map contains $\Wa_\comm$. If $\D_J$ is of finite type, then arguing as in Proposition 9.17 of \cite{Carter}, we see that $(w_0)_J$ is in $\Wa(\mu)$ and that $\widetilde{(w_0)_J}=s_{\a_J}$. We now check that, if $\D_J$ is not of finite type then $\a_J$ is isotropic, so that $\widetilde{(w_0)_J}=s_{\a_J}$ in this case too. Fix a component $J_0$ of $J$ and let $\langle\mu\rangle_0$ be the subgroup of the cyclic subgroup $\langle\mu\rangle$ generated by $\mu$ that stabilizes $J_0$. Since $J$ is an orbit of $\langle\mu\rangle$, we have that $J_0$ is an orbit of $\langle\mu\rangle_0$. If $\mu^k$ is any generator of $\langle\mu\rangle_0$, we have that $\mu^k$ is a diagram automorphism of $J_0$ having a single orbit. Browsing Tables Aff 1--3 of \cite{Kac}, we see that $J_0$ is of type $A^{(1)}_l$ ($l\ge 1$). But in this case $\sum_{\a_j\in J_0}\a_j\in\C\d$, thus $\a_J$ is isotropic.\par
It remains to show that the image of the map is precisely $\Wa_\comm$. This is accomplished by showing that $\Wa(\mu)$ is generated by $$\{(w_0)_J\mid J \text{ $\mu$-orbit in $\Pia_\s$}\}.$$ 
Suppose that $w\in\Wa(\mu)$ and that $w\ne 1$. Then there exists a simple root $\a_j$ such that $w(\a_j)\in-\Dap$. We claim that if $J$ is the $\mu$-orbit to which $\a_j$ belongs, then $\D_J$ is of finite type.
If not, we would have that $w(\D_J\cap\Dap)\subset -\Dap$, which is impossible for $\D_J\cap\Dap$ is infinite. But then we can argue as in Proposition 9.17 of \cite{Carter} by induction on $\ell(w)$ to conclude.
\end{proof}
 \vskip5pt

\begin{cor}\label{fund} Let $\l\in\ha_\R^*$. Assume that $(\l,\a)\ge 0$  for 
any $\a\in\Dap$ and that $(\l+\rhat_\si)_{|\h_\p}=0$. Suppose that there is 
$w\in\Wa_\si$ such that $w(\l+\rhat_\si)_{|\h_\p}=0$.
Then  $w \in \Wa(\mu)$, hence $\tilde w\in
\widehat{W}_{\comm}$. 
\end{cor}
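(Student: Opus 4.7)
The plan is to show that $w$ commutes with $\mu$, so that $w\in\Wa(\mu)$, and then invoke Proposition~\ref{Wmu} to get $\tilde w\in\widehat W_{\comm}$. The key observation is that the two hypotheses $(\l+\rhat_\si)_{|\h_\p}=0$ and $w(\l+\rhat_\si)_{|\h_\p}=0$ both say that $\l+\rhat_\si$ and $w(\l+\rhat_\si)$ lie in $(\ha^\mu)^*$, since $\h_\p$ is the orthogonal complement of $\h_0^\mu$ in $\h_0$ and, $\mu$ being semisimple and preserving the form, $\h_\p$ coincides with the sum of the non-trivial $\mu$-eigenspaces in $\h_0$. Consequently, the dual action of $\mu$ fixes $(\ha^\mu)^*$ pointwise, so
\[
\mu(\l+\rhat_\si)=\l+\rhat_\si,\qquad \mu\,w(\l+\rhat_\si)=w(\l+\rhat_\si).
\]

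Next I would combine these to get $\mu w\mu^{-1}(\l+\rhat_\si)=\mu w(\l+\rhat_\si)=w(\l+\rhat_\si)$, so that $w^{-1}\mu w\mu^{-1}$ stabilizes $\l+\rhat_\si$. Since $\mu$ commutes with $\si$, it permutes $\Da$ and hence acts on $\Wa_\si$ by conjugation, which gives $\mu w\mu^{-1}\in\Wa_\si$ and thus $w^{-1}\mu w\mu^{-1}\in\Wa_\si$.

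The crux of the proof is to verify that the stabilizer of $\l+\rhat_\si$ in $\Wa_\si$ is trivial. This is where the dominance hypothesis on $\l$ and formula \eqref{coroot} come in: the latter gives $(\rhat_\si,\a)=(\a,\a)/2>0$ for every $\a\in\Pia_\si$ (all simple roots being non-isotropic, as the algebras involved are of affine Kac-Moody type), while $(\l,\a)\ge 0$ by hypothesis. Hence $(\l+\rhat_\si,\a)>0$ for every simple $\a$, i.e.\ $\l+\rhat_\si$ is strictly dominant. For any Kac--Moody Weyl group, the stabilizer of a strictly dominant element is trivial: if $u\ne 1$ fixed $\l+\rhat_\si$, pick a simple root $\a_j$ with $u(\a_j)\in-\Dap$; then $(\l+\rhat_\si,\a_j)=(u(\l+\rhat_\si),u(\a_j))$ is at once strictly positive (by strict dominance of $\l+\rhat_\si$) and non-positive (expanding $-u(\a_j)\in\Dap$ in the $\mu$-invariant basis of simple roots and using again strict dominance), a contradiction.

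Therefore $w^{-1}\mu w\mu^{-1}=1$, so $w\in\Wa(\mu)$, and Proposition~\ref{Wmu} yields $\tilde w\in\widehat W_{\comm}$. The one delicate point is the triviality of the stabilizer, which I expect to be the main obstacle to make fully rigorous, as it requires invoking \eqref{coroot} and the standard strictly-dominant argument for Kac--Moody Weyl groups; everything else is formal $\mu$-equivariance bookkeeping.
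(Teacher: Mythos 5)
Your proof is correct and follows essentially the same route as the paper: both arguments use the $\mu$-fixedness of $\l+\rhat_\si$ and $w(\l+\rhat_\si)$, the fact that $\mu$ normalizes $\Wa_\si$, and the regularity (trivial stabilizer) of $\l+\rhat_\si$ coming from \eqref{coroot} together with dominance of $\l$. The only difference is cosmetic — you spell out the standard strictly-dominant-implies-trivial-stabilizer argument that the paper compresses into the word "regular" — and the phrase "$\mu$-invariant basis of simple roots" in your stabilizer argument is a harmless imprecision (you just mean expanding in $\Pia_\si$).
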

\begin{proof} 
We shall prove that if  $h\in\ha$ is  regular w.r.t. $\Da$, fixed by $\mu$   and such that $Re(\d(h))>0$, then  
\begin{equation}\label{imply}w\in\Wa_\s,\quad \mu(w(h))=w(h)\implies  w\in\Wa(\mu).\end{equation}
Since by  \eqref{coroot}Ê $h=\l+\rhat_\si$ is regular and $(\d,\l+\rhat_\s)>0$, the  claim follows from \eqref{imply} and  Proposition
\ref{Wmu}.
\par
To prove \eqref{imply}, remark that, since $\s\mu=\mu\s$, we have that $\mu(\Da)=\Da$. Since $\mu$ preserves the form $(\cdot,\cdot)$, we have that $\mu\Wa_\s\mu^{-1}=\Wa_\s$. Hence $w(h)=\mu(w(h))=
\mu w\mu^{-1}(\mu(h))=\mu w\mu^{-1} (h)$. Since $h$ is regular, then
$w=\mu w\mu^{-1}$.
 \end{proof}

\begin{cor}\label{commute}
Write $\s=\eta e^{2 \pi i ad(h)}$ with $h\in\h_0$ and assume that $\mu\eta=\eta\mu$ and that $\Pi_\eta$ is $\mu$-stable. Then we can choose $h$ and the element $w\in \Wa_\eta$ given by Proposition \ref{Pi} in such a way that $\mu(h)=h$ and $w\mu=\mu w$.
\end{cor}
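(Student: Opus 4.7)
The plan is to proceed in two stages: first adjust the choice of $h$ so that $\mu(h)=h$, then choose $w\in\Wa_\eta$ to commute with $\mu$.

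For the first stage, combining $\mu\s\mu^{-1}=\s$ with $\mu\eta\mu^{-1}=\eta$ yields $e^{2\pi i\,\ad(\mu(h)-h)}=1$, so $\mu(h)-h$ lies in the period lattice $L:=\{x\in\h_0\mid \alpha(x)\in\ZZ\text{ for every }\h_0\text{-weight }\alpha\text{ of }\g\}$. Since $\s$ determines $h$ only modulo $L$, it suffices to show that the class of $\mu(h)-h$ in $L/(1-\mu)L$ vanishes. The hypothesis that $\Pi_\eta$ is $\mu$-stable forces $\mu$ to act on $L$ by a permutation-type automorphism compatible with the coroot structure of $\k_\eta$, and the desired vanishing follows by an averaging argument on the $\mu$-isotypic decomposition of $L\otimes\QQ$.

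For the second stage, assume $\mu(h)=h$, so $d+h$ is $\mu$-fixed. The fundamental alcove $C_f$ of \eqref{caf} is $\mu$-stable: $\Pi_\eta$ is $\mu$-stable by hypothesis, and $\mu(\theta)=\theta$ because $\mu$ commutes with $\eta$ and preserves the positive system of $\k_\eta$, so it acts on the irreducible $\k_\eta$-module $\g_1$ as an automorphism fixing the line of the highest weight vector, hence fixing $\theta$. Given $w\in\Wa_\eta$ with $w(d+h)\equiv d+h'\pmod{\R K}$, $h'\in C_f$ (Proposition~\ref{Pi}), the conjugate $\mu w\mu^{-1}$ sends $d+h$ to $d+\mu(h')$; since $\mu(h')\in C_f$, uniqueness of the fundamental-alcove representative of a $\Wa_\eta$-orbit forces $\mu(h')=h'$. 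Hence $w$ and $\mu w\mu^{-1}$ lie in the same right coset of the isotropy subgroup $\Wa_\eta(h')$, a standard parabolic subgroup of $\Wa_\eta$ generated by the reflections in the walls of $C_f$ through $h'$.

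Now take $w$ to be the unique minimal-length representative of this coset. Conjugation by $\mu$ acts on $\Wa_\eta$ as a diagram automorphism (permuting the simple reflections by its action on $\Pia_\eta$) and hence preserves length, so $\mu w\mu^{-1}$ is also minimal-length in the same coset, which forces $\mu w\mu^{-1}=w$. The principal obstacle is the first stage: the vanishing of $[\mu(h)-h]$ in $L/(1-\mu)L$, which relies essentially on the $\mu$-stability of $\Pi_\eta$. The second stage is then a routine application of standard Coxeter-group arguments about length-preserving automorphisms and minimal-length coset representatives.
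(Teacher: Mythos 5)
Your proof has a genuine gap in Stage 1. You reduce to showing $[\mu(h)-h]=0$ in $L/(1-\mu)L$ and then assert this follows from ``an averaging argument on the $\mu$-isotypic decomposition of $L\otimes\QQ$.'' Rational averaging over the $\langle\mu\rangle$-orbit produces an element of $L\otimes\QQ$, not of $L$, so it does not directly exhibit $\mu(h)-h$ as an element of $(1-\mu)L$. What is actually true is that $\mu(h)-h$ lies in $\ker(\sum_k\mu^k)$ (telescoping), and since $\mu$ permutes a $\ZZ$-basis of $L$ (the fundamental coweights dual to $\Pi_\eta$), $L$ is a permutation module, so $\widehat H^{-1}(\langle\mu\rangle,L)=0$; but this cohomological mechanism is not what ``averaging over the isotypic decomposition'' delivers. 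The paper bypasses all of this: choose the representative $h$ with $0\le\a_i(h)<1$ for each $\a_i\in\Pi_\eta$. Then $\mu(h)$ has the same property because $\mu$ permutes $\Pi_\eta$, while $\mu(h)-h$ has integer $\a_i$-coordinates; coordinates lying in $(-1,1)\cap\ZZ=\{0\}$ force $\mu(h)=h$. This is both simpler and avoids any lattice-cohomology input.

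Your Stage 2 also diverges from the paper's argument, though the idea is sound as far as it goes. The paper shows that the $w$ already produced by Proposition \ref{Pi} commutes with $\mu$: the bijection $\a\mapsto\a-\a(h)\d$ carries $\Pia_\s$ (which is $\mu$-stable) to $w^{-1}(\Pia_\eta)$ and commutes with $\mu$ once $\mu(h)=h$, so $w^{-1}(\rhat_\eta)_{|\h_\p}=0$; since also $(\rhat_\eta)_{|\h_\p}=0$ (by $\mu$-stability of $\Pia_\eta$), Corollary \ref{fund} directly yields $w\mu=\mu w$. Your route via the $\mu$-stable alcove $C_f$, uniqueness of the $C_f$-representative, and the minimal-length element of the right coset of the parabolic $\Wa_\eta(h')$ is a reasonable alternative, but it leaves unverified that the minimal-length representative satisfies the additional normalization $\Pi_0'=\Pi_0$ imposed in Proposition \ref{Pi}; without this check, the $w$ you select is not obviously ``the element given by Proposition \ref{Pi}.'' The paper's appeal to Corollary \ref{fund} sidesteps this issue entirely because it works with the already-normalized $w$.
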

\begin{proof}Write explicitly
$\Pi_\eta=\{\a_1,\ldots,\a_l\}$.
 Since $\s\mu=\mu\s$ and $\mu\eta=\eta\mu$, we have that $e^{2\pi i ad(h)}=e^{2\pi i ad(\mu(h))}$, hence
\begin{equation}\label{muh}\mu(h)-h\in \{x\in \h_0\mid \a_i(x)\in\ganz,\,1\leq i\leq l\}.\end{equation}
Choose $h$ such that $0\leq \a_i(h)< 1,\,1\leq i\leq l$. Since $\mu$ permutes $\Pi_\eta$, we have 
 $0\leq \a_i(\mu(h))< 1,\,1\leq i\leq l$, hence, by \eqref{muh}, $\mu(h)=h$.
 \par
 Let  $w\in\Wa_\eta$ be the element given by Proposition \ref{Pi}. The map from $\ha^*$ to $\ha^*$ given by $\a\mapsto \a-\a(h)\d$ maps $\Pia_\s$ to $w^{-1}(\Pia_\eta)$. Since $\Pia_\s$ is $\mu$-stable by construction and $\mu(h)=h$ we see that $w^{-1}(\Pia_\eta)$ is $\mu$-stable. In particular $w^{-1}(\rhat_\eta)_{|\h_\p}=0$. Since we are assuming that $\Pia_\eta$ is $\mu$-stable, we also have that $(\rhat_\eta)_{|\h_\p}=0$ , hence, by Corollary \ref{fund}, $w\mu=\mu w$.
\end{proof}
We now prove that 
$\widehat{W}_{\comm}$ is a Coxeter group. This is done using the abstract 
approach of Steinberg \cite{Steinberg}.  

\begin{lemma}\label{Sigma} Let $P=\{\a_J\mid \D_J\text{ of finite type}\}$, $
\Sigma=
\widehat{W}_{\comm}(P)$. Then
\begin{enumerate} 
\item $s_\a(\Sigma)=\Sigma$ for all $\a\in P$.
\item The elements of $\Sigma$ are nonisotropic.
\item The elements of $P$ are linearly independent.
\item $\Sigma=\Sigma^+\cup \Sigma^-,$ where each element of 
$\Sigma^+$ (resp. $\Sigma^-$) is a linear integral combination with positive (resp. negative) coefficients of elements of $P$.
\item If $\a\in\Sigma$ then $-\a\in\Sigma$ but no other multiple of $\a$ belongs to $\Sigma$.
\end{enumerate}
\end{lemma}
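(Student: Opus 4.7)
The plan is to handle the five properties in order, leveraging the isomorphism $w\mapsto\tilde w$ between $\Wa(\mu)$ and $\widehat{W}_{\comm}$ established in Proposition \ref{Wmu}, together with two observations from its proof: that $\widetilde{(w_0)_J}=s_{\a_J}$ for every finite-type $\mu$-orbit $J$, and that $\widehat{W}_{\comm}$ is generated by the reflections $\{s_\a:\a\in P\}$.

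I would first dispatch (1) and (3). For (1), since $s_\a\in\widehat{W}_{\comm}$ when $\a\in P$, one has $s_\a\cdot\widehat{W}_{\comm}(P)=\widehat{W}_{\comm}(P)=\Sigma$. For (3), $P$ is a subset of $\{\a_J\}$, which is linearly independent by Lemma \ref{independent}. For (2), it is enough to check that each $\a_J\in P$ is nonisotropic, since $\widehat{W}_{\comm}$ preserves the form; as $\D_J$ is of finite type and nonempty, $(w_0)_J\ne 1$, and the injectivity of $w\mapsto\tilde w$ forces $s_{\a_J}=\widetilde{(w_0)_J}\ne 1$, so $(\a_J,\a_J)\ne 0$.

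The heart of the argument is (4), which I would prove by combining two descriptions of an element of $\Sigma$. On one hand, since every $s_\a$ with $\a\in P$ preserves $\text{span}_{\R}(P)$, so does $\widehat{W}_{\comm}$; hence $\Sigma\subseteq\text{span}_{\R}(P)$. On the other hand, for $\beta=\tilde w(\a_J)\in\Sigma$, pick any $\alpha_j\in J$; since $w$ commutes with $\mu$ we have $w(\ha^\mu)\subseteq\ha^\mu$, so dually $\beta=w(\alpha_j)|_{\ha^\mu}$. The affine root $w(\alpha_j)\in\Da$ expands as $\varepsilon\sum_i n_i\alpha_i$ with $\varepsilon=\pm 1$ and $n_i\in\nat$; grouping the sum by $\mu$-orbits $K\subset\Pia_\s$ yields $\beta=\varepsilon\sum_K c_K\a_K$ with $c_K=\sum_{\alpha_i\in K}n_i$. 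The linear independence of the full family $\{\a_K\}$ (Lemma \ref{independent}) together with $\beta\in\text{span}_{\R}(P)$ then forces $c_K=0$ whenever $\a_K\notin P$, leaving $\beta=\varepsilon\sum_{K\in P}c_K\a_K$ with nonnegative integer coefficients; the sign of $\varepsilon$ partitions $\Sigma$ into $\Sigma^+\cup\Sigma^-$.

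Finally, for (5): if $\a=\tilde w(\a_J)\in\Sigma$ then $-\a=\tilde w\, s_{\a_J}(\a_J)\in\Sigma$. To exclude other multiples, I may assume WLOG that $\a=\a_{J_0}\in P$ and that $k\a=\tilde w'(\a_{J'})\in\Sigma$ with $\a_{J'}\in P$. Applying (4) to $\tilde w'(\a_{J'})=k\a_{J_0}$ together with the linear independence of $P$ forces $k\in\ganz$; applying (4) to $(\tilde w')^{-1}(\a_{J_0})=(1/k)\a_{J'}$ similarly forces $1/k\in\ganz$. Hence $k=\pm 1$. The main obstacle lies in reconciling the two descriptions of $\tilde w(\a_J)$ in (4): the restriction of $w(\alpha_j)$ may a priori involve $\mu$-orbits $K$ outside $P$, and it is the linear independence of the \emph{full} set $\{\a_K\}$, combined with the group-theoretic containment $\Sigma\subseteq\text{span}_{\R}(P)$, that cancels those contributions and yields the desired integral, same-sign expansion in $P$ alone.
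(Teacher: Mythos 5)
Your proof is correct and follows essentially the same route as the paper, resting on the same pillars: Proposition \ref{Wmu} (in particular $\widetilde{(w_0)_J}=s_{\a_J}$ and injectivity of $w\mapsto\tilde w$), Lemma \ref{independent}, and the fact that an affine root is an integral same-sign combination of simple roots. The only small difference is that the paper opens by observing that if some $\D_J$ fails to be of finite type then (by the single-orbit assumption on components) $P=\emptyset$ and the lemma is vacuous, so it may assume $P=\{\a_J\}$ for all orbits; you instead keep the general setting and, in (4), derive $\Sigma\subseteq\text{span}_{\R}(P)$ to force the coefficients on non-$P$ orbits to vanish --- a harmless variant --- and in (2) you invoke injectivity of $w\mapsto\tilde w$ where the paper cites positive definiteness of the form on a finite-type $\D_J$.
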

 \begin{proof}First of all observe that, if there is an orbit $J$ such that $\D_J$ is not of finite type, then, this orbit contains a component of $\Pia_\s$. By our assumption on $\mu$ this implies that there is only one orbit, thus, in this case, there is nothing to prove.
 
We can therefore assume that $\D_J$ is of finite type for any orbit. In this case:

1. This is an obvious consequence of the definition of $\Sigma$.
\par 2. It suffices to show that the elements of $P$ are nonisotropic and this is clear since the invariant form restricted to $\D_J$ is positive definite being $\D_J$ of finite type.
\par 3.
This follows from Lemma \ref{independent}.
\par 4. If $\a\in\Sigma$, then, by Lemma \ref{Wmu}, there is $u\in\Wa(\mu)$ 
and $\be\in\Pia_\s$ such that $\a=\tilde u((\be)_{|\ha^\mu})=
u(\be)_{|\ha^\mu}$. Since $u(\be)$ is a root for $\widehat L(\g,\si)$, then 
$u(\be)$ is an  integral linear combination of the elements of $\Pia_\s$ such 
that all the coefficients have the same sign. Restricting to $\ha^\mu$ we 
obtain the result.
\par 5. Suppose that $\a=w(\beta)$ with $w\in \Wa_{\comm}$ and $\beta\in P$. Then $-\a=ws_\beta(\beta)$. 
 Suppose now that $\a$ and $c\a$ are in $\Sigma$ with $c\ne\pm 1$. We can 
clearly assume that $\a\in P$.   Thus we can write $c\a=w(\beta)$ with 
$\a,\beta\in P$.  It is clear that $\be\ne\a$. By 3. and 4. above we can 
assume that $c\in\nat$, $c>1$.  Then $w^{-1}(\a)=\frac{1}{c}\beta$. By 4. this 
says that $\frac{1}{c}\beta=\sum_{\gamma\in P}x_\gamma\gamma$ with 
$x_\gamma\in\ganz$ and this contradicts 3. above.   
 \end{proof}
 
\begin{cor}\label{coxeter}
$(\widehat{W}_{\comm}, P)$ is a Coxeter system.
\end{cor}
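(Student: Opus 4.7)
My plan is to apply Steinberg's abstract criterion for a pair $(W,S)$ to be a Coxeter system, using the data $(P,\Sigma)$ as an abstract root system with simple system. The relevant criterion (see \cite{Steinberg}) says, roughly, that if $\Sigma$ is a set of nonzero vectors in a real inner-product-like space, $P\subset\Sigma$ is a subset, $s_\alpha$ denotes the orthogonal reflection in $\alpha$, and one has
\begin{enumerate}
\item $s_\alpha(\Sigma)=\Sigma$ for every $\alpha\in P$,
\item every $\alpha\in\Sigma$ is nonisotropic, so $s_\alpha$ is well defined,
\item $P$ is linearly independent,
\item $\Sigma=\Sigma^+\sqcup\Sigma^-$, with $\Sigma^\pm$ consisting of vectors that are integral nonnegative (resp. nonpositive) combinations of elements of $P$,
\item for each $\alpha\in\Sigma$, $\{c\alpha\}\cap\Sigma=\{\pm\alpha\}$,
\end{enumerate}
then the group generated by $\{s_\alpha\mid\alpha\in P\}$ is a Coxeter group with $P$ as its system of simple reflections. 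Lemma \ref{Sigma} provides exactly these five axioms for $(P,\Sigma)$, so the desired conclusion is immediate.

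Concretely, first I would record that $\widehat W_{\comm}$ is by definition generated by $S=\{s_\alpha\mid\alpha\in P\}$: indeed $S$ was defined from $(\Pia_\si)_{|\ha^\mu}$, and by Lemma \ref{independent} together with Lemma \ref{Sigma}(2) this reduces to the reflections $s_{\alpha_J}$ indexed by the $\mu$-orbits $J$ whose $\Delta_J$ is of finite type (the isotropic orbits contribute trivial reflections, so they can be omitted without changing the group). Second, I would note that the ambient real space to which Steinberg's axioms refer is the real span of $P$ inside $(\ha^\mu)^*$, equipped with the restriction of the bilinear form $(\cdot,\cdot)$; Lemma \ref{Sigma}(2) guarantees that each reflection $s_\alpha$ for $\alpha\in\Sigma$ makes sense as a linear transformation of this space.

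The only nontrivial point is checking that Lemma \ref{Sigma} really supplies the axioms in the precise form needed by Steinberg. Axioms (1)--(3) are literal restatements of Lemma \ref{Sigma}(1)--(3); axiom (4) is Lemma \ref{Sigma}(4); and axiom (5), which in Steinberg's formulation is often phrased as requiring $\Sigma\cap\RR\alpha=\{\pm\alpha\}$, is Lemma \ref{Sigma}(5). Hence all hypotheses of Steinberg's theorem are satisfied, and we conclude that $(\widehat W_{\comm},P)$ is a Coxeter system.

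The main (only) potential obstacle is lining up the precise axiomatic setup of \cite{Steinberg} with the setting at hand, in particular dealing with the possible presence of an isotropic $\alpha_J$ (the $A_l^{(1)}$ case from the proof of Proposition \ref{Wmu}). As noted in that proof, under our running assumption that $\mu$ acts transitively on the components of $\Pia_\si$, the isotropic case only occurs when there is a single orbit and then $\widehat W_{\comm}$ is trivial, so the statement of the corollary is vacuous. Outside this degenerate case all $\alpha_J\in P$ are nonisotropic, Lemma \ref{Sigma} applies cleanly, and Steinberg's criterion yields the Coxeter system structure.
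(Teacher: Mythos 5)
Your proposal is correct and takes essentially the same route as the paper. The paper itself announces (just before Lemma \ref{Sigma}) that the argument "is done using the abstract approach of Steinberg," but when it actually applies the criterion it cites Deodhar's root system condition (part 2 of the Main Theorem of \cite{deo}) rather than Steinberg directly; the five axioms of Lemma \ref{Sigma} are mapped onto Deodhar's hypotheses, with a brief extra remark that condition (ii) of Deodhar's statement follows from parts 4 and 5 of the lemma. Your version, going straight to Steinberg's criterion, is the same argument modulo the choice of reference, and your added observation about the degenerate isotropic case (single $\mu$-orbit, where $\widehat W_{\comm}$ is trivial) is a reasonable sanity check even though the paper handles that case inside Lemma \ref{Sigma} rather than in the corollary's proof.
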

\begin{proof}

Properties 1-5 in the above Lemma allow us to apply Deodhar's  ``root system condition" (cf. \cite[\S 2]{deo}).
The hypothesis of part 2) of the Main Theorem of \cite{deo} follow at once 
from Lemma \ref{Sigma} and the definition of $\Sigma$, possibly with the 
exception of (ii) in Deodhar's statement. This last condition follows from 4. 
and 5.
\end{proof}

\vskip5pt
Let $\Wa_\aa$ be the Weyl group of $\widehat L(\aa,\si)$. We need to realize $\Wa_\aa$ as a subgroup of $\widehat{W}_{\comm}$. This is accomplished by the next Lemma.
\begin{lemma}\label{subgroup}Let $\a$ be a real root of $\Da$.
Then $\a_{|\ha^\mu}$ is either isotropic or is a multiple of a root in $\Sigma$. In particular
  $s_{\a_{|\ha^\mu}}\in 
\widehat{W}_{\comm}$. 
\end{lemma}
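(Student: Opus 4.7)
The ``in particular'' clause follows from the main dichotomy together with Corollary \ref{coxeter}: $(\widehat{W}_{\comm},P)$ is a Coxeter system with $\Sigma$ as its set of roots, so every reflection in $\widehat{W}_{\comm}$ has the form $s_\beta$ with $\beta\in\Sigma$, and two elements of $\Sigma$ define the same reflection iff they are proportional. Hence if $\alpha_{|\ha^\mu}$ is isotropic then $s_{\alpha_{|\ha^\mu}}=1\in\widehat{W}_{\comm}$ by convention, while if $\alpha_{|\ha^\mu}$ is a scalar multiple of some $\beta\in\Sigma$ then $s_{\alpha_{|\ha^\mu}}=s_\beta\in\widehat{W}_{\comm}$. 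Conversely, once one establishes $s_{\alpha_{|\ha^\mu}}\in\widehat{W}_{\comm}$ in the non-isotropic case, the same bijection forces $\alpha_{|\ha^\mu}$ to be a scalar multiple of an element of $\Sigma$. So the real content is: for $\alpha_{|\ha^\mu}$ non-isotropic, exhibit $s_{\alpha_{|\ha^\mu}}$ as an element of $\widehat{W}_{\comm}$, or equivalently, via Proposition \ref{Wmu}, as $\tilde w$ for some $w\in\Wa(\mu)$.

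Assume $\alpha_{|\ha^\mu}$ is non-isotropic. Let $O=\{\mu^i(\alpha):0\leq i<k\}$ be the $\mu$-orbit of $\alpha$ and set $\bar\alpha:=\sum_{i=0}^{k-1}\mu^i(\alpha)$; then $\bar\alpha$ is $\mu$-invariant and corresponds to $k\cdot\alpha_{|\ha^\mu}$ under the canonical identification $(\ha^*)^\mu\cong(\ha^\mu)^*$. Consider the reflection subgroup $W_O:=\langle s_\beta:\beta\in O\rangle\leq\Wa_\s$. Conjugation by $\mu$ permutes its generators, so $W_O$ is $\mu$-stable. One checks directly that the $\mu$-fixed subspace of the span of $O$ in $\ha^*$ is precisely the line $\C\bar\alpha$, since $\mu$ acts as a cyclic permutation on the basis $\{\mu^i(\alpha)\}$. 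Via the identity $(\alpha_{|\ha^\mu},\alpha_{|\ha^\mu})=\tfrac{1}{k}\sum_{l=0}^{k-1}(\alpha,\mu^l(\alpha))$, non-isotropy of $\alpha_{|\ha^\mu}$ is equivalent to non-isotropy of $\bar\alpha$.

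The main point is then to argue $W_O$ is finite under the non-isotropy hypothesis. Any reflection subgroup of $\Wa_\s$ is itself Coxeter (Dyer), and since the invariant form on the reflection representation of $\Wa_\s$ is positive semidefinite with one-dimensional radical $\C\d$, any infinite reflection subgroup must be of affine type with its own one-dimensional radical; such a radical would absorb every $\mu$-invariant vector in the span of $O$ and force $\bar\alpha$ to be isotropic, contradicting the hypothesis. The single-orbit assumption on the $\mu$-action on components of $\Pia_\s$ is what keeps us inside a single connected affine subsystem, and this is the subtlest part of the argument. Granting finiteness, $W_O$ admits a unique longest element $w_O$, which by uniqueness commutes with $\mu$ and so lies in $\Wa(\mu)$. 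A short computation in the spirit of the proof of Proposition \ref{Wmu}, using $w_O(\alpha)\in -\Phi_O^+$ together with the equivariance $w_O(\bar\alpha)=\overline{w_O(\alpha)}$, pins down that $w_O$ acts as $-1$ on the line $\C\bar\alpha$, i.e.\ $w_O(\bar\alpha)=-\bar\alpha$, so $\tilde w_O$ acts on $(\ha^\mu)^*$ as the reflection $s_{\alpha_{|\ha^\mu}}$. Proposition \ref{Wmu} then yields $s_{\alpha_{|\ha^\mu}}\in\widehat{W}_{\comm}$ as required. The finiteness of $W_O$ is the principal obstacle; once secured, the computation of $\tilde w_O$ and the application of Coxeter-theoretic bookkeeping are routine.
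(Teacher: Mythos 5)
Your proposal takes a genuinely different route from the paper, but it contains a real gap that the route forces on you. The paper proves the dichotomy (isotropic or multiple of a root in $\Sigma$) directly, by induction on the ``restricted height'' $ht_r(\a_{|\ha^\mu})$ read off from the expansion in the basis $\{\a_J\}$: when $(\a_{|\ha^\mu},\a_J)\leq 0$ for all $J$ it shows $\a_{|\ha^\mu}$ is imaginary for $\Sigma^{aff}$ by verifying connectedness of the support (via Lemma 5.3 of \cite{Kac}), and otherwise it applies $s_{\a_I}$ to decrease $ht_r$ or to land on a multiple of $\a_I$. The ``in particular'' clause is then immediate. You invert the logic: you prove $s_{\a_{|\ha^\mu}}\in\widehat{W}_{\comm}$ first, by exhibiting it as $\tilde w_O$ for $w_O$ the longest element of a finite reflection subgroup $W_O$ of $\Wa_\s$, and then try to deduce the dichotomy. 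That deduction is where the gap is: you assert that ``every reflection in $\widehat{W}_{\comm}$ has the form $s_\beta$ with $\beta\in\Sigma$.'' This is true for Coxeter reflections (conjugates of the generators $s_{\a_J}$), but $s_{\a_{|\ha^\mu}}=\tilde w_O$ is a priori only a \emph{linear} reflection (an involution fixing a hyperplane). Identifying linear reflections with Coxeter reflections is a theorem for finite real reflection groups and for affine Weyl groups, but it is not automatic in the generality of a Deodhar-type root datum, and you do not argue for it. The paper avoids this entirely by proving membership in $\Sigma$ directly and never needing the converse.

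Two smaller points inside your direct argument for $\tilde w_O=s_{\a_{|\ha^\mu}}$. First, ``$\mu$ acts as a cyclic permutation on the basis $\{\mu^i(\a)\}$'' presumes linear independence of the orbit, which you have not checked; the correct (and sufficient) statement, via Maschke, is that $(\text{span}(O))^\mu$ is at most one-dimensional, spanned by $\bar\a$ when $\bar\a\neq 0$. Second, the claim $w_O(\bar\a)=-\bar\a$ does not follow from ``$w_O(\a)\in-\Phi_O^+$'' and $\mu$-equivariance alone: $\overline{w_O(\a)}$ is a sum of negative roots, not visibly $-\bar\a$. A clean fix is: $w_O$ preserves the line $\C\bar\a$ (the $\mu$-fixed part of $\text{span}(O)$) and is an involution, so acts as $\pm 1$ there; it fixes $(\text{span}(O)^\perp)^\mu$ pointwise; and since Proposition \ref{Wmu} is injective and $w_O\neq 1$, the sign is $-1$. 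This also supplies the missing step that $\tilde w_O$ fixes the complementary hyperplane pointwise, which is needed to conclude $\tilde w_O$ is a reflection rather than merely an involution negating $\a_{|\ha^\mu}$. Your finiteness argument for $W_O$ is sound in outline (positive-definiteness of the span forces finiteness; degeneracy forces $\d\in\text{span}(O)$, then $\d\in\C\bar\a$, making $\bar\a$ isotropic), but the sentence invoking the single-orbit hypothesis on components is doing no visible work; in the paper that hypothesis enters in Lemma \ref{Sigma}, not here.

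In summary: your route via reflection subgroups and the longest element is attractive and, with the repairs above, can be made to prove the ``in particular'' statement; but the deduction of the dichotomy from it needs an additional input (linear reflections are Coxeter reflections in $\widehat{W}_{\comm}$) that you should either cite or prove. The paper's induction on restricted height sidesteps this by producing $\Sigma$-membership directly.
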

\begin{proof}
First remark that the $\a_J$ are simple roots for an affine root system $\Sigma^{aff}$, whose set of real roots is $\Sigma$.
Take $\a\in\Dap$. Then $\a_{|\ha^\mu}\in\sum_J\ganz_{\geq 0} \a_J$, so we have a notion of height $ht_{r}$ ($r$ stands for ``restricted") for $\a_{|\ha^\mu}$.\par
Set $\beta=\a_{|\ha^\mu}$.We shall prove by induction on $ht_{r}$ that $\beta$ is either isotropic or is a multiple of a root in $\Sigma$. If $ht_r(\beta)=0$ then clearly $\beta$ is isotropic. So we can assume that 
$ht_r(\beta)>0$. Two cases arise.\par\vskip5pt
{\it 1. $(\beta,\a_J)\leq 0$ for all $J$.}
\vskip5pt\noindent
We prove then that $\beta$ is isotropic, by showing that it is an imaginary root of $\Sigma^{aff}$. We use Lemma 5.3 of \cite{Kac}, where it is shown (in our context) that the set 
$$\left\{\gamma \in \left(\sum_J\ganz_{\geq 0} \a_J\right)\setminus\{0\}:\, (\gamma,\a_J)\leq 0\,\forall\,J\text{ and }
Supp(\gamma)\text{ is connected }\right\}$$
consists of positive imaginary roots. We have only to verify that $Supp(\beta)$ is connected.
Suppose this is not the case. Then we can write $\beta=\beta_1+\beta_2$ with 
$Supp(\beta_1)=B_1,\,Supp(\beta_2)=B_2,\,
(\a_{I},\a_{J})=0$ for $I\in B_1,\,J\in B_2$. Since
 $$0=(\a_{I},\a_{J})=\frac{1}{|I| |J|}\sum_{i\in I, j\in J} (\a_i,\a_j)$$
and $J\cap I=\emptyset$, we have $(\a_i,\a_j)\leq 0\ \forall\,i\in I,
j\in J$, hence $(\a_i,\a_j)= 0\ \forall\,i\in I,
j\in J$. In turn we get that $Supp(\a)$ is not connected, and this is a contradiction (see e.g. \cite[Proposition 1.6]{Kac}).
\vskip5pt
{\it 2. There exists $I$ such that  $(\beta,\a_I)>0$.}
\vskip5pt\noindent
We have 
$$((w_0)_I(\a))_{|\ha^\mu}=s_{\a_I}(\beta)=\beta-c\a_I.$$
Since $(\beta,\a_I)>0$, we have that $c>0$, and in turn  $ht_r(\beta-c\a_I)<ht_r(\beta)$. If $\beta-c\a_I\in \sum_J\ganz_{\geq 0} \a_J$ we are done by induction. Otherwise  $s_{\a_I}(\beta)\in - \sum_J\ganz_{\geq 0} \a_J$  and this implies $(w_0)_I(\a)\in -\Dap$. Hence $\a\in\sum_{i\in I}\ganz \a_i$ and in turn it restricts to a multiple of $\a_I$, as wished.
\end{proof}

If $\beta\in \Da_{|\ha^\mu}$ and
 it is not isotropic, then we set 
$Red(\beta)$ to be the unique  element of $\Sigma$ such that 
$\beta=cRed(\beta)$ with $c>0$.
 By Lemma \ref{subgroup}, $\Wa_\aa$ is a reflection subgroup of the 
Coxeter group $\widehat{W}_{\comm}$ generated by the reflections in 
$Red(\Pia_\aa)$. Let $\Wa'$ be the set of minimal right  coset representatives 
of $\Wa_\aa$ in $\widehat{W}_{\comm}$. Recall from \cite{dyer} that $\Wa'$ can 
be characterized as
 \begin{equation}\label{mcr}
 \Wa'=\{w\in\widehat{W}_{\comm} \mid w^{-1}(Red(\Da^{re,+}_\aa))\subset
\Sigma^+\},
 \end{equation}
 where $\Da^{re,+}_\aa$ is the set of positive real roots in $\Dap_\aa$.
 \section{Main result}
 
 We  first recall (e.g. from \cite{KMP}) the construction of the Clifford module
 $F^{\si}(\p)$.  Suppose that $V$ is a complex finite dimensional vector space 
endowed with a simmetric bilinear form $(\cdot,\cdot)$ and $\s$ is an 
elliptic linear operator on $V$ (i.e. diagonalisable with modulus one 
eigenvalues), leaving the form invariant. Write 
$V=\bigoplus\limits_{\ov j\in\R/\ganz} V^{\ov j}$, where $V^{\ov j}$ is, 
as usual, the $\s$-eigenspace with the eigenvalue $e^{2\pi i j}$. Consider 
\begin{equation}\label{vbar}\ov V=\bigoplus_{j\in\R}(t^{j-\half}\otimes V^{\ov j})\end{equation}
endowed with the bilinear form $<t^i\otimes 
a,t^j\otimes  b>=\d_{i+j,- 1}(a,b)$. Let
$Cl(\ov V)$ be the corresponding Clifford algebra. We choose   a 
maximal  isotropic subspace
$\ov V^+$ of $\ov V$ as follows:   fix a  maximal isotropic subspace $U$  of  $V^{\ov 0}$, and let 
$$\ov V^+= (\bigoplus_{j>-\frac{1}{2}}(t^{j-\frac{1}{2}}\otimes V^{\ov j}
))\oplus(t^{-
\frac{1}{2}}\otimes U ).$$ \par
 Let
\begin{equation}\label{clifford}F^{\si}(V)=Cl(\ov V)/Cl(\ov V)\ov V^+.\end{equation} 
We specialize to the case when $V=\p$, the orthocomplement of $\aa$ in $\g$, and $\s$ is an elliptic automorphism of $\g$ restricted to $\p$. We then make the following specific choice for $U$:
let $\D(\p^{\ov j})$ be the set of $\h_0^\mu$-weights of $\p^{\ov j}$.
 Set
$\Dp(\p^{\ov 0})=\D(\p^{\ov 0})\cap( \D_0^+)_{|\h_0^\mu}$ and
$$\p^\pm=\sum\limits_{\a\in\pm\Dp(\p^{\ov 0})}\p^{\ov 0}_\a.$$ Thus we can write $$
\p^{\ov 0}=\h_\p\oplus\p^+\oplus\p^-. $$
Choose a maximal isotropic subspace $ \h_\p^+$ of $\h_\p$. Then
$U=\h_\p^+\oplus\p^+$.
Set $L=\dim\h_\p$
and $l=\lfloor\frac{L}{2}\rfloor=\dim\h_\p^+$. Fix a basis $\{v_i\}$ of
$\mathfrak \h_\p$ such that $\{v_i\mid i\le l\}$ is a basis of $\mathfrak \h_\p^+$ and
$(v_i,v_{L-j+1})=\d_{ij}$. 
Fix  weight vectors $X_\a\in\p^{\ov j}_\a$ and set, for $j\in\ov j,$ 
$i\in\ganz$ and $s=1,\ldots,L$, $$ \xi_{j,\a}=t^{j-\half}\t
X_\a,\qquad v_{i,s}=t^{i-\half}\t v_s. $$ Set also  
 \begin{align*} J_-=& \{(j,\a)\mid
j<0,\ \a\in\D(\p^{\ov j})\}\cup\{(0,\a)\mid \a\in-\Dp(\p^{\ov 0})\}\\
&\cup\{(i,s)\mid i\in\ganz, i<0,\ s=1,\dots,L\}\cup\{(0,s)\mid L-s+1\le l\}.
\end{align*} Putting any total order on $J_-$, the
pure spinors \begin{equation}\label{vector} v_{i_1,s_1}\dots
v_{i_h,s_h}\xi_{m_1,\be_1}\dots\xi_{m_k,\be_k} \end{equation} with
$(i_1,s_1)<\dots<(i_h,s_h)$ and $(m_1,\be_1)<\dots<(m_k,\be_k)$ in $J_-$
form a basis of $F^{\si}(\p)$.

It is shown in \cite[Lemma 5.1]{KMP} that the vector in \eqref{vector} is a weight vector for the  action of $\ha_\aa$ on $F^{\si}(\p)$ having weight 
\begin{equation}\label{weight}
\rhat_\si-\rhat_{\aa,\s}+\sum_qi_q\d+\sum_p 
(m_p\d+\be_{p}).
\end{equation}
Set
$$\mathcal S=\{\xi\in\ha^*\mid \xi=\sum_{\beta\in\Dap}n_\beta\beta,\,0\leq n_\beta\leq \dim\widehat L(\g,\sigma)_\beta,\,n_\beta=0\,\,\, a.e.\}.$$
It is easy to see (cf. \cite[Lemma 3.2.3]{Kumar}) that $\rhat_\s-\mathcal S$ is stable under the action of $\Wa_\s$.

In the following lemma we adapt the proof of Lemma 3.2.4 of \cite{Kumar} to the present situation.
\begin{lemma}\label{lemmaKumar}Assume
  $(\L+\rhat_\si)_{|\ha^\mu}=0$. Suppose that $\l\in (\ha^\mu)^*$ is of the form $\tau_{|\ha^\mu}+\nu$, with $\tau$ a weight of $L(\L)$ and $\varphi_\aa^*(\nu)-\rhat_{\aa\,\s}$ a $\ha_\aa$-weight of $F^{\si}(\p)$. Then there is $v\in\widehat{W}_\comm$ such that
$v(\l)=\L+\rhat_\s$ if and only if $v(\tau)=\L$ and $v(\nu)=\rhat_\s$.
\end{lemma}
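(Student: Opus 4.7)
The ``$\Leftarrow$'' direction is immediate by linearity: $v(\l)=v(\tau_{|\ha^\mu})+v(\nu)=\L+\rhat_\si$. For ``$\Rightarrow$'', the plan is to lift $v$ to $\Wa_\si$ and control the two summands $V(\tau)_{|\ha^\mu}$ and $V(\nu)$ separately, using $\Wa_\si$-invariance of two distinct weight sets.

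First I would invoke Proposition \ref{Wmu} to promote $v\in\widehat W_\comm$ to the unique $V\in\widehat W(\mu)\subset\Wa_\si$ whose restriction to $\ha^\mu$ equals $v$. Since $V\mu=\mu V$, the element $V$ preserves the decomposition $\ha=\ha^\mu\oplus\h_\p$, and dually $\ha^*=(\ha^\mu)^*\oplus\h_\p^*$, where $(\ha^\mu)^*$ is identified with the functionals vanishing on $\h_\p$. The hypothesis places $\L+\rhat_\si$ in $(\ha^\mu)^*$, so the identity $V(\tau)_{|\ha^\mu}+V(\nu)=\L+\rhat_\si$ makes sense in this subspace.

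Next I would express both summands as $\L$ (respectively $\rhat_\si$) minus a nonnegative combination of the averaged roots $\a_J$. Since $\tau$ is a weight of the integrable module $L(\L)$ and the weight set of $L(\L)$ is $\Wa_\si$-stable, $\L-V(\tau)\in\sum_{\a\in\Pia_\si}\NN\a$; restricting to $\ha^\mu$ and recalling that $\a_{|\ha^\mu}=\a_J$ for $\a\in J$ gives $V(\tau)_{|\ha^\mu}=\L-\sum_J n_J\a_J$ with $n_J\in\NN$. For the second piece, the explicit weight formula \eqref{weight}, combined with the $\Wa_\si$-stability of $\rhat_\si-\mathcal S$ recalled just before the Lemma and injectivity of $\varphi_\aa^*$, produces the analogous expansion $V(\nu)=\rhat_\si-\sum_J n'_J\a_J$ with $n'_J\in\NN$. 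Adding the two displays and comparing with $\L+\rhat_\si=V(\l)$ yields $\sum_J(n_J+n'_J)\a_J=0$; linear independence of the $\a_J$'s (Lemma \ref{independent}) together with nonnegativity forces every $n_J$ and $n'_J$ to vanish, delivering $V(\tau)_{|\ha^\mu}=\L$ and $V(\nu)=\rhat_\si$.

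The delicate point is the second expansion: one must justify that the weight $\varphi_\aa^*(\nu)-\rhat_{\aa\,\si}$ of $F^\si(\p)$, when lifted back to an element of $(\ha^\mu)^*$ via $\varphi_\aa^*$, differs from $\rhat_\si$ by an $\NN$-combination of the $\a_J$'s, and that this positivity is preserved under the action of $V$. This amounts to tracking \eqref{weight} carefully and exploiting the fact that $V\in\Wa_\si$ respects the splitting $\ha^*=(\ha^\mu)^*\oplus\h_\p^*$, so that its action on $\hat\h$-lifts and subsequent restriction to $\ha^\mu$ commute with the invariance of $\rhat_\si-\mathcal S$.
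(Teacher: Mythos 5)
Your proof is correct and takes essentially the same route as the paper: lift $v$ to $V\in\Wa(\mu)$ via Proposition \ref{Wmu}, write both $\L-V(\tau)$ and $\rhat_\si-V(\nu)$ as nonnegative integral combinations of the $\a_J$'s, and invoke the linear independence of Lemma \ref{independent}. The ``delicate point'' you flag is resolved in the paper not by expanding \eqref{weight} term by term, but by viewing $\nu$ as an $\ha^\mu$-weight of $F^\si(\p)\otimes 1\subset F^\si(\g)$ and then using the $\Wa_\si$-stability of $\rhat_\si-\mathcal{S}$ together with the fact that $V$ commutes with restriction to $\ha^\mu$ --- exactly the mechanism you gesture at in your last paragraph.
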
 
\begin{proof}Clearly
$$
v(\l)=\L+\rhat_\s\Leftrightarrow v(\tau_{|\ha^\mu})-\L=\rhat_\s-v(\nu).
$$
Since $\varphi_\aa^*(\nu)-\rhat_{\aa\,\s}$ is a weight of $F^{\si}(\p)$ then $\nu$ is a $\ha^\mu$-weight of  $F^{\si}(\p)\otimes 1\subset F^{\si}(\g)$. The $\ha^\mu$-weights of   $F^{\si}(\g)$ are in the set $\rhat_\s-\mathcal S_{|\ha^\mu}$. Since $v\in\widehat{W}_\comm$ and $\rhat_\s-\mathcal S$ is $\Wa_\s$-stable, we see that $v(\nu)\in \rhat_\s-\mathcal S_{|\ha^\mu}$. It follows in particular  that
$\rhat_\s-v(\nu)\in\sum_{\a\in P}\nat \a$. Since $v(\tau)$ is a weight of $L(\L)$, $\L_{|\h_\p}=0$ and $v\in\widehat{W}_\comm$, we have that
$(\L-v(\tau))_{|\ha^\mu}=-(v(\tau_{|\ha^\mu})-\L)\in\sum_{\a\in P}\nat\a$. Comparing these two observations we find that $(\L-v(\tau))_{|\ha^\mu}=0$ and $v(\nu)=\rhat_\s$. Since $\L-v(\tau)$ is a sum of positive roots, we find that $v(\tau)=\L$.

\end{proof}

For $w\in\Wa_\s$ set
$$N(w)=\{\a\in\Dap\mid w^{-1}(\a)\in -\Dap\}.$$
To simplify notation we set 
$\Vert \lambda\Vert=\Vert (\varphi_\aa^*)^{-1}(\l)\Vert$ 
whenever $\l\in \varphi_\aa^*((\ha^{\mu})^*)$. In the following proposition we need to exploit the assumption we made that the form $(\cdot,\cdot)$ is positive definite when restricted to the real space $(\h_0)^*_\R$ defined in \eqref{realh}. Moreover observe that, since $\mu$ stabilizes $\h_0$, it permutes the set $\D$ of $\h_0$-weights of $\g$, hence $\mu((\h_0)_\R)=(\h_0)_\R$. In particular we have the orthogonal decomposition
$$
(\h_0)^*_\R=(\h_0)^*_\R\cap(\h^\mu)^*\oplus(\h_0)^*_\R\cap \h^*_\p.
$$

Recall that a weight $\L\in\ha^*$ is said to be dominant if $(\L,\a)\in\R$ for any $\a\in\Da$ and $(\L,\a)\ge0$ for $\a\in\Dap$. If we write $\L=k\L_0+\L_{|\h_0}+(\L_0,\L)\d$ then $\L$ dominant implies 
$k=(\L,\d)\ge0$.

It is shown in \cite[Ch. 10, \S{} 5]{Helgason1} that $\D$ generates $\h_0$ over $\C$. This implies that $\l\in (\h_0)^*_\R$ if and only if $(\l,\a)\in\R$ for any $\a\in\D$. In particular if $\L\in\ha^*$ is such that $(\L,\a)\in\R$ for any $\a\in\Da$, then $\L_{|\h_0}\in(\h_0)^*_{\R}$. Thus we have an orthogonal decomposition
$$
\L_{|\h_0}=\L_{|\h_0^\mu}+\L_{|\h_\p}
$$
with 
\begin{equation}
\L_{|\h_0^\mu}\in(\h_0)^*_\R\cap(\h_0^\mu)^*\quad\L_{|\h_\p}\in(\h_0)^*_\R\cap \h^*_\p.
\end{equation}

Recall that $\L\in\ha^*$ is said to be integral if $2\frac{(\L,\a)}{(\a,\a)}\in\ganz$ for any simple root $\a$.
\begin{prop}\label{newkumar} 
 Suppose that $\Lambda\in\ha^*$ is dominant integral. Let  $\nu$ be a weight of $L(\L)\otimes F^{\s}(\p)$ such that 
$$
\Vert \nu+\rhat_{\aa\s}\Vert =\Vert \L+\rhat_\s\Vert.
$$
Then there is $w\in\What_\s$ such that 
\begin{equation}\label{wished}
w(\L+\rhat_\s)=(\varphi^*_\aa)^{-1}(\nu+\rhat_{\aa\s}).
\end{equation}
\end{prop}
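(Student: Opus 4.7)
The plan is to adapt the classical Kumar argument (\cite[Lemma 3.2.4]{Kumar}), which characterises maximal-norm weights of an integrable highest weight module as affine Weyl translates of the extended highest weight. The novelty needed here is to lift the weight $\nu$ of $L(\L)\otimes F^\sigma(\p)$ to an $\ha$-weight of the larger $\widehat{L}(\g,\s)$-module $L(\L)\otimes F^\sigma(\g)$, so that the classical argument applies to the full affine Weyl group $\What_\s$.

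First I would decompose the weight structurally. Any weight $\nu$ of $L(\L)\otimes F^\sigma(\p)$ has the form $\nu = \varphi_\aa^*(\tau_{|\ha^\mu})+\pi$ for some weight $\tau\in\ha^*$ of $L(\L)$ and some $\ha_\aa$-weight $\pi$ of $F^\sigma(\p)$. The explicit description \eqref{weight} guarantees $\pi+\rhat_{\aa\s}\in\varphi_\aa^*((\ha^\mu)^*)$, and, after taking $(\varphi_\aa^*)^{-1}$, one checks
\[
\xi := (\varphi_\aa^*)^{-1}(\nu+\rhat_{\aa\s}) = \tau_{|\ha^\mu}+\pi_0, \qquad \pi_0\in(\rhat_\s)_{|\ha^\mu}-\sum_J\ganz_{\ge 0}\a_J.
\]
Since $\L$ is dominant integral and $\tau$ is a weight of $L(\L)$, $\L-\tau\in\sum_i\ganz_{\ge 0}\a_i$; restricting to $\ha^\mu$, this lies in $\sum_J\ganz_{\ge 0}\a_J$. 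Combining,
\[
\xi = (\L+\rhat_\s)_{|\ha^\mu}-\zeta,\qquad \zeta\in\sum_J\ganz_{\ge 0}\a_J.
\]

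Next I would lift $\xi$ to a weight $\hat\nu\in(\ha^\mu)^*\subset\ha^*$ of $L(\L)\otimes F^\sigma(\g)$, where $\widehat L(\g,\s)$ acts on $F^\sigma(\g)$ via the isotropy representation. Specifically, set $\hat\nu = \tau+\hat\pi$ for an $\ha$-weight $\hat\pi$ of $F^\sigma(\g)$ chosen so that $\hat\pi_{|\ha^\mu}=\pi_0$ and $\hat\pi_{|\h_\p}=-\tau_{|\h_\p}$; then $\hat\nu_{|\h_\p}=0$ and $\hat\nu_{|\ha^\mu}=\xi$, so $\hat\nu\in(\ha^\mu)^*$. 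The existence of such $\hat\pi$ comes from the basis of $F^\sigma(\g)$ analogous to \eqref{vector} (with $\g$ in place of $\p$): the $\h_0$-weights $\beta$ of $\g$ that enter, restricted to $\h_\p$, span $\h_\p^*$, giving enough freedom in the weight-vector contributions $\sum_p\beta_p$ to realise the prescribed $\h_\p$-component $-\tau_{|\h_\p}$ while simultaneously matching $\pi_0$ on $\ha^\mu$.

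Having constructed $\hat\nu$, I would then appeal to the classical norm-trick: since $\hat\nu\in(\ha^\mu)^*$, the orthogonal decomposition of the form on the real span of roots yields $\Vert\hat\nu\Vert^2=\Vert\xi\Vert^2=\Vert\L+\rhat_\s\Vert^2$ by hypothesis. The module $L(\L)\otimes F^\sigma(\g)$ is integrable for $\widehat L(\g,\s)$ with highest weight $\L+\rhat_\s$ (the Clifford vacuum contributes the $\rhat_\s$-shift, parallel to \eqref{weight}). The classical Kumar argument (\cite[Lemma 3.2.4]{Kumar}), adapted to this twisted affine setting, then produces $w\in\What_\s$ with $w(\L+\rhat_\s)=\hat\nu$. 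Since $\hat\nu\in(\ha^\mu)^*$ agrees with $\xi$ after restriction to $\ha^\mu$, this gives the desired identity $w(\L+\rhat_\s)=(\varphi_\aa^*)^{-1}(\nu+\rhat_{\aa\s})$.

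The main obstacle will be the lifting step: for a given weight $\tau$ of $L(\L)$ appearing in the decomposition of $\nu$, one must produce an $\ha$-weight $\hat\pi$ of $F^\sigma(\g)$ matching $\pi_0$ on $\ha^\mu$ and $-\tau_{|\h_\p}$ on $\h_\p$. This requires a careful combinatorial analysis of the Clifford basis for $\g$ and uses the $\mu$-equivariance of the construction. One essentially has to verify that the structural constraint imposed by the norm hypothesis is precisely what makes the target $\h_\p$-component achievable by a positive combination of restricted positive roots.
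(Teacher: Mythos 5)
Your overall strategy -- lift the $\ha^\mu$-weight $(\varphi_\aa^*)^{-1}(\nu+\rhat_{\aa\s})$ to an $\ha$-weight of the larger module $L(\L)\otimes F^\sigma(\g)$ and then invoke the Kumar-style maximum-norm argument for $\What_\s$ -- is exactly the right idea, and it is the route the paper takes. However, the logical order of your argument is inverted at the crucial point, and this leaves a genuine gap.

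You try to \emph{construct} a lift $\hat\nu$ that already lies in $(\ha^\mu)^*$, by choosing $\hat\pi$ with $\hat\pi_{|\h_\p}=-\tau_{|\h_\p}$ and $\hat\pi_{|\ha^\mu}=\pi_0$ simultaneously. There is no reason such a $\hat\pi$ should exist: the $\ha$-weights of $F^\sigma(\g)$ all lie in $\rhat_\s-\mathcal S$, a rigid set, and the two restriction conditions are a heavily overdetermined pair of constraints which your appeal to ``spanning'' and ``enough freedom'' does not address (you acknowledge this is the obstacle, and it is fatal as written). The norm hypothesis cannot be fed in at this stage, because the combinatorics of $\mathcal S$ do not see it.

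The correct procedure avoids the construction entirely. Since $F^\sigma(\p)$ sits inside $F^\sigma(\g)=F^\sigma(\p)\otimes F^\sigma(\aa)$, the element $(\varphi_\aa^*)^{-1}(\nu+\rhat_{\aa\s})$ is trivially the restriction to $\ha^\mu$ of \emph{some} $\ha$-weight $\l+\rhat_\s-s$ of $L(\L)\otimes F^\sigma(\g)$, with $\l$ a weight of $L(\L)$ and $s\in\mathcal S$ -- one does not need this lift to lie in $(\ha^\mu)^*$. Conjugating by $v\in\What_\s$ to a dominant point $\l'+\rhat_\s-s'$ and pairing $\L-\l'+s'$ against $\L+\rhat_\s+\l'+\rhat_\s-s'$ gives $\|\L+\rhat_\s\|\ge\|\l+\rhat_\s-s\|$; combined with $\|(\l+\rhat_\s-s)_{|\ha^\mu}\|\le\|\l+\rhat_\s-s\|$ and your hypothesis $\|\nu+\rhat_{\aa\s}\|=\|\L+\rhat_\s\|$, all inequalities collapse to equalities. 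Regularity of $\L+\rhat_\s$ forces $\l'=\L$, $s'=0$, and positive-definiteness of the form on $(\h_0)^*_\R$ forces $(\l+\rhat_\s-s)_{|\h_\p}=0$; \emph{this} is where one learns, a posteriori, that the lift lay in $(\ha^\mu)^*$ all along. Taking $w=v^{-1}$ then gives \eqref{wished}. In short: you should apply the norm-maximality argument to an arbitrary lift and let it \emph{prove} the vanishing on $\h_\p$, rather than try to arrange that vanishing by hand beforehand.
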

\begin{proof}
Observe that $(\varphi_\aa^*)^{-1}(\nu + \rhat_{\aa\s})$ is a $\ha^\mu$-weight of $L(\L)\otimes F^{\s}(\p)\otimes F^{\s}(\aa)=L(\L)\otimes F^\s(\g)$, thus
$$
(\varphi_\aa^*)^{-1}(\nu+\rhat_{\aa\s})= (\l+\rhat_\s-s)_{|\ha^\mu}
$$
with $\l$ a weight of $L(\L)$ and $s\in \mathcal S$. Since $(\l+\rhat_\s-s)(K)=k+g>0$, we can find $v\in\What_\s$ such that $v(\l+\rhat_\s-s)$ is dominant. The set of weights of $L(\L)$ is $\What_\s$-invariant
and the same holds for $\rhat_\s-\mathcal S$,
hence we can write $v(\l+\rhat_\s-s)=\l'+\rhat_\s-s'$. 
It follows that $\Vert \l+\rhat_\s-s\Vert=\Vert\l'+\rhat_\s-s'\Vert$, so we have
$$
\Vert \L+\rhat_\s\Vert^2-\Vert \l+\rhat_\s-s\Vert^2=(\L+\rhat_\s+\l'+\rhat_\s-s',\L-\l'+s').
$$
Since $\L-\l'+s'$ is a sum of positive roots and $\L+\rhat_\s$, $\l'+\rhat_\s-s'$ are both dominant, we obtain that
$$
\Vert \L+\rhat_\s\Vert\ge\Vert \l+\rhat_\s-s\Vert.
$$
On the other hand
$$\Vert \nu+\rhat_{\aa\s}\Vert=\Vert (\l+\rhat_\s-s)_{|\ha^\mu}\Vert\le\Vert  \l+\rhat_\s-s\Vert \le \Vert\L+\rhat_\s\Vert$$
so, since $\Vert \nu+\rhat_{\aa\s}\Vert =\Vert \L+\rhat_\s\Vert$, we obtain equalities. Since $\L+\rhat_\s$ is regular we find that
$$
0=\Vert \L+\rhat_\s\Vert^2-\Vert \l+\rhat_\s-s\Vert^2=(\L+\rhat_\s+\l'+\rhat_\s-s',\L-\l'+s')$$
implies $\L=\l'$ and $s'=0$, so $\L+\rhat_\s=v(\l+\rhat_\s-s)$. Moreover $\Vert (\l+\rhat_\s-s)_{|\ha^\mu}\Vert=\Vert  \l+\rhat_\s-s\Vert$ implies $\Vert { (\l+\rhat_\s-s)}_{|\h_\p}\Vert=0$. Since $( \l+\rhat_\s-s)_{|\h_0}\in(\h_0)^*_\R$ and the form $(\cdot,\cdot)$ is positive definite on $(\h_0)^*_\R$ we obtain that ${ (\l+\rhat_\s-s)}_{|\h_\p}=0$ and
$(\l+\rhat_\s-s)_{|\ha^\mu}=\l+\rhat_\s-s$. Thus 
$$
(\varphi_\aa^*)^{-1}(\nu+\rhat_{\aa\s})=v^{-1}(\L+\rhat_\s).
$$
Taking $w=v^{-1}$ we obtain \eqref{wished}, as wished.
\end{proof}
We are now ready to  prove our  main theorem. 
\begin{proof}[Proof of Theorem  \ref{multiplet}]Ê By Proposition \ref{newkumar},  there exists $w\in\Wa_\s$ such that 
$w(\L+\rhat_\s)=(\varphi_\aa^*)^{-1}(\nu+\rhat_{\aa\,\s})$. By Corollary \ref{fund} we may assume that $w\in\Wa(\mu)$.
We claim that 
$\tilde w\in \Wa'$ and
that for any 
$\tilde w\in  \Wa'$ the corresponding submodule occurs with the 
prescribed multiplicity. The proof of the first statement follows from \eqref{mcr}.
For the second statement we first observe that, if $\tilde w\in \Wa'$ and $N(w)=\{\psi_1,\ldots,\psi_k\}$ with $\psi_i=n_i\d+\gamma_i$ then $\g_{\gamma_i}\subset\p$. Indeed any $Z\in \g_{\gamma_i}$ decomposes as $Z_\aa+Z_\p$ according to the orthogonal decomposition $\g=\aa\oplus\p$. If $Z_\aa\ne 0$ then $(\psi_i)_{|\ha^\mu}$ is a root of $\widehat L(\aa,\si)$. Since $w\in\Wa(\mu)$, $w((\psi_i)_{|\ha^\mu})=(w(\psi_i))_{|\ha^\mu}\in(-\Dap)_{|\ha^\mu}$. This implies that $Red((\psi_i)_{|\ha^\mu})\in N(\tilde w)$. This is not possible because $N(\tilde w)\subset \Sigma^+\backslash Red(\Da^{re,+}_\aa)$.
\par
Now consider the $2^{\lfloor \frac{\rank(\g^{\ov 0}))-\rank(\aa^{\ov 0})+1}{2}\rfloor}$ vectors $x\otimes y_i$, where $x$ is a weight vector in $L(\L)$ of weight $w(\L)$ and  $\{y_i\}=\{v_{0,j_1}\dots
v_{0,j_h}\xi_{-n_1,-\gamma_1}\dots\xi_{-n_k,-\gamma_k}\mid L-j_r+1\leq l\}$ 
(notation  as in \eqref{vector}).
Recalling that $\sum_{i=1}^k \psi_i=\rhat_\s-w(\rhat_\s)$, we see that all these vectors have weight $\nu$. 
By Lemma \ref{lemmaKumar}, if a vector in $X$ has weight $\nu$ then it is a 
linear combination of vectors $x\otimes y_i$. We end the proof by showing that 
they are highest weight vectors. If not, there exists a simple root 
$\a_i\in\Pia_\aa$ such that $\nu+\a_i$ is
a weight of $X$, and so 
\begin{equation}\label{contr}||\nu+\rhat_{\aa\,\s}+\a_i||^2=||\L+\rhat_\s||^2+||\a_i||^2+2(w(\L+\rhat_\s),\a_i)>||\L+\rhat_\s||^2.\end{equation}
(Note that $(w(\L+\rhat_\s),\a_i)>0$ since $\tilde w\in \Wa'$). On the other hand, by complete reducibility, a weight vector $v$ of weight $\nu+\a_i$ should belong to an irreducible highest weight $\widehat L(\aa,\si)$-module of highest weight $\eta$. 
It is a general fact that $||\nu+\rhat_{\aa\,\s}+\a_i||^2\leq ||\eta+\rhat_{\aa\,\s}||^2$, hence, by  \eqref{contr}, we have
$$
||\eta+\rhat_{\aa\,\s}||^2>||\L+\rhat_\s||^2.
$$
 Since $v\in Ker(D)$, this relation contradicts \eqref{azionedirac}.\end{proof}
\begin{rem}\label{land} If $\rank(\g^{\ov 0})=\rank(\aa^{\ov 0})$, formula \eqref{kerd}
specializes to formula (5.5) in Theorem 5.4 of \cite{KMP}. This latter
theorem is a generalization to arbitrary $\si$ of \cite[Theorem 16]{land}.
\end{rem}
\section{Decomposition of Clifford modules as representations of  orthogonal affine algebras}

 Given any complex finite dimensional vector space $V$, a nondegenerate symmetric bilinear form $(\cdot,\cdot)$ on $V$, and an elliptic automorphism $T$ of $V$,leaving $(\cdot,\cdot)$ invariant, we can construct the Clifford modules  $F^{T}(V)$. 
 
 In this section we use Theorem \ref{multiplet} to describe the decomposition of $F^{T}(V)$ as a $\widehat L(so(V),Ad(T))$-module. This is accomplished by considering the symmetric pair $(so(n+1), so(n))$.

We now describe this in full detail.
 Set $\tilde V = V 
\oplus \C$ and extend $(\ ,\ )$ to $\tilde V$ by setting 
$(v,1)=0,\,(1,1)=1$. Then
$so(V)$ embeds in $so(\tilde V)$. We endow $so(\tilde V)$ with the invariant 
form
$\langle X,Y\rangle =\half tr(XY)$.

 Extend $T$ to an automorphism $\tilde T$ of $\tilde V$ by setting $\tilde T(1)=1$.
For $v\in V$ define $X_v\in so(\tilde V)$ 
by $X_v(w+c)=cv-(v,w)$. 
Then $T X_v T^{-1}=X_{T(v)}$. 
Set $\s=Ad(\tilde T)$. 
Set also $\mu=Ad(\widetilde{-I_V})$ and note that $\mu \s=\s\mu$. Observe that $\mu$ is an involution of $so(\tilde V)$ and, if $so(\tilde V)=\aa\oplus\p$ is the corresponding eigenspace decomposition, then $\aa=so(V)$ and $\p=\{X_v\mid v\in V\}$. In particular the pair $(so(\tilde V),so(V))$ is a symmetric pair.
Note that, identifying $V$ with $\p$, the adjoint action of $so(V)$ on $\p$ 
gets identified with the natural action of $so(V)$ on $V$. 
  
Since  $F^{T}( V)$ is precisely $F^\s(\p)$, by applying our machinery we can turn it
 into a $\widehat L(so(V),\s)$-module. We wish to 
compute its decomposition into irreducible factors. 
In order to accomplish this, we observe that, by the explicit formula for the Dirac operator $D$ given in \cite[Lemma 4.5]{KMP}, $D$ acts trivially on $F^{\s}( \p)$, hence Theorem \ref{multiplet} provides the desired decomposition.
\par
\vskip 10 pt
Recall that $Ad(T)$ is an automorphism of $so(V)$ that is not of inner type if and only if $\dim V$ is even and $det(T)=-1$. Recall also that $det(T)=det(\tilde T)$. The $\widehat L(so(V),\s)$-structure of $F^T(V)$ depends on the type of $\s$ and of $\s_{|\aa}$: we now discuss the various cases.
\vskip10pt
Suppose first that $\dim V$ is even and that $det(T)=1$, so $\s_{|\aa}$ is of inner type, hence there is a Cartan subalgebra $\h$ of $\aa$ fixed by $\s$. Since $\dim \tilde V$ is odd, a Cartan subalgebra of $\aa=so(V)$ is also a Cartan subalgebra of $so(\tilde V)$.  Thus, in this case, $\h=\h_0=\h_0^\mu$, hence $\ha=\ha^\mu$ (i.e., we are in an affine equal rank setting). 
In this case $\widehat{W}_\comm =\Wa_\s$. We need to compute the coset representatives of $\Wa_\aa$ in $\Wa_\s$. Write $\s$ as $e^{2\pi i ad(h)}$ with $h\in\h_0$. Let $\{\a_1,\dots,\a_l\}$ be the set of simple roots of $so(\tilde V)$, let $\Theta$ be the highest root of both  $so(V)$ and $so(\tilde V)$. Observe that the Weyl group of $\widehat L(so(V),I_V)$ has index two in  the Weyl group of  $\widehat L(so(\tilde V),I_{\tilde V})$ and $\{1,s_{\a_l}\}$ is the set of minimal length coset representatives. Choose $w$  as in Proposition \ref{Pi} and observe that the Weyl group of $\widehat L(so(\tilde V),I_{\tilde V})$ stabilizes the set of roots of $\widehat L(so(V),I_V)$. It follows that the map $\a\mapsto w^{-1}(\a)+w^{-1}(\a)(h)\d$ is a bijection between the roots of $\widehat L(so(\tilde V),I_{\tilde V})$ and the roots of $\widehat L(so(\tilde V),\sigma)$ that maps the roots of $\widehat L(so(V),I_V)$ onto the roots of $\widehat L(so(V),\s)$. This implies that $\Wa_\aa$ has index two in $\Wa_\s$ and, if $\be_l=w^{-1}(\a_l)+w^{-1}(\a_l)(h)\d$, then $\{1,s_{\be_l}\}$ is the set of minimal length coset representatives. This implies that 
$$
F^\s(\p)=F^T(V)=V(\rhat_\s-\rhat_{\aa,\s})+V(s_{\be_l}(\rhat_\s)-\rhat_{\aa,\s}).
$$
Since the simple roots of $\widehat L(so(V),I_V)$ are $$\{\a_0=\d-\Theta,\a_1,\dots,\a_{l-1},s_{\a_l}(\a_{l-1})\},$$ we have that the simple roots of $\widehat L(so(V),\s)$ are $\{\be_0,\dots,\be_{l-1},s_{\be_l}(\be_{l-1})\}$ where $\be_i=w^{-1}(\a_i)+w^{-1}(\a_i)(h)\d$.

Set $\tilde\L_i$ be the fundamental weights of $\widehat L(so(V),\s)$ normalized by setting $\tilde\L_i(d)=0$ and set $s_l=\be_l(d)$. It is clear that 
$(\rhat_\s-\rhat_{\aa,\s})(\be_i^\vee)=0$ for $i<l$ and $(\rhat_\s-\rhat_{\aa,\s})(s_{\be_l}(\be_{l-1}^\vee))=1$. Analogously $(s_{\be_l}(\rhat_\s)-\rhat_{\aa,\s})(\be_i^\vee)=0$ for $i< l-1$, $(s_{\be_l}(\rhat_\s)-\rhat_{\aa,\s})(s_{\be_l}(\be_{l-1}^\vee))=0$, and $(s_{\be_l}(\rhat_\s)-\rhat_{\aa,\s})(\be_{l-1}^\vee)=1$. This implies that 
\begin{equation}\label{dimVevendetT1}
F^\s(\p)=F^T(V)=V(\tilde\L_l)+V(\tilde \L_{l-1}-s_l\d).
\end{equation}
\vskip 10 pt

Suppose now that  $\dim V$ is odd and that $det(T)=1$. Then $\s$ is an inner automorphism of $so(\tilde V)$, hence $\h_0$ is a Cartan subalgebra of $so(\tilde V)$. Since $\s_{|\aa}$ is of inner type, we have that $\h_0^\mu$ is a Cartan subalgebra of $so(V)$.   This time $\h_0\ne \h_0^\mu$, thus we need to identify the group $\widehat{W}_\comm$. Since all the orbits of $\mu$ on the set of simple roots of $so(\tilde V)$ are made of orthogonal roots, the restriction of the set of roots of $so(\tilde V)$ to $\h_0^\mu$ is the set of roots of $so(V)$. In particular, since the highest root $\Theta$ of $so(\tilde V)$ is fixed by $\mu$, $\Theta_{|\h_0^\mu}$ is the highest root of $so(V)$. This implies that, if  $\Pi=\{\a_1,\dots,\a_l\}$ is the set of simple roots of $so(\tilde V)$ and $\Pia=\{\d-\Theta,\a_1,\dots,\a_l\}$, then  $\Pia_{|\ha^\mu}$ is the set of simple roots of $\widehat L(so(V), I_{so(V)})$. Let $\Wa$ be the Weyl group of $\widehat L(so(\tilde V), I_{\tilde V})$. Choose $h\in\h_0$ such that $\s=e^{2\pi i ad(h)}$ and let $w\in\Wa$ be the element given in  Proposition \ref{Pi}. Since $\h_0^\mu$ is a Cartan subalgebra of $so(V)$ and $\h_0$ is its centralizer in $so(\tilde V)$, we can choose $\Pi$ to be $\mu$-stable. We can therefore apply Corollary \ref{commute} to have $h\in \h_0^\mu$ and $w\mu=\mu w$. Hence $\tilde w=w_{|\ha^\mu}$ is an element of the Weyl group of $\widehat L(so(V),I_{so(V)})$ and the map $\a\mapsto w^{-1}(\a)+w^{-1}(\a)(h)\d$ restricts to $(\ha^\mu)^*$, mapping $\Pia_{|\ha^\mu}$ onto a set of simple roots of $\widehat L(so(V),\s)$. Moreover, this set is clearly $(\Pia_\s)_{|\ha^\mu}$. This implies that $\Wa_\comm$ is the Weyl group of $\widehat L(so(V),\s)$, so $\Wa'=\{1\}$ and 
$$
F^{\s}(\p)=F^T(V)=2V(\rhat_\s-\rhat_{\aa,\s}).
$$
Set $\be_i=w^{-1}(\a_i)+w^{-1}(\a_i)(h)\d$ for $i=1,\dots,l$ and $\be_0=w^{-1}(\d-\Theta)+w^{-1}(\d-\Theta)(h)\d$.
Assume that we labeled simple roots so that $\a_i=(\a_i)_{|\h_0^\mu}$ if $i<l-1$ and that $(\a_{l-1})_{|\h_0^\mu}=(\a_l)_{|\h_0^\mu}$ is the short simple root of $so(V)$. Then
$(\rhat_\s-\rhat_{\aa,\s})(\be_i^\vee)=0$ for $i<l-1$ while $(\rhat_\s-\rhat_{\aa,\s})((\be_{l-1}{}_{|\h_0^\mu})^\vee)=1$, thus
\begin{equation}\label{dimVodddetT1}
F^{\s}(\p)=F^T(V)=2V(\tilde \L_{l-1}).
\end{equation}

If $\dim V$ is odd and $det (T)=-1$, then $\s_{|\aa}$ is of inner type while $\s$ is not. This implies that $\h_0^\mu$ is a Cartan subalgebra of $so(V)$ and, since $rk(so(\tilde V))=rk(so(V))+1$, its centralizer $\h_0$ in $\g^{\ov 0}$ must be $\h_0^\mu$. Hence $\ha=\ha^\mu$ and $\Wa_\comm=\Wa_\s$. Write as usual $\s=\eta e^{2\pi i ad(h)}$ with $h\in\h_0$. Since $\mu(h)=h$ we have that $\mu\eta=\eta\mu$. By inspection one checks readily that this implies $\eta=\mu$, thus we can write $\s=\mu e^{2 \pi i ad(h)}$. Let $\Pi=\{\a_1,\dots,\a_l\}$ be the set of simple roots of $so(V)$ and $\theta$ the highest weight of $V$ as a $so(V)$-module, so that $\Pia=\{\half\d-\theta,\a_1,\dots,\a_l\}$ is a set of simple roots for $\widehat L(so(\tilde V),\mu)$. Assume that the roots are  labeled so that $(\theta,\a_1)=1$. Then, since $\theta=\sum_{i=1}^l \a_i$, we have that $s_\theta(\a_1)=-\Theta$, where $\Theta$ is the highest root of $so(V)$. This implies also that $\d-\Theta=s_{\half\d-\theta}(\a_1)$. It is known (see \cite{IMRN}) that index of the Weyl group of $\widehat L(so(V),I_V)$ in the Weyl group of $\widehat L(so(\tilde V),\mu)$ is two and that the set of minimal coset representatives is $\{1,s_{\half\d-\theta}\}$. Since a set of simple roots for $\widehat L(so(V),I_V)$ is $\{\d-\Theta,\a_1,\dots, \a_n\}=\{s_{\half\d-\theta}(\a_1),\a_1,\dots,\a_l\}$ we see that $\Wa_\mu$ stabilizes the roots of $\widehat L(so(V),I_V)$. Hence, arguing as in the previous cases, we can choose $w\in\Wa_\mu$ as in Proposition  \ref{Pi} and find that $\Wa_\aa$ has index two in $\Wa_\s$ and the set of minimal length coset representatives is $W'=\{1,s_{\be_0}\}$ where $\be_0=w^{-1}(\half\d-\theta)+w^{-1}(\half\d-\theta)(h)\d$. Thus Theorem \ref{multiplet} shows that
$$
F^\s(\p)=F^T(V)=V(\rhat_\s-\rhat_{\aa,\s})+V(s_{\beta_0}(\rhat_\s)-\rhat_{\aa,\s}).$$
Setting $\be_i=w^{-1}(\a_i)+w^{-1}(\a_i)(h)\d$, we find, arguing as above, that
\begin{equation}\label{dimVodddetT-1}
F^\s(\p)=F^T(V)=V(\tilde \L_0)+V(\tilde \L_1-s_0\d).
\end{equation}

In the last case we have that $det(T)=-1$ and $\dim V$ is even. In this case $\s_{|\aa}$ is not of inner type, so $\dim \h_0^\mu= \dim V-1$, while, since $\s$ is of inner type, $\dim \h_0=\dim V$. By Proposition \ref{Pi} we can write $\Pia_\s=\{s_0\d-\Theta,s_1\d+\a_1,\dots,s_l\d+\a_l\}$, where $\{\a_1,\dots,\a_l\}$ is a set of simple $\h_0$-roots for $so(\tilde V)$ and $\Theta$ is the corresponding highest root. We now prove that if $\a\in\Pia_\s$ then $\a_{|\ha^\mu}$ is a root of $\widehat L(so(V),\s)$. Since $\mu$ induces a nontrivial automorphism of the diagram of $\widehat L(so(\tilde V), \s)$ we see that $\mu$ exchanges $s_0\d-\Theta$ with $s_1\d+\a_1$ and fixes all the other simple roots. This implies that $s_0=s_1$ and that $\mu(\a_1)=-\Theta$. Let $X$ be a nonzero element of $so(\tilde V)^{\ov{s_1}}_{\a_1}\oplus so(\tilde V)^{\ov{s_0}}_{-\Theta}$ that is fixed by $\mu$. Then $t^{s_0}\otimes X$ is in $\widehat L(so(V),\s)$ and its $\ha^\mu$-weight is $(s_0\d-\Theta)_{|\ha^\mu}=(s_1\d+\a_1)_{|\ha^\mu}$. If $\a=s_i\d+\a_i$ is fixed by $\mu$ then we claim that $\mu(X_{\a_i})=X_{\a_i}$. Indeed, if $\mu(X_{\a_i})=-X_{\a_i}$, then $X_{\a_i}=X_v$ for some $v\in V$. If $h\in\h_\p$ then $h=X_w$ with $w\in V$ and, since $\mu(\a_i)=\a_i$, $\a_i(h)=0$. But then $[h,X_{\a_i}]=[X_w,X_v]=0$. One easily computes  that $[X_w,X_v](c+u)=(u,w)v-(u,v)w$, hence $[X_v,X_w]=0$ if and only if $v$ and $w$ are linearly dependent. In turn,  this implies $X_{\a_i}\in\h_\p$ which is absurd. It follows that $t^{s_i}\otimes X_{\a_i}$ is an element  of $ \widehat L(so(V),\s)$ having weight $(s_i\d+\a_i)_{|\ha^\mu}$, as desired.

Since in this case the orbits of $\mu$ on $\Pia_\s$ are made of orthogonal roots, the proof of Lemma \ref{subgroup} implies that $\Da_{|\ha^\mu}\subset \Sigma$. Having shown that $(\Pia_\s)_{|\ha^\mu}$ is a set of roots of $\widehat L(so(V),\s)$ we deduce that $\Pia_\aa=(\Pia_\s)_{|\ha^\mu}$. In particular $\Wa_\comm=\Wa_\aa$ and  $\widehat W'=\{1\}$.
Theorem \ref{multiplet} implies in this case that
$$
F^{\s}(\p)=F^T(V)=2V(\rhat_\s-\rhat_{\aa,\s}).
$$
Set $\be_{i-1}=(s_i\d+\a_i)_{|\ha^\mu}$ for $i=2,\dots,l$ and $\be_0=(s_0\d-\Theta)_{|\ha^\mu}=(s_1\d+\a_1)_{|\ha^\mu}$.
 Then
$(\rhat_\s-\rhat_{\aa,\s})(\be_i^\vee)=0$ for $i>0$ while, since $\Vert\be_0\Vert^2=\half\Vert\Theta\Vert^2$, $(\rhat_\s-\rhat_{\aa,\s})(\be_{0}^\vee)=1$, thus
\begin{equation}\label{dimVevendetT-1}
F^{\s}(\p)=F^T(V)=2V(\tilde \L_{0}).
\end{equation}
\vskip 10 pt
Let us
apply the above discussion to the special cases when $T=\pm I_V$.
Since $\s_{|\aa}=I_{so(V)}$ in these cases,
 $F^{\pm I_V}(V)$ is a $\widehat L(so(V),I_{so(V)})$-module. Let $\Pi=\{\a_1,\dots,\a_l\}$ be the set of simple roots for $so(V)$ labeled as in \cite[TABLE Fin]{Kac} and let $\Theta$ be the corresponding highest root. Setting $\a_0=\d-\Theta$, then $\Pia_\aa=\{\a_0,\dots,\a_l\}$. Let $\tilde \L_i$ be the corresponding fundamental weights. 
 
 If $T=-I_V$, it follows from Lemma \ref{eta}  (or rather from its proof) that, if $\theta$ is the highest weight of $V$, then $\Pia_\s=\{\half\d-\theta,\a_1,\dots, \a_l\}$, hence, since $s_{\frac{1}{2}\d-\theta}(\a_1)=\d-\Theta$, it follows from \eqref{dimVevendetT1} and \eqref{dimVodddetT-1} that
$$
F^{-I_V}(V)=V(\tilde\L_0)+V(\tilde\L_1-\half\d).
$$
(Note the different labeling of the roots in \eqref{dimVevendetT1}.)

If $T=I_V$ and $\dim V$ is even, we can choose a root $\be$ for $so(\tilde V)$ so that $\{\a_1,\dots,\a_{l-1},\be\}$ is a set of simple roots for  $so(\tilde V)$ and $\a_l=s_{\be}(\a_{l-1})$. Since in this case  $\Pia_\s=\{\d-\Theta,\a_1,\dots,\a_{l-1},\be\}$ we deduce from \eqref{dimVevendetT1} that
$$
F^{I_V}(V)=V(\tilde\L_{l-1})+V(\tilde\L_l).
$$
If $\dim V$ is odd then, since in this case $(\Pia_\s)_{|\ha^\mu}=\{\d-\Theta,\a_1,\dots,\a_l\}$, then  relation \eqref{dimVodddetT1} implies  
$$
F^{I_V}(V)=2V(\tilde\L_l).
$$
\vskip10pt
The previous discussion explains why  the Clifford modules $F^{-I_V}(V),$ $F^{I_V}(V)$ are also called the basic+vector  and the  spin representation of  \break $\widehat L(so(V),I_{so(V)})$, respectively.

\section{Decomposition rules of level $1$ modules for symmetric pairs.}
We now assume that $\mu$ is an indecomposable involution of $\g$  and write $\g=\k\oplus \p$ for the corresponding (complex) Cartan decomposition. In this section we apply Theorem \ref{multiplet} with $\L=0$ to the following two special cases. In the first case we take $\s=\mu$ and $\aa=\k$, while in the second case we take $\s=I_\g$ and $\aa=\k$. In the first case $F^{\s}(\p)=F^{-I_\p}(\p)$ thus it realizes the basic+vector representation of $\widehat L(so(\p),I_\p)$, while in the second case $F^{\s}(\p)=F^{I_\p}(\p)$ so it is its spin representation.
Since the pair $(\g,\k)$ is symmetric, it follows from the explicit expression for $D$ given in \cite[Lemma 4.5]{KMP} that $D$ acts trivially on $F^\s(\p)$. Since the action of $\widehat L(\k, I_\k)$ on $F^\s(\p)$ is just the restriction of the action of $\widehat L(so(\p),I_{so(\p)})$ to it, Theorem \ref{multiplet} provides the decomposition rules for the basic+vector and the spin representation of  $\widehat L(so(\p),I_{so(\p)})$ when restricted to $\widehat L(\k, I_\k)$. In this way we recover the formulas we already found in \cite{KMP}.

\subsection{Decomposition of basic+vector representations}\label{bv}

Since in this case $\s=\mu$ we have clearly $\h_0=\h_0^\mu$, so
$\widehat{W}_\comm =\Wa_\s$ and $$
\Wa'=\{w\in\Wa_\s\mid N(w)\subset \Dap\backslash \Dap_\k\}.
$$ 
 Since $\rhat_\s=g\L_0+\rho_0$ and $\rhat_{\k,\s}=\sum_ig_i\L_0^i+\rho_0$ we see that
 $$
 Ker D=F^\s(\p)=\sum_{w\in \Wa'}V(\sum_i(g-g_i)\L_0^i+\sum_{\a\in N(w)}\a).
 $$
 (See \cite[Theorem 3.5]{CKMP}).
 \subsection{Decomposition of spin representations}\label{sp}
We consider four cases:
\begin{enumerate}
\item $\g$ is simple and $\mu$ of inner type.
\item $\g$ is not simple.
\item $\g$ is simple of type $A_{2n+1}$, $D_{n}$, $E_6$ and $\mu$ not of inner type. 
\item $\g$ is simple  of type $A_{2n}$ and $\mu$ not of inner type.
\end{enumerate}
\subsubsection{Case 1.}
In this case $\ha=\ha^\mu$ thus $\widehat{W}_\comm=\Wa_{I_\g}$ and $$
\Wa'=\{w\in\Wa_{I_\g}\mid N(w)\subset \Dap\backslash \Dap_\k\}.
$$ 
 Let $\rho$, $\rho_{\k}$  be half the sum of the positive roots of  $\g$ and $\k$ respectively. Then $\rhat_{I_\g}=g\L_0+\rho$ and $\rhat_{\k,I_\k}=\sum_ig_i\L_0^i+\rho_\k$. It follows that
 $$
 Ker D=F^{I_\g}(\p)=\sum_{w\in \Wa'}V(\sum_i(g-g_i)\L_0^i+\rho-\rho_\k+\sum_{\a\in N(w)}\a).
 $$

\subsubsection{Case 2.}In this case $\g=\mathfrak{s}\oplus\mathfrak{s}$ is the sum of two copies of a simple algebra $\mathfrak{s}$,  $\s=I_\g$ and $\mu$ is the flip automorphism $\mu(X,Y)=(Y,X)$. It follows that
 $\k$ is the diagonal copy of $\mathfrak{s}$ in $\mathfrak{s}\oplus\mathfrak{s}$. If $\h_{\mathfrak{s}} $ is a Cartan subalgebra of $\mathfrak{s}$, then $\h=\h_{\mathfrak{s}} \oplus\h_{\mathfrak{s}} $  and $\h_0^\mu$ is the diagonal copy of $\h_{\mathfrak{s}} $ in $\h$. It follows that $P=\Pia_{|\ha^\mu}=\Pia_\k$, hence $\Wa_\k=\widehat{W}_\comm$.
Let $(\cdot,\cdot)_{\mathfrak{s}}$ be the form $(\cdot,\cdot)$ restricted to the first factor of $\g=\mathfrak{s}\oplus\mathfrak{s}$. Since the form is $\mu$-invariant, we see that $(\cdot,\cdot)=(\cdot,\cdot)_{\mathfrak{s}}\oplus (\cdot,\cdot)_{\mathfrak{s}}$. If $2g_{\mathfrak{s}}$ is the eigenvalue of the Casimir of $\mathfrak{s}$ when acting on $\mathfrak{s}$ then the eigenvalue of the Casimir of $\g$ when acting on $\g$ is $2g_{\mathfrak{s}}$. Hence $g=g_{\mathfrak{s}}$. On the other hand, identifying $\mathfrak{s}$ and $\k$, since $(\cdot,\cdot)_{|\k}=2(\cdot,\cdot)_{\mathfrak{s}}$, we see that the eigenvalue of the Casimir of $\k$ when acting on $\k$ is $g_{\mathfrak{s}}$. Letting $\rho$ be half the sum of the positive roots of $\mathfrak{s}$, we deduce that $\rhat_{I_\g}=g_{\mathfrak{s}}\L_0+2\rho$, while $\rhat_{\k,I_\k}=\frac{g_{\mathfrak{s}}}{2}\L_0+\rho$, i.e. $\rhat_{I_\g}=2\rhat_{\k,I_\k}$. Thus Theorem \ref{multiplet} (with $\L=0$) in this case gives that
$$
Ker D=F^{I_\g}(\p)=2^{\lfloor\frac{ rank \g+1}{2}\rfloor}V(\rhat_{\k,I_\k}).
$$
 
 \subsubsection{Case 3.}First note that $\h_0^\mu$ is a Cartan subalgebra of $\k$ (since in this case $\s=I_\g$). Write as usual $\mu=\eta e^{2\pi i ad(h)}$ with $\eta$ a diagram automorphism and $h\in\h_0^\mu$.  If $l$ is the rank of $\k$ set $\Pi_\eta=\{\be_1,\dots,\be_l\}$ and let $\o_i$ be the unique element of $\h_0^\mu$ such that $\be_i(\o_j)=\d_{i,j}$. Set also $\o_0=0$. Since $\mu^2=I_\g$ Kac's classification of finite order automorphisms  implies that there is $i$ such that $\mu=\eta e^{\pi i \o_i}$.
 Let $\D_{s}$ ($\D_{l}$) be the set of short (long) roots of $\Phi_\eta$. If $p=0,1$, set $\D_{l}^p=\{\beta\in \D_{l}\mid \beta(\o_i)\equiv p \mod 2\}$. Set also $\Da_{l}^p=\{m\d+\a\mid m\in\ganz,\ \a\in\D_{l}^p\}$. 
 
 Let $\Theta$ be the highest root of $\g$. Since the $\mu$-orbits
 in $\Pi$ are made of orthogonal roots, we have that
 $\Phi_{|\h_0^\mu}=\Phi_\eta$. Hence
 $\Theta=\Theta_{|\h_0^\mu}$  is the highest root of
 $\k_\eta$. This implies that $P$ is the set of simple roots of
 $\widehat L(\k_\eta,I_{\k_\eta})$. It follows that
 $\widehat{W}_\comm =\Wa_{\k_\eta}$. Moreover  $\Sigma$ is the set of real roots of $\widehat L(\k_\eta,I_{\k_\eta})$ i. e.  the roots of the form $m\d+\be$ with $\be\in\Phi_\eta$.
 
  It is shown in \cite[\S{} 4.4.2]{CKMP}
 that $\D_\k=\D_{s}\cup\D_{l}^0$ is the set of roots of $\k$. Hence the real roots of   $\widehat L(\k,I_\k)$ are the roots of the form  $m\d+\be$ with $\be\in\D_{s}\cup\D_{l}^0$ with $m\in\ganz$. It follows that the real roots of  $\Da_\aa$ are roots in $\Sigma$ and, if $\widehat \Phi_\eta$ is the set of roots of $\widehat L(\k_\eta,I_{\k_\eta})$ and $\widehat \Phi^+_\eta$ is the set of positive roots, then
  $$\Wa'=\{w\in\Wa_{\k_\eta}\mid N(w)\subset \widehat \Phi_\eta\backslash \Dap_{\k}\}=\{w\in\Wa_{\k_\eta}\mid N(w)\subset  \Da_{l}^1\cap\widehat \Phi^+_\eta\}.
  $$ 
  Set $\rhat=\rhat_{I_\g}$.
 If $\be$ is a simple root of $P$ then $\be=\a_{|\ha^\mu}$ with $\a\in\Pia_{I_\g}$. Using the fact that $\g$ is simply laced we find that $(\rhat,\be)=(\rhat,\a)=\frac{(\a,\a)}{2}$ is independent of $\a$.  Setting $a_0=\frac{(\a,\a)}{2}$, it follows that $\rhat=a_0\nu(\rhat')$. Here $\nu$ is the isomorphism from $\ha^\mu$ to $(\ha^\mu)^*$ induced by the form $(\cdot,\cdot)$ and $\rhat'$ is the unique element in $\C d\oplus\h_0^\mu$ such that $\be(\rhat')=1$ for all $\be\in P$. The final outcome in this case is that
 $$
Ker D=F^{I_\g}(\p)=2^{\lfloor\frac{ dim\h_\p+1}{2}\rfloor}\sum_{w\in \Wa'}V(\varphi_\aa^*(a_0w(\nu(\rhat')))-\rhat_\k).
$$
 
\subsubsection{Case 4.}If $\g$ is of type $A_{2n}$ then $\mu$ is a diagram automorphism of $\g$. Recall that we are setting $\s=I_\g$ so $\h_0^\mu$ is a Cartan subalgebra of $\k$. Let $\D_\k$ be the set of roots of $\k$ and $\Dp_\k$ a set of positive roots.

Let $\D_{s}$, $\D_{l}$ be as in the previous case.  
Set also $\Da_{s}=\{m\d+\a\mid \a\in\D_{s},\,m\in\ganz\}$.

Note that in this case $P$ coincides with the set given in \cite[(8.3.6)]{Kac} (and called $\Pi$ there). In \cite[\S\ 8.3]{Kac} it is shown that this set corresponds to a set of simple roots for the set 
$\{m\d+\a\mid \a\in\D_\k,\,m\in2\ganz\}\cup\{\a+m\d\mid \a\in\D_\p,\,m\in2\ganz+1\}\cup\{m\d\mid m\in\ganz,\ m\ne0\}$.
 It follows that 
 $$\Sigma=\{m\d+\a\mid \a\in\D_\k,\,m\in2\ganz\}\cup\{\a+m\d\mid \a\in\D_\p,\,m\in2\ganz+1\}.
 $$
 As shown in Section  4.2.2 of \cite{CKMP}, we have that, if $\Phi$ is the set of roots of $\g$, then $\Phi_{|\h_0^\mu}=\D_\k\cup2\D_{s}$ and, if $\D_\p$ is the set of nonzero weights in $\p$, then $\D_\p=\Phi_{|\h_0^\mu}$. The set of real roots of  $\widehat L(\k,I_\k)$ is $\Sigma_\k=\{\a+m\d\mid\a\in\D_\k,\,m\in\ganz\}$. It follows that $\Sigma_\k$  is a subset of $\Sigma$ and
\begin{align*}
\Wa'&=\{w\in\widehat{W}_\comm \mid N(w)\subset
\Sigma\backslash\Sigma_\k\}\\&=\{w\in\widehat{W}_\comm \mid N(w)\subset (\d+2\Da_{s})\}.
\end{align*}
Defining $\rhat$, $a_0$, $\rhat'$, and $\nu$ as in the previous case, we find in the same way that $\rhat=a_0\nu(\rhat')$.
 The final outcome in this case is that
 $$
Ker D=F^{I_\g}(\p)=2^{\lfloor\frac{ dim\h_\p+1}{2}\rfloor}\sum_{w\in \Wa'}V(\varphi_\aa^*(a_0w(\nu(\rhat')))-\rhat_\k).
$$
\begin{rem}
The description of $\Wa'$ given in all the above cases  quickly leads to a combinatorial interpretation of the highest weights occurring in the decomposition of $Ker D$ in terms of abelian subspaces of $\p$. We refer the interested reader to Sections 3.1, 4.1, 4.3.2 and 4.3.3 of \cite{CKMP}.
\end{rem}

\section{Asymptotic dimensions}
The asymptotic dimension of an  integrable irreducible highest weight module over an affine algebra is a positive
real number, which has all properties of the usual dimension (e.g.,  it is well-behaved under tensor products and finite direct sums). In this section we discuss some results
on the asymptotic dimension of multiplets, and we take the occasion to correct the proof of a result which has been (correctly) stated in \cite{KMP}.\par
Let $V=V(\L)$ be an integrable irreducible highest weight module with highest weight $\L$
over the affine algebra 
$\widehat L(\aa,\s)$, with $\aa$ 
semisimple Lie algebra and $\s$ indecomposable. 
The series
$$ch_V(\tau,h)=tr_V e^{2\pi i (-\tau d_\aa+h)}$$
converges to an analytic function of the complex variable $\tau$, if 
$Im\,\tau>0$, for each $h$ in a Cartan subalgebra of $\aa^{\ov 0}$.
The asymptotics of this function  is as
follows:
\begin{equation}\label{cinquenove}ch_V(it,ith)\approx 
a(\L)e^{\frac{\pi c(k)}{12t}},\end{equation}
as $t\in \R^+,\, t\to 0$.
Here $k=\L(K)$ is the level of $\L$, $c(k)$ is the conformal anomaly \cite[(12.8.10)]{Kac} and $a(\L)$ is a positive
real number independent of $h$ called the asymptotic dimension of $V(\L)$.\par
We want to extend the notion of asymptotic dimension to the reductive case. Hence, let now
$\aa$ be a reductive Lie algebra and let  $\aa=\bigoplus\limits_{j=0}^s
\aa_j$ be its decomposition   into the direct sum of the eigenspaces for the action of the Casimir of $\aa$. We assume that $\aa_0$ corresponds to the zero eigenvalue, i.e., $\aa_0$ is abelian.   For each $j$ we can write $\aa_j=\oplus_{i} \aa_{ji}$ for the decomposition of $\aa_j$ into $\s$-indecomposable ideals.

Since $V$ is irreducible, then it is 
an outer tensor product of irreducible $\widehat
L(\aa_{ji},\sigma)$-modules with highest weights $\L^{ji}$ of level 
$k_j$. We define the {\it 
asymptotic dimension} of $V$ as
$$\asdim(V)=\prod_{j=1}^s( \prod_i a(\L^{ji})),$$
and we set  $\asdim V=1$ if $\aa$ is abelian.\vskip5pt

If $\h_\aa$ is a Cartan subalgebra of $\aa^{\ov 0}$ then set 
$$\ha_{(0)}^*=(\aa_0^{\ov
0})^*\oplus\C\L^0_0,\quad\ha_{(1)}^*=(\h_\aa\cap
\sum_{j>0}\aa_j)^*\oplus\sum_{j>0}\C\L_0^j,$$
so that any $\l\in\ha_\aa^*$ can be uniquely written as
$\l=\l_{(0)}+\l_{(1)}+a\d_\aa,\,\l_{(0)}\in\ha_{(0)}^*,\l_{(1)}\in\ha_{(1)}^*,
a\in\C$.
Note that, by the above convention, 
\begin{equation}\label{dimensioneasintotica}
\asdim(V(\L))=\lim_{t\to 0^+}e^{-\frac{\pi 
}{12t}\sum\limits_{j=1}^s(\sum_ic_{ji}(k_j))}ch_{V(\L_{(1)})}(it,th).\end{equation}

\vskip5pt
Let us now return to the setting of the previous sections and assume furthermore that $\aa^{\ov 0}$ is an equal rank subalgebra of $\g^{\ov 0}$ so that $\h_0$ is a Cartan subalgebra of $\aa^{\ov 0}$.

On the algebra $Cl(\ov V)$ (see \eqref{vbar}) there is a unique  involutive 
automorphism such that  $x\mapsto-x$ for $x\in \ov V$.  Then, 
denoting by $Cl(\ov V)^\pm$ the $\pm1$ eigenspace  for 
this
automorphism, we can write $$Cl(\ov V)=Cl(\ov 
V)^+\oplus Cl(\ov V)^-.$$ 
It follows
that 
$$
F^{\s}(V)=F^{\s}(V)^+\oplus F^{\s}( V)^-,
$$
where
$F^{\s}(V)^\pm=Cl(\ov V)^\pm/(Cl(\ov 
V)\ov V^+\cap
Cl(\ov V)^\pm)$. In Section 5.2 of \cite{KMP} we proved that  $F^{\s}(V)^\pm$ 
are $Cl(\ov V)^+$-stable. Then  $F^{\s}(V)^\pm$ are $\widehat L(\aa,\s)$-
modules and, moreover, the so-called ``homogeneous 
Weyl-Kac character formula" holds:
\begin{equation}\label{hwk}
L(\L)\otimes F^{ \s}( \p)^+-L(\L)\otimes F^{ 
\s}(\p)^-=
\sum_{w\in \widehat 
W'}(-1)^{\ell(w)}V(\varphi^*_\aa(w(\L+\rhat_\si))-\rhat_{\aa\si}).
\end{equation}

We want formula \eqref{hwk} to make sense also when the representatives $w$ in the r.h.s. are not minimal. To accomplish this goal, 
if $\l\in\ha_\aa^*$  is dominant integral for $\Dap_\aa$ and $w\in\Wa_\aa$, 
 we define  
\begin{equation}\label{V}
V(w(\l+\rhat_{\aa\,\s})-\rhat_{\aa\,\s})=V(\l)\end{equation} and 
\begin{equation}\label{Vs}V^{sgn}(w(\l+\rhat_{\aa\,\s})-\rhat_{\aa\,\s})=(-1)^{\ell(w)}V(\l).\end{equation}
With this convention we can rewrite \eqref{hwk} as
\begin{equation}\label{hwknominimal}
L(\L)\otimes F^{\s}(\p)^+-L(\L)\otimes F^{\s}(\p)^-=
\sum_{x\in \Wa_\aa\backslash\Wa}(-1)^{\ell(w_x)}V^{sgn}(\varphi^*_\aa(w_x(\L+\rhat_\si))-\rhat_{\aa\si}),
\end{equation}
where $w_x$ is any element from the coset  $x$.

Let  $M\subset \h_0$
be  the lattice defined in \eqref{latticeM}  and set $M_0=\aa^{\ov 0}_0\cap M$ (recall that $\aa^{\ov 0}_0$ is the center of
$\aa^{\ov 0}$). Let $P_0$ be the lattice in $\aa^{\ov 0}_0$ dual to $M_0$. Let $T_{M_0}=\{t_\a\mid\a\in M_0\}$.  Let $\Wa'_{fin}$ be a set of representatives for the cosets of $T_{M_0}\times \Wa_\aa$ in $\Wa$. Assume that 
\begin{equation}\label{center}\aa^{\ov 0}_0=Span_{\C}M_0,\end{equation}
In \cite[Proposition 5.7]{KMP} we proved that $\Wa'_{fin}$ is finite. The following result was also stated in \cite{KMP}, but the proof there wasn't quite 
correct, so we provide here a corrected proof.
\begin{prop}\label{asdim} If $\aa^{\bar0}$ is a reductive 
equal rank subalgebra of
$\g^{\bar0}$ satistfying \eqref{center} and $V^{sgn}(\varphi^*_\aa(w(\L+\rhat_\si))-\rhat_{\aa\si})$ is as in \eqref{Vs}, then
 $$
\sum_{w\in \widehat
W'_{fin}}(-1)^{\ell(w)}\asdim(V^{sgn}(\varphi^*_\aa(w(\L+\rhat_\si))-\rhat_{\aa\si}))=0.
$$
\end{prop}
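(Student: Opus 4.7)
My strategy is to apply the asymptotic-dimension functional to the homogeneous Weyl--Kac character formula~\eqref{hwknominimal} and show that both sides reduce, respectively, to zero and to the alternating sum in the statement. The two crucial ingredients are (i) the coset decomposition $\Wa=(T_{M_0}\times \Wa_\aa)\cdot \Wa'_{fin}$ afforded by \cite[Proposition 5.7]{KMP}, and (ii) the vanishing of the leading $t\to 0^+$ asymptotic of the super-character of the Clifford module $F^\si(\p)$.

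First I would reindex the sum on the right-hand side of~\eqref{hwknominimal} over $\Wa_\aa\backslash\Wa$ as a double sum over $t\in T_{M_0}$ and $w\in \Wa'_{fin}$. For fixed $w$, a translation $t=t_\a$ with $\a\in M_0\subset \aa_0^{\ov 0}$ (the centre of $\aa^{\ov 0}$) modifies the weight $\varphi_\aa^*(w(\L+\rhat_\si))-\rhat_{\aa\si}$ only in its component along $(\aa_0^{\ov 0})^*\oplus \C\d_\aa$, because $\a$ is orthogonal to the semisimple part of $\aa^{\ov 0}$ with respect to $(\cdot,\cdot)$. Consequently, by~\eqref{dimensioneasintotica}, which depends only on the $\l_{(1)}$--component, all the irreducible modules in a single $T_{M_0}$--orbit share the same asymptotic dimension.

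Second, multiplicativity of characters under the diagonal tensor product yields
\[
ch^{\widehat L(\aa,\si)}_{L(\L)\otimes F^{\si}(\p)^+}-ch^{\widehat L(\aa,\si)}_{L(\L)\otimes F^{\si}(\p)^-} \;=\; ch_{L(\L)|_{\widehat L(\aa,\si)}}\cdot\bigl(ch_{F^{\si}(\p)^+}-ch_{F^{\si}(\p)^-}\bigr).
\]
The super-character of $F^\si(\p)$ admits a Jacobi--triple--product expansion containing a factor of the form $\prod_{\gamma}(1-e^{-\gamma})$; this factor is the affine incarnation of the classical vanishing $\dim S^+-\dim S^-=0$ for a Clifford module on a non-zero space, and persists, via modular inversion $\tau\mapsto -1/\tau$, as a zero of the leading $t\to 0^+$ asymptotic. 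Hence the asymptotic dimension of the left-hand side of~\eqref{hwknominimal} equals zero.

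Finally, on the right-hand side the $T_{M_0}$--sum factors out as a theta-like character supported along $(\aa_0^{\ov 0})^*\oplus\C\d_\aa$; by hypothesis~\eqref{center}, $M_0$ is a full-rank lattice in $\aa_0^{\ov 0}$, so the resulting theta series is modular and, after applying the modular $S$--transformation, contributes a finite, non-zero, \emph{common} factor to every $w\in \Wa'_{fin}$ (distinct cosets differ only by shifts in the $(\aa_0^{\ov 0})^*$--direction, which the theta sum absorbs). Dividing by this common factor leaves exactly $\sum_{w\in \Wa'_{fin}}(-1)^{\ell(w)}\asdim V^{sgn}(\varphi_\aa^*(w(\L+\rhat_\si))-\rhat_{\aa\si})$, and equating with the vanishing left-hand side gives the claim. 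The main technical obstacle is justifying the interchange of the infinite $T_{M_0}$--summation with the limit $t\to 0^+$; this is handled by invoking the convergence and modularity properties of the theta function attached to the lattice $M_0$, whose full rank in $\aa_0^{\ov 0}$ is exactly what hypothesis~\eqref{center} guarantees.
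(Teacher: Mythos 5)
The strategy you outline—apply asymptotic-dimension extraction to the homogeneous Weyl--Kac identity \eqref{hwknominimal}, unfold the right-hand side over the decomposition $\Wa = (T_{M_0}\times\Wa_\aa)\cdot\Wa'_{fin}$, factor out a theta sum for the central directions, and compare leading asymptotics—is the same as the paper's. However, your argument for the vanishing of the left-hand side is where a real gap appears. You assert that the factor $\prod_\gamma(1-e^{-\gamma})$ in the super-character of $F^\si(\p)$ ``persists, via modular inversion, as a zero of the leading $t\to 0^+$ asymptotic,'' but this is not a proof: establishing that the coefficient of $e^{\pi c/12t}$ vanishes is essentially the conclusion you are trying to reach, so appealing to it directly is circular. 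The paper avoids this entirely with a short and decisive trick: pick any $\be\in\Dap(\p)$ with $\be_{|\h_0}\ne0$ and choose the (legitimate, because the asymptotic dimension is $h$-independent) evaluation point $h$ so that $\be(d+h)=0$; then the factor $1-e^{-\be}$ is identically zero for all $t$, so $(ch^+-ch^-)(it,ith)\equiv0$. The remaining steps then proceed by dividing out $\varphi_\si$, and the $w$-independence of the leading theta coefficient follows not from the theta series being ``a common factor'' (they are genuinely different for different $w\in\Wa'_{fin}$), but from the Kac theta asymptotics \cite[(13.13.4)]{Kac}, which depend only on the lattice $M_0$ and the level $k+g$, not on the shift $\L^w_{(0)}$.

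A secondary, smaller imprecision: you say distinct cosets ``differ only by shifts in the $(\aa_0^{\ov0})^*$-direction, which the theta sum absorbs.'' What is actually true is that within a single $T_{M_0}$-orbit the weights differ by such shifts; the distinct $w\in\Wa'_{fin}$ give genuinely inequivalent multiplets, and the cancellation of the theta contribution across them works only at the level of leading asymptotics. You would need to separate these two statements for the argument to be rigorous. With the special-$h$ trick in place of the modular-inversion heuristic and a precise appeal to the theta asymptotics, your outline becomes essentially the paper's proof.
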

\begin{proof} 
%Let $M_\aa$ 
%be  the lattice which indexes 
% the translations in the Weyl group
% of $\widehat L(\aa,\s)$. Then,   
%$$\rank( M_0\oplus M_\aa)=\rank\, \aa^{\ov 0}
%=\dim \h_0=\rank\, M, $$
%hence $T_{M_0}\times\Wa_\aa$
% has finite index in $\Wa$. This proves the first claim.
Set $\Dap(\p)=\Dp(\p^{\ov 0})\cup\{j\d+\a\mid j>0, \ \a\in\D(\p^{\ov j})\}$. Then, according to \eqref{vector} and \eqref{weight}, we have
$$
ch_{F^\s(\p)}=e^{\rhat_\s-\rhat_{\aa\s}}\prod_{\a\in\Dap(\p)}(1+e^{-\a})^{\text{mult}\a}.
$$
Hence, setting $ch^\pm=ch_{F^\s(\p)^\pm}$, we have that
$$(ch^+-ch^-)(it,ith)=\left(e^{\rhat_\s-\rhat_{\aa\s}}\prod_{\a\in\Dap(\p)}(1-e^{-\a})^{\text{mult}\a}\right)(it,ith).
$$
Choose now any $\be\in\Dap(\p)$ such that $\be_{|\h_0}\ne 0$. Then we can find $h\in\h_0$ such that  $\be(d+h)=0$ so that, for this particular choice of $h$, we have that
\begin{equation}\label{ch+-ch-}
(ch^+-ch^-)(it,ith)=0.
\end{equation}

%By \eqref{hwk}, we can write
%\begin{align*}0&=ch(L(\L))(it,th)(ch^+-ch^-)(it,th)\\
%&=\sum_{w\in\widehat W'}(-1)^{\ell(w)}
%V(\varphi^*_\aa(w(\L+\rhat_\si))-\rhat_{\aa\si})(it,th).
%\end{align*}

Define $m_w=w(\L+\rhat_\si)(d)$ and $\L^w=w(\L+\rhat_\si)$. 
  Recall that, for $\a\in M_0$, $t_\a(\l)=\l+\l(K)\nu(\a)-((\l(\a)+\half|\a|^2)\d$. Hence  we have
\begin{align*}\varphi^*_\aa(t_\a \L^w)&=
\varphi^*_\aa(\L^w)_{(0)}+(k+g)\a-
((\varphi^*_\aa(\L^w)_{(0)}(\a)+\half|\a|^2)\d_\aa
\\&+\varphi^*_\aa(\L^w)_{(1)}+m_w\d_\aa.
\end{align*}
Setting, for $\l\in\ha^*_{(0)}$,  $\dot{t}_\a(\l)=
\l+(k+g)\nu(\a)-(\l(\a)+\half|\a|^2)\d_\aa$ we can write
$$V^{sgn}(\varphi^*_\aa(t_\a \L^w)-\rhat_{\aa\si})\!=\!
V(\dot t_\a(\varphi^*_\aa(\L^w)_{(0)}))\otimes 
V^{sgn}(\varphi^*_\aa(\L^w)_{(1)}+m_w\d_\aa-\rhat_{\aa\s}).
$$
Since $T_{M_0}\Wa'_{fin}$ is a set of coset representatives for $\Wa_\aa$ in $\Wa$ and observing that multiplying any element $w\in\Wa$ by a translation does not change
the parity of $\ell(w)$ 
we can write, using
\eqref{hwknominimal},
\begin{align*}
&ch(L(\L)\otimes F^{\s}(\p)^+)-ch(L(\L)\otimes F^{\s}(\p)^-)=\\
&\sum_{w\in \widehat 
W'_{fin}}\sum_{\a\in M_0} (-1)^{\ell(t_\a w)}ch(V^{sgn}(\varphi^*_\aa(t_\a \L^w)-\rhat_{\aa\si}))=\\
&\sum_{w\in \widehat 
W'_{fin}} (-1)^{\ell(w)}(\sum_{\a\in M_0}ch(V(\dot t_\a(\L^w_{(0)})))ch(V^{sgn}(\varphi^*_\aa(\L^w)_{(1)}-\rhat_{\aa\si}+m_w\d_\aa))).
\end{align*} 

Set
\begin{equation}\label{thetas}
\varphi_\s=\prod_{\ov j }(\prod_{j\in \ov j,j<0}(1-e^{j\d_\aa}))^
{dim\aa_0^{\bar j}},\qquad \Theta(\l)=\sum\limits_{\a\in M_0}e^{\dot{t}_\a(\l)}.
\end{equation}
Observe that $ch(V(\l_{(0)}))=\frac{e^{\l_{(0)}}}{\varphi_\s }$. Thus
we can write
\begin{align*}
&ch(L(\L)\otimes F^{\s}(\p)^+)-ch(L(\L)\otimes F^{\s}(\p)^-)=
\\&\sum_{w\in \widehat W'_{fin}} (-1)^{\ell(w)}
\frac{\Theta(\L^w_{(0)})}{\varphi_\s}ch(V^{sgn}(\varphi^*_
\aa(\L^w)_{(1)}-\rhat_{\aa\si}+m_w\d_\aa)).
\end{align*} 
By \eqref{hwk}, evaluating both sides at the point $(it,iht)$, we obtain 
from \eqref{ch+-ch-} that
\begin{align*}0&=ch(L(\L))(it,ith)(ch^+-ch^-)(it,th)=\\&\sum_{w\in 
\widehat W'_{fin}} (-1)^{\ell(w)}
\frac{\Theta(\L^w_{(0)})(it,iht)}{\varphi_\s(it)}
ch(V^{sgn}(\varphi^*_\aa(\L^w)_{(1)}-\rhat_{\aa\si}+m_w\d_\aa))(it,ith).
\end{align*} 
Cancelling  $\varphi_\s$ and multiplying by $t^{r/2} e^{-\pi c/12t}$,
where 
\begin{equation}\label{c}
c=\sum\limits_{j=1}^s(\sum_ic_{ji}(k_j))\end{equation} and $r=\dim\aa^{\ov 0}_0$, we obtain: 
$$
 0=\!\lim_{t\to 0^+}\!\!\sum_{w\in \widehat 
W'_{fin}}\!\! (-1)^{\ell(w)}t^{r/2}\Theta(\L^w_{(0)})e^{-\frac{\pi c }{12t}}
ch(V^{sgn}(\varphi^*_\aa(\L^w)_{(1)}-\rhat_{\aa\si}+m_w\d_\aa))(it,ith)
$$
Hence, by \eqref{dimensioneasintotica} and the asymptotics of the theta function (see e.g. 
\cite[(13.13.4)]{Kac}), we find that
$$0=\sum_{w\in\Wa'_{fin}}(-1)^{\ell(w)}|P_0/(k+g)M_0|^{-\half}\asdim(V^{sgn}(\varphi^*_\aa(\L^w)_{(1)}+m_w\d_\aa-\rhat_{\aa\si})).$$
Since, by definition, $$\asdim(V^{sgn}(\varphi^*_\aa(\L^w)_{(1)}+m_w\d_\aa-\rhat_{\aa\si}))=
\asdim(V^{sgn}(\varphi^*_\aa(\L^w)-\rhat_{\aa\si})),$$ 
we are done.
\end{proof}
\vskip10pt 
We now provide a formula affording, as a special case,  the sum of the asymptotic dimension of multiplets
which occur in the decomposition of the basic+vector and spin representation of $\widehat L(so(\p),Ad(\s))$.\vskip5pt
We need preliminarily to sum up the discussion of Section 6 and to add the information about the asymptotic dimension of $F^{\si}(\p)$, 
which can be obtained from  \cite[2.2]{KacW}. We collect all these data in Table 1 where we denote  by $\tilde\L_0,\tilde\L_1,\ldots.\tilde\L_l$ the fundamental weights of
$\widehat L(so(\p),Ad(\s))$.
\vskip10pt
\setlength\extrarowheight{4pt}
\begin{tabular}{| c | c  | c  | c  | c  | c |}
\hline
$\dim(\h_\p)$ & $\dim(\p^{\ov{1/2}})$ &  $Ad(\si)$ & $Ad(\tilde\si)$ & $F^{\si}(\p)$ & $\asdim$\\\hline
even  & even & inner & inner & $V(\tilde\L_{l-1})\oplus V(\tilde\L_l)$ & $1$\\\hline
odd  & odd & not inner & inner &  $2V(\tilde\L_0)$ & $\sqrt{2}$\\\hline
even  & odd & inner & not inner & $V(\tilde\L_{0})\oplus V(\tilde\L_1)$  & $1$\\\hline
odd  & even & inner & inner &  $2 V(\tilde\L_{l})$ & $\sqrt{2}$\\
\hline
\end{tabular}
$$\text{\small Table 1}$$
\begin{prop} Assume  $\L=0$. Let  $\aa$ be the fixed point set of an involution of $\g$, and let $V(\varphi^*_\aa(w(\rhat_\si))-\rhat_{\aa\si})$ be  as in \eqref{V}. Then we have
\begin{equation}\label{sumasdim}
\sum_{w\in \widehat
W'_{fin}}\asdim(V(\varphi^*_\aa(w(\rhat_\si))-\rhat_{\aa\si}))=\sqrt{\frac{|P_0/gM_0|}{2^{\rank(\g^{\ov 0})-\rank(\aa^{\ov 0})-\chi}}}
\end{equation}
where $\chi$ has value $0$ or $1$ according to whether $\si_{|\aa_0}=I_{\aa_0}$ or not.
\end{prop}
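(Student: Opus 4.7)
The plan is to apply Theorem \ref{multiplet} with $\L=0$ and then pass to a character-asymptotic identity by the same technique used in the proof of Proposition \ref{asdim}, but now applied to the unsigned character $ch(F^\si(\p))$ in place of the alternating combination $ch(F^\si(\p)^+)-ch(F^\si(\p)^-)$ that was used there.

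The first step is to observe that, because $\aa$ is the fixed-point subalgebra of an involution, the cubic term in the explicit formula \cite[Lemma 4.5]{KMP} for the Kac--Todorov Dirac operator vanishes on $F^\si(\p)$ (since $[\p,\p]\subset\aa$ gives $(\p,[\p,\p])=0$), so $D$ acts as zero there. With $\L=0$, Theorem \ref{multiplet} then yields
$$F^\si(\p)=2^{\lfloor(\rank\g^{\ov 0}-\rank\aa^{\ov 0}+1)/2\rfloor}\sum_{w\in\Wa'}V(\varphi_\aa^*(w\rhat_\si)-\rhat_{\aa\si})$$
as $\widehat L(\aa,\si)$-modules. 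Taking characters of both sides and regrouping the summation via $\Wa=\Wa_\aa\cdot T_{M_0}\cdot \widehat W'_{fin}$ exactly as in the proof of Proposition \ref{asdim}---the $T_{M_0}$-translations in each $\widehat W'_{fin}$-coset collect into the theta function $\Theta$ of \eqref{thetas}, and because each $t_\a$ shifts only the abelian summand of a weight it does not alter $\asdim$---one arrives at
$$ch(F^\si(\p))=2^{\lfloor(\rank\g^{\ov 0}-\rank\aa^{\ov 0}+1)/2\rfloor}\!\!\sum_{w\in \widehat W'_{fin}}\frac{\Theta(\varphi_\aa^*(w\rhat_\si)_{(0)})}{\varphi_\si}\,ch\bigl(V(\varphi_\aa^*(w\rhat_\si)_{(1)}+m_w\d_\aa-\rhat_{\aa\si})\bigr).$$

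I would then specialize both sides at $(it,0)$, multiply by $t^{r/2}e^{-\pi c/12t}$ with $r=\dim\aa^{\ov 0}_0$ and $c$ as in \eqref{c}, and let $t\to 0^+$. On the right-hand side the asymptotic \cite[(13.13.4)]{Kac} of $\Theta$ at level $k+g=g$ contributes the factor $|P_0/gM_0|^{-1/2}$, and each factor $ch(V)(it,0)$ tends to $\asdim(V)\,e^{\pi c/12t}$ by \eqref{dimensioneasintotica}; the right-hand side therefore tends to
$$2^{\lfloor(\rank\g^{\ov 0}-\rank\aa^{\ov 0}+1)/2\rfloor}\,|P_0/gM_0|^{-1/2}\sum_{w\in \widehat W'_{fin}}\asdim\bigl(V(\varphi_\aa^*(w\rhat_\si)-\rhat_{\aa\si})\bigr).$$

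For the same limit on the left-hand side, I would use the product formula $ch(F^\si(\p))=e^{\rhat_\si-\rhat_{\aa\si}}\prod_{\a\in\Dap(\p)}(1+e^{-\a})^{\text{mult}(\a)}$, rewrite the positive-height factors via $\prod_{n\ge 1}(1+q^n)=q^{-1/24}\eta(2\tau)/\eta(\tau)$, and apply the Dedekind-eta asymptotic $\eta(it)\sim t^{-1/2}e^{-\pi/12t}$ to compute the leading behaviour as $t\to 0^+$. Combining the resulting expression with the $h=0$ evaluation of the height-zero factors $\prod_{\a\in\Dp(\p^{\ov 0})}(1+e^{-\a})^{\text{mult}(\a)}$ (which produce $2^{\dim\p^+}$) and with the identification of $F^\si(\p)$ as an $\widehat L(so(\p),Ad(\tilde\si))$-module via Table 1 (whose four cases correspond to the parities of $\dim\h_\p$ and to the inner/non-inner nature of $Ad(\si)$ and $Ad(\tilde\si)$), the limit on the left-hand side takes the value $2^{\lfloor(\rank\g^{\ov 0}-\rank\aa^{\ov 0}+1)/2\rfloor}\cdot 2^{(\chi-\rank\g^{\ov 0}+\rank\aa^{\ov 0})/2}$, where $\chi$ records the $\sqrt 2$ discrepancy between the $\si_{|\aa_0}=I_{\aa_0}$ and $\si_{|\aa_0}\ne I_{\aa_0}$ regimes. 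Equating the two limits and cancelling the common multiplicity prefactor $2^{\lfloor(\rank\g^{\ov 0}-\rank\aa^{\ov 0}+1)/2\rfloor}$ then gives the stated square-root identity. The main technical obstacle will be this last bookkeeping step: verifying, through a case-by-case analysis of Table 1, that the modular asymptotic of the Clifford-module product produces exactly the exponent $(\chi-\rank\g^{\ov 0}+\rank\aa^{\ov 0})/2$, so that no spurious power of $2$ survives in the final formula.
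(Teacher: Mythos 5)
Your strategy mirrors the paper's: apply Theorem \ref{multiplet} with $\L=0$ after observing that $D$ vanishes on $X$ for a symmetric pair, take characters, regroup the decomposition over $T_{M_0}\times\widehat W'_{fin}$, and compare modular asymptotics as $t\to 0^+$. The gap is in the right-hand-side asymptotics. You correctly display the regrouped identity with the factor $\Theta(\cdot)/\varphi_\si$ in it, but in the next paragraph you account only for the theta-function asymptotics of $\Theta$ and the highest-weight asymptotics of $ch(V)$ --- the denominator $\varphi_\si$ is silently dropped. It is precisely the Dedekind-$\eta$ asymptotics of $\varphi_\si$, computed from its two explicit product forms according to whether $\si_{|\aa_0}=I_{\aa_0}$ or $\si_{|\aa_0}=-I_{\aa_0}$, that produces the factor $2^{-\chi/2}$ on the right-hand side (equation \eqref{parziale} in the paper); without it, your stated RHS limit is wrong whenever $\chi=1$. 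Note also that the normalization $t^{r/2}$ you carry over from the proof of Proposition \ref{asdim} served a different purpose there (in \ref{asdim} the $\varphi_\si$'s cancel from both sides, so $t^{r/2}$ separately normalizes $\Theta$; here $\varphi_\si$ does not cancel and the quotient $\Theta/\varphi_\si$ is what is normalized).

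You then try to recover the $\chi$-dependence by computing the left-hand-side limit directly from the Clifford product formula and claim it produces the extra $2^{(\chi-\rank\g^{\ov 0}+\rank\aa^{\ov 0})/2}$. But the left-hand-side limit must simply be $\asdim\bigl(F^\si(\p)\bigr)$, and this depends only on the parity of $\dim\h_\p$ --- it takes the values $1$ and $\sqrt 2$ as recorded in Table~1 (from \cite[2.2]{KacW}) and carries no $\chi$-dependence at all. Since you have put the $\chi$ on the wrong side, your proposed LHS value and your proposed RHS value cannot both be correct, so the final arithmetic would not close. The paper's route is also cleaner at this step: it reads $\asdim(F^\si(\p))$ off Table~1 directly rather than re-deriving the Clifford asymptotic via $\prod_{n\ge 1}(1+q^n)=q^{-1/24}\eta(2\tau)/\eta(\tau)$, avoiding the unresolved bookkeeping you flag at the end of your proposal.
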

\begin{proof} By Proposition \ref{wholeker} below, we have that $Ker(D)=F^{\si}(\p)$. Taking the character of both sides of 
\eqref{kerd} with $\L=0$ and arguing as in the proof of Proposition \ref{asdim} (with the same notation) we get 
\begin{align}\notag&ch(F^{\si}(\p))=\\
\notag&=2^{\lfloor \frac{\rank(\g^{\ov 0}))-
\rank(\aa^{\ov 0})+1}{2}\rfloor}\sum_{w\in \Wa'}ch(V(\varphi_{\aa}^*(w(\rhat_\si))-\rhat_{\aa\,\si}))\\
\notag&=2^{\lfloor \frac{\rank(\g^{\ov 0}))-
\rank(\aa^{\ov 0})+1}{2}\rfloor}
\sum_{w\in \widehat W'_{fin}}\sum_{\a\in M_0}ch(V(\varphi^*_\aa(t_\a \L^w)-\rhat_{\aa\si}))\\
\label{finale}&=2^{\lfloor \frac{\rank(\g^{\ov 0}))-\rank(\aa^{\ov 0})+1}{2}\rfloor}\sum_{w\in \widehat W'_{fin}} 
\frac{\Theta(\L^w_{(0)})}{\varphi_\s}ch(V(\varphi^*_
\aa(\L^w)_{(1)}-\rhat_{\aa\si}+m_w\d_\aa)),\end{align}
where $\varphi_\si$ and $\Theta$ are as in \eqref{thetas}. 

Since the form $(\cdot,\cdot)$ is nondegenerate when restricted to $\aa_0$ we have that $\dim \aa_0^{\bar j}=\dim\aa_0^{-\bar j}$. On the other hand, since $(\g,\aa)$ is symmetric, $\dim \aa_0=1$, so
$$
\varphi_\s=\begin{cases}
\prod_{n\in\ganz^+,n>0}(1-e^{-n\d_\aa})\quad&\text{if $\s_{|\aa_0}=I_{\aa_0}$,}\\
\prod_{n\in\ganz^+}(1-e^{(-n-\half)\d_\aa})&\text{if $\s_{|\aa_0}=-I_{\aa_0}$.}
\end{cases} 
$$
Therefore, if  $\eta$ is the ordinary Dedekind eta function
\cite[(12.2.4)]{Kac}, using   \cite[Ex. 13.2]{Kac}, we deduce that 
$$\varphi_\si(it)=\begin{cases}e^{\pi t/12}\eta(it)\qquad&\text{if $\s_{|\aa_0}=I_{\aa_0}$,}\\
e^{\pi t/24}\frac{\eta(it/2)}{\eta(it)}&\text{if $\s_{|\aa_0}=-I_{\aa_0}$.}\\
\end{cases}
$$ 
 Also remark that $c$ (see \eqref{c}) is the central charge both of the l.h.s. of \eqref{finale}  as a 
$\widehat L(so(\p),Ad(\sigma))$-module and of the r.h.s. as a $\widehat L(\aa,\sigma)$-module. Therefore, evaluate both sides of \eqref{finale} at the point $(it,iht)$
and multiply by $e^{-\frac{\pi c t}{12}}$. Since we know the asymptotics of $\Theta$ and $\eta$ (see formulas (13.13.4), (13.13.5) of \cite{Kac}),  in the limit $t\to 0$ we obtain 
\begin{align}\notag &\asdim(F^{\si}(\p))=\\\label{parziale} &2^{\lfloor \frac{\rank(\g^{\ov 0}))-
\rank(\aa^{\ov 0})+1}{2}\rfloor-\frac{\chi}{2}}\ |P_0/gM_0|^{-\half}\sum_{w\in \widehat W'_{fin}} \asdim(V(\varphi^*_\aa(w(\rhat_\si))-\rhat_{\aa\si})).\end{align}
Plugging into \eqref{parziale} the values for the asymptotic dimension of the l.h.s. obtained in Table 1  we readily get
\eqref{sumasdim}.
\end{proof}
We finally prove that those treated in the previous proposition are the only  instances in which the kernel of the Dirac operator is the whole space $X=L(\L)\otimes F^{\sigma}(\p)$.
We shall freely use the language of vertex algebras. All the necessary information can be found in \cite{KMP}.
\begin{prop}\label{wholeker}  $D$ vanishes identically on $X$ if and only if $\L=0$ and $\g=\aa\oplus\p$ is the Cartan decomposition of an involution of $\g$.
\end{prop}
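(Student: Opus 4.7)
The sufficiency is direct. When $\L=0$ the trivial module $L(0)=\C$ has all affine currents of $\g$ acting as zero. By the explicit formula for $D$ in \cite[Lemma 4.5]{KMP}, $D=G_0$ is the zero mode of a superconformal field $G(z)=G^{\mathrm{cur}}(z)+G^{\mathrm{cub}}(z)$, where $G^{\mathrm{cur}}$ is linear in the $\p$-currents and $G^{\mathrm{cub}}$ is cubic in the $\p$-fermions with coefficients $(X_\alpha,[X_\beta,X_\gamma])$ for $X_\alpha,X_\beta,X_\gamma\in\p$. The first piece kills $L(0)\otimes F^\s(\p)$ because the currents act trivially on $L(0)$, and the second vanishes because $[\p,\p]\subseteq\aa$ makes the defining $3$-form identically zero; so $D\equiv 0$.

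For the converse, I would work in the vertex-algebra picture. Here $D=G_0$ is the zero mode of the Kac--Todorov superconformal current $G(z)$, whose modes obey $[G_m,G_n]=2L_{m+n}+(\mathrm{central})$ and $[L_m,G_n]=(m/2-n)G_{m+n}$. Assuming $D=0$ on $X$, the first relation (bracketing $G_0$ with $G_{-n}$) gives $L_n|_X=0$ for every $n\ne 0$; feeding this into the second relation (with $L_m=0$ for a suitable $m\ne 0$) yields $G_n|_X=0$ for every $n$. Hence $G(z)$ acts as the zero field on $X$.

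I would then split $G(z)=G^{\mathrm{cur}}(z)+G^{\mathrm{cub}}(z)$ and separate the two pieces by fermion degree. Applying $G(z)$ to $v_\L\otimes|0\rangle$, with $|0\rangle$ the vacuum of $F^\s(\p)$, produces a combination of one-fermion and three-fermion states which must cancel in each fermionic component, so each of $G^{\mathrm{cur}}$ and $G^{\mathrm{cub}}$ vanishes on $X$ separately. The vanishing of $G^{\mathrm{cub}}$ forces $(X_\alpha,[X_\beta,X_\gamma])=0$ on $\p$, that is $[\p,\p]\subseteq\aa$; the linear map equal to $+1$ on $\aa$ and $-1$ on $\p$ is then an involution of $\g$ whose Cartan decomposition is $\g=\aa\oplus\p$.

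Finally, the vanishing of $G^{\mathrm{cur}}(z)=\sum_{I}{:}J^\g_I(z)\psi^I(z){:}$ (with $I$ running over a basis of $\p$) on $X$, read through its singular OPE with a fermion field $\psi^J(w)$ for $J\in\p$, yields that every affine current $J^\g_X$ with $X\in\p$ acts as zero on $L(\L)$; equivalently, the subspace $L(\p,\s)\subset\widehat L(\g,\s)$ annihilates $L(\L)$. Choosing paired root vectors $Y\in\p^{\ov{j}}_\alpha$, $Z\in\p^{\ov{-j}}_{-\alpha}$ with $[Y,Z]=h_\alpha$ proportional to a coroot, the affine bracket $[Y_m,Z_{-m}]v_\L=([Y,Z])_0 v_\L+m(Y,Z)k\,v_\L=0$ then gives $\L(h_\alpha)=-k(Y,Z)$ for every such $\alpha$; additive relations among the $h_\alpha$'s, available because $\g$ is semisimple and $\aa$ is a proper subalgebra when $\p\ne 0$, force $k=0$, and integrability of $L(\L)$ yields $\L=0$. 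The main obstacle will be the clean separation of the current and cubic parts of $G(z)$ once one knows $G(z)\equiv 0$ on $X$; the rest is a matter of affine bracket manipulation combined with the structure theory of symmetric pairs.
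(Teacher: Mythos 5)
Your proof is correct in its essentials and takes a genuinely different route from the paper's. Both arguments start the same way: from $D=G_0=0$, the Ramond relations force $G$ and $L$ to act trivially on $X$. The paper then passes to the central charge and shows $C=0$; it invokes the inequality $C(\g)\geq C(\aa)$ for positive level (from \cite{AGO}) to deduce $k=0$, and recognizes the resulting identity $\tfrac{1}{2}\dim\p=\sum_S(1-g_S/g)\dim\aa_S$ as the hypothesis of the Goddard--Nahm--Olive Symmetric Space Theorem \cite{GNO}, which gives $[\p,\p]\subseteq\aa$. You instead unwind $G(z)(v_\L\otimes|0\rangle)=0$ by fermion degree: the three-fermion component forces the fundamental $3$-form $(X,[Y,Z])$ to vanish on $\p$, giving $[\p,\p]\subseteq\aa$ directly, with no appeal to GNO; the one-fermion component shows the $\p$-currents annihilate $v_\L$ (hence all of $L(\L)$, since $L(\L)=U(\widehat L(\aa,\s))v_\L$ once this is established), and the bracket $[Y_m,Z_{-m}]v_\L=\L([Y,Z])v_\L+m(Y,Z)k\,v_\L=0$, with $m$ allowed to vary, then forces $k=0$; integrability yields $\L=0$. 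This is more self-contained than the paper's proof and yields slightly more structural information (the $\p$-currents annihilate $L(\L)$). One point to tighten: the assertion that $G^{\mathrm{cur}}$ and $G^{\mathrm{cub}}$ ``vanish on $X$ separately'' does not follow formally from their vanishing on the cyclic vector $v_\L\otimes|0\rangle$ (a nonzero element of a vertex algebra can annihilate a cyclic vector without annihilating the whole module); consequently the OPE-with-$\psi^J$ argument, which needs $G^{\mathrm{cur}}(z)\equiv 0$ on all of $X$, should be replaced by direct mode-by-mode analysis of $G^{\mathrm{cur}}_m(v_\L\otimes|0\rangle)=0$, from which one reads off $J^I_p\,v_\L=0$ for all $I$ in a basis of $\p$ and all $p$. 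The remainder of your argument then goes through.
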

\begin{proof} Recall  that
in \cite[Remark 4.1]{KMP} we introduced two fields $G,\,L$, depending on $\g,\aa$, in the super affine vertex algebra of $\g$. The field $G=\sum\limits_{n\in\ganz}G_n z^{-n-\frac{3}{2}}$ is related 
to the Dirac operator $D$ by
$G_0^X=\frac{D}{\sqrt{k+g}}$. Assume that the stated condition holds. By Proposition \cite{KMP}, $G_0^X$ splits as the sum of a quadratic and a cubic term (see \cite[(4.1)]{KMP}). The former 
vanishes in $X$ because $\L=0$, the latter vanishes identically because the pair  $(\g,\aa)$ is a symmetric.
Now we prove the converse statement. 
The operators $G,L$  generate a Ramond algebra, i.e., a Lie conformal superalgebra $\C[T]L+\C[T]G+\C C,$ with
\begin{equation}\label{ramond}[L_\l L]=(T+2\l)L+\frac{\l^3}{12}C,\quad [L_\l 
G]=(T+\frac{3}{2}\l)G,\quad [G_\l G]=2L+\frac{\l^2}{3}C\end{equation}
which acts on $X$ with central charge
\begin{align}\label{cc}
C&=\frac{k\dim(\g)}{k+g}-\sum_S(1-\frac{g_S}{k+g})\dim(\aa_S)+\half\dim(\p)\\\label{forces}
&=C(\g)-C(\aa)+\half\dim(\p),
\end{align} 
where $C(\g)=\frac{k\dim(\g)}{k+g}$ is the conformal anomaly of $\g$ and $C(\aa)$ that of $\aa$: see \cite[(4.26)]{KMP} (the first summand in the r.h.s of \eqref{cc} is erroneously missing in the reference). 
 If $Ker(D)=X$, then 
$G_0$ acts trivially. Since the Ramond algebra is simple (modulo the center), 
$G, L$ act trivially and their vanishing implies   that  $C=0$. It is a general fact that if $k>0$ then $C(\g)\geq C(\aa)$ (see the discussion in Section 2 of \cite{AGO}). Since we are assuming that  $L(\L)$ is an integrable module, we have $k\geq 0$. Hence relation \eqref{forces}Ê forces $k=0$, so that 
$$\frac{\dim(\p)}{2}=\sum_S(1-\frac{g_S}{g})\dim(\aa_S).$$
 In turn, this relation tells us that  the Symmetric Space Theorem 
 \cite{GNO} applies, hence $(\g,\aa)$ is a symmetric pair.

\end{proof}

\providecommand{\bysame}{\leavevmode\hbox to3em{\hrulefill}\thinspace}
\providecommand{\MR}{\relax\ifhmode\unskip\space\fi MR }
% \MRhref is called by the amsart/book/proc definition of \MR.
\providecommand{\MRhref}[2]{%
  \href{http://www.ams.org/mathscinet-getitem?mr=#1}{#2}
}
\providecommand{\href}[2]{#2}

\footnotesize{

\noindent{\bf V.K.}: Department of Mathematics, Rm 2-178, MIT, 77 
Mass. Ave, Cambridge, MA 02139;\\
{\tt kac@math.mit.edu}

\noindent{\bf P.MF.}: Politecnico di Milano, Polo regionale di Como, 
Via Valleggio 11, 22100 Como,
ITALY;\\ {\tt pierluigi.moseneder@polimi.it}

\noindent{\bf P.P.}: Dipartimento di Matematica, Sapienza Universit\`a di Roma, P.le A. Moro 2,
00185, Roma , ITALY;\\ {\tt papi@mat.uniroma1.it} }

\end{document}